\author{Ofir Gorodetsky} \title{A polynomial analogue of Landau's theorem and related problems}
\date{}
\newtheorem*{thm*}{Theorem}
\newtheorem{thm}{Theorem}[section]
\newtheorem{lem}[thm]{Lemma}  
\newtheorem{proposition}[thm]{Proposition}
\newtheorem{cor}[thm]{Corollary}
\theoremstyle{definition}
\newtheorem{remark}[thm]{Remark}
\newcommand{\Mod}[1]{\ \text{mod}\ #1}
\newcommand{\FF}{\mathbb{F}}
\numberwithin{equation}{section}
\begin{document}
\maketitle
\begin{abstract}
Recently, an analogue over $\FF_q[T]$ of Landau's theorem on sums of two squares was considered by Bary-Soroker, Smilansky and Wolf. They counted the number of monic polynomials in $\FF_q[T]$ of degree $n$ of the form $A^2+TB^2$, which we denote by $B(n,q)$. They studied $B(n,q)$ in two limits: fixed $n$ and large $q$; and fixed $q$ and large $n$. We generalize their result to the most general limit $q^n \to \infty$. More precisely, we prove
\begin{equation*}
B(n,q) \sim K_q \cdot \binom{n-\frac{1}{2}}{n} \cdot q^n , \qquad q^n \to \infty,
\end{equation*}
for an explicit constant $K_q = 1+O\left(1/q\right)$. Our methods are different and are based on giving explicit bounds on the coefficients of generating functions. These methods also apply to other problems, related to polynomials with prime factors of even degree.
\end{abstract}

\section{Introduction}
Let $b(n)$ be the characteristic function of integers that are representable as a sum of two squares and let
\begin{equation}
B(x) = \sum_{n \le x} b(n)
\end{equation}
be the number of such integers up to $x$. Landau's theorem \cite{land} gives an asymptotic formula for $B(x)$:
\begin{equation}
B(x) = K\frac{x}{\sqrt{\ln x}} + O \left(\frac{x}{\ln^{3/2} x} \right), \qquad x \to \infty,
\end{equation}
where
\begin{equation}
K = \frac{1}{\sqrt{2}} \prod_{p \equiv 3 \bmod 4}(1-p^{-2})^{-1/2} \approx 0.764
\end{equation}
is the Landau-Ramanujan constant.

A polynomial analogue of the function $B(x)$, which we describe below, was studied by Bary-Soroker, Smilansky and Wolf \cite{lior}. Let $q$ be an odd prime power, and $\FF_q$ denote the field of $q$ elements. Denote by $\mathcal{M}_{n,q}$ the set of monic polynomials in $\FF_q[T]$ of degree $n$, by $\mathcal{M}_{q}=\cup_{n \ge 0} \mathcal{M}_{n,q}$ the set of all monic polynomials in $\FF_q[T]$ and by $\mathcal{P}_{q}$ the set of monic irreducible polynomials in $\FF_q[T]$.

For a polynomial $f \in \mathcal{M}_{n,q}$ we define the characteristic function
\begin{equation}
b_q(f) = \begin{cases} 1, & f=A^2+TB^2 \text{ for }A,B\in \FF_q[T], \\ 0& \text{otherwise}, \end{cases}
\end{equation}
and the counting function
\begin{equation}
B(n,q)=\sum_{f \in \mathcal{M}_{n,q}}b_q(f).
\end{equation}
In \cite[Thms.~1.2 and 1.3]{lior}, the asymptotic behaviour of $B(n,q)$ was studied in two  limits, fixed $n$ and large $q$  and fixed $q$ and large $n$,
\begin{align}\label{eq:bigq}
B(n,q) &= \frac{\binom{2n}{n}}{4^n} \cdot q^n + O_n(q^{n-1}), \qquad q \to \infty,\\
B(n,q) &= \frac{K_q}{\sqrt{\pi}}\cdot 	\frac{q^n}{\sqrt{n}} + O_q\left(\frac{q^n}{n^{3/2}}\right), \qquad n \to \infty,\label{eq:bign}
\end{align}
where
\begin{align}\label{eq:constantlior}
K_q&=(1-q^{-1})^{-\frac{1}{2}}\prod_{P \in \mathcal{P}_q:(P/T)=-1}(1-q^{-2\deg P})^{-\frac{1}{2}} = 1+O\left(\frac{1}{q}\right).
\end{align}
The asymptotic formula \eqref{eq:bigq} is proved using elementary combinatorial considerations, while \eqref{eq:bign} is proved using complex analysis. In a recent preprint, Matei \cite{matei} extended \eqref{eq:bigq} by writing $B(n,q)$ as a polynomial in $q$ and proving that the coefficients have a certain structure and a cohomological interpretation.

The authors of \cite{lior} write ``We do not know of an asymptotic formula for $B(n,q)$ in any more general sub-limits
of $q^n \to \infty$''. We prove an asymptotic formula in the most general limit $q^n \to \infty$.
\begin{thm}\label{landauthm}
\begin{equation}
B(n,q) =K_q \cdot \binom{n-\frac{1}{2}}{n} \cdot q^n +O\left( \frac{q^{n-1}}{n^{3/2}} \right), \qquad q^n \to \infty,
\end{equation}
and the implied constant is absolute.
\end{thm}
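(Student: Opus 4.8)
The plan is to package $B(n,q)$ into the generating function $\mathcal{B}(u):=\sum_{n\ge 0}B(n,q)u^n$ and to read off its coefficients with control that is uniform in $q$ and $n$ simultaneously. By the criterion of \cite{lior} (obtained by recognising $A^2+TB^2$ as the norm form of $\FF_q(\sqrt{-T})/\FF_q(T)$), one has $b_q(f)=1$ if and only if $v_P(f)$ is even for every monic irreducible $P$ with $(\frac PT)=-1$; consequently
\[
\mathcal{B}(u)=\frac{1}{1-u}\prod_{(\frac PT)=1}\frac{1}{1-u^{\deg P}}\prod_{(\frac PT)=-1}\frac{1}{1-u^{2\deg P}}.
\]
I would then record that the $L$-function $L(u,\chi)=\prod_{P}(1-\chi(P)u^{\deg P})^{-1}$ of the completely multiplicative character $\chi(f)=(\frac fT)=(\frac{f(0)}{\FF_q})$ (the Legendre symbol in $\FF_q$, equal to $0$ when $f(0)=0$) is identically $1$: expanding $L(u,\chi)=\sum_{f\in\mathcal{M}_q}\chi(f)u^{\deg f}$ and noting that for $n\ge 1$ the constant coefficient of $f\in\mathcal{M}_{n,q}$ is unconstrained, $\sum_{\deg f=n}\chi(f)=q^{n-1}\sum_{a\in\FF_q^\times}(\frac a{\FF_q})=0$. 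Using $L(u,\chi)=1$ to trade $\prod_{(\frac PT)=-1}(1+u^{\deg P})^{-1}$ for $\prod_{(\frac PT)=1}(1-u^{\deg P})$, and rearranging against $Z(u)=(1-qu)^{-1}=\prod_P(1-u^{\deg P})^{-1}$, gives the identity
\[
\mathcal{B}(u)=(1-qu)^{-1/2}\,C_q(u),\qquad C_q(u):=(1-u)^{-1/2}\prod_{(\frac PT)=-1}\bigl(1-u^{2\deg P}\bigr)^{-1/2}.
\]
The series $C_q(u)=\sum_{j\ge 0}c_ju^j$ has $c_j\ge 0$, converges on $|u|<q^{-1/2}$ (which contains $u=1/q$ in its interior, as $q\ge 3$), and satisfies $C_q(1/q)=K_q$.

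Next one extracts coefficients. Put $\beta_m:=\binom{m-1/2}{m}=[u^m](1-u)^{-1/2}$, so that $B(n,q)=q^n\sum_{j=0}^nc_jq^{-j}\beta_{n-j}$ while $K_q\binom{n-1/2}{n}q^n=\beta_nq^n\sum_{j\ge 0}c_jq^{-j}$. Writing $C_q(u)-K_q=(1-qu)\phi(u)$, where $\phi$ is holomorphic wherever $C_q$ is and $[u^i]\phi=-q^i\sum_{j>i}c_jq^{-j}$, one gets $\mathcal{B}(u)-K_q(1-qu)^{-1/2}=(1-qu)^{1/2}\phi(u)$, whence
\[
\Bigl|B(n,q)-K_q\binom{n-1/2}{n}q^n\Bigr|\ \le\ q^n\sum_{i=0}^n\Bigl(\sum_{j>i}c_jq^{-j}\Bigr)\Bigl|\binom{1/2}{n-i}\Bigr|.
\]

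The whole estimate then reduces to two inputs. The first is the \emph{absolute} coefficient bound $c_j\le q^{\lfloor j/2\rfloor}$: coefficientwise $\prod_{(\frac PT)=-1}(1-u^{2\deg P})^{-1/2}\le\prod_P(1-u^{2\deg P})^{-1/2}=(1-qu^2)^{-1/2}$, so $c_j\le[u^j]\bigl((1-u)^{-1/2}(1-qu^2)^{-1/2}\bigr)=\sum_{a+2b=j}\beta_a\beta_bq^b\le q^{\lfloor j/2\rfloor}\sum_{a+2b=j}\beta_a\beta_b$, and $\sum_{a+2b=j}\beta_a\beta_b=[u^j]\bigl((1-u)^{-1}(1+u)^{-1/2}\bigr)=\sum_{i\le j}(-1)^i\beta_i\in[1/2,1]$ because $(\beta_i)_{i\ge0}$ is positive and decreases to $0$. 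This yields $c_jq^{-j}\le q^{-\lceil j/2\rceil}$ and hence $\sum_{j>m}c_jq^{-j}\le 2q^{-\lceil(m+1)/2\rceil}$ for $q\ge 3$. The second input is $\bigl|\binom{1/2}{m}\bigr|=\beta_m/(2m-1)\le m^{-3/2}$ for $m\ge1$. Feeding both into the displayed inequality and splitting the sum over $m:=n-i$ at $m=n/2$ — the terms with $m\asymp n/2$, where $(n-m)^{-3/2}\asymp n^{-3/2}$, dominate, and every other term carries an extra factor $q^{-\Theta(n)}$ — a short computation shows the right-hand side is $\ll q^{n-1}n^{-3/2}$ with an absolute implied constant for every $n\ge 1$ (a handful of small $n$ can be checked directly, or via \eqref{eq:bigq} together with $K_q=1+O(q^{-1})$ and $\binom{n-1/2}{n}=\binom{2n}{n}4^{-n}$).

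I expect the main difficulty to be exactly this two-sided uniformity of the error $q^{n-1}n^{-3/2}$. The gain of $q^{-1}$ over the trivial $q^n$ is precisely the content of $c_j\le q^{\lfloor j/2\rfloor}$ — equivalently, that $C_q$ is $(1-qu^2)^{-1/2}$ times a factor whose Taylor coefficients are bounded by an absolute constant, a clean way of recording that the Euler product over the ``inert'' primes $(\frac PT)=-1$ already converges at $s=2$ — while the extra $n^{-1}$ beyond the main term's $n^{-1/2}$ is carried by the decay $\bigl|\binom{1/2}{m}\bigr|\ll m^{-3/2}$ of the coefficients of $(1-qu)^{1/2}$. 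Everything else (the $L$-function computation, the algebraic reshuffling producing $C_q$, and the dyadic bookkeeping) should be routine.
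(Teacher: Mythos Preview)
Your approach is correct and reaches the same conclusion by a genuinely different route. Both you and the paper start from the same Euler-product factorisation $\mathcal{B}(u)=(1-qu)^{-1/2}C_q(u)$ with $C_q(1/q)=K_q$ (the paper derives it via the logarithmic derivative and the explicit formula for $\psi_{G_q}(n)$, you via the identity $L(u,\chi)=1$). Where you diverge is in the coefficient extraction: the paper proves a general ``Darboux with explicit constants'' theorem (its Theorem~3.3) for products $a(x)(1-x/\beta)^{-c_1}$ with $a$ in exponential form, applies it with $m=1$ to get a two-term expansion, and then bounds the second term $a'(q^{-1})/q(2n-1)$ by $O(1/(qn))$. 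You instead observe directly that $\mathcal{B}(u)-K_q(1-qu)^{-1/2}=(1-qu)^{1/2}\phi(u)$, which trades the $n^{-1/2}$ decay of $(1-qu)^{-1/2}$ for the $n^{-3/2}$ decay of $(1-qu)^{1/2}$ in one stroke, and then bound the convolution using the clean majorant $c_j\le q^{\lfloor j/2\rfloor}$. Your argument is shorter and more elementary for this particular $c_1=\tfrac12$; the paper's approach pays for itself because the same Theorem~3.3 handles all the other applications (arbitrary $c_1\in(0,1)$, higher-genus function fields, arithmetic progressions).

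One point to tidy up: your description of the final sum is garbled. With $m=n-i$ the sum is $\sum_{m} T_{n-m}\bigl|\binom{1/2}{m}\bigr|$, and the dominant contribution comes from $m$ close to $n$ (equivalently $i$ small), where $\bigl|\binom{1/2}{m}\bigr|\asymp n^{-3/2}$ and $\sum_{k\ge0}T_k\ll q^{-1}$; it is the terms with $m$ \emph{small} that carry the extra $q^{-\Theta(n)}$, not the other way round, and ``$(n-m)^{-3/2}$'' should read ``$m^{-3/2}$''. Concretely, writing $T_i\le 3q^{-1}q^{-\lfloor i/2\rfloor}$ and splitting $\sum_{i}q^{-\lfloor i/2\rfloor}\bigl|\binom{1/2}{n-i}\bigr|$ at $i=n/2$ gives $O(n^{-3/2})$ from the small-$i$ range and $O(q^{-n/2})$ from the large-$i$ range (the latter is $\le n^{-3/2}$ for all $n\ge1$, $q\ge3$), so the total error is $\ll q^{n-1}n^{-3/2}$ as claimed. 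Once you straighten out this bookkeeping the proof is complete.
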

We also investigate the constant $K_q$, which turns out to be an analytic function of $q^{-1}$.
\begin{remark}
To see that Theorem \ref{landauthm} is consistent with \eqref{eq:bigq} and \eqref{eq:bign}, note that 
\begin{equation}
\binom{n-\frac{1}{2}}{n} = \frac{\binom{2n}{n}}{4^n} \sim \frac{1}{\sqrt{\pi n}}, \qquad n \to \infty,
\end{equation}
and that by \eqref{eq:constantlior}, $K_q=1+O\left( 1/q \right)$.
\end{remark}
Our methods differ from those used in \cite{lior}, and are based on explicit estimates for the coefficients of a certain class of generating functions. Another advantage of our methods, is that they apply to other problems, see \S \ref{furthapp}.

\section{Further applications}\label{furthapp}
\subsection{Prime factors of even degree}
Suppose we want to count ``ordinary'' sums of two squares in $\mathbb{F}_q[T]$, i.e. elements of the form $A^2+B^2$. If $q \equiv 1 \bmod 4$, then $\sqrt{-1} \in \FF_q$. Given $f \in \FF_q[T]$, we have $f=((f-1)/(2\sqrt{-1}))^2 + ((f+1)/2)^2$, and so every polynomial is of the form $A^2+B^2$. If $q \equiv 3 \mod 4$, Leahey \cite{leahey} has shown that the polynomials of the form $A^2+B^2$ are those whose prime factors of odd degree appear only with even multiplicity. Imitating Leahey's proof, one sees that if $q$ is an odd prime power and 
\begin{equation}
\alpha \in \FF_q^{\times} \setminus (\FF_q^{\times})^2,
\end{equation}
then the monic polynomials of the form $A^2-\alpha  B^2$ ($A,B \in \FF_q[T]$) are also characterised by the property that their prime factors of odd degree appear with even multiplicity.
This raises the problem of estimating the number of monic polynomials of a given degree in the following set, which makes sense for any prime power $q$:
\begin{equation}\label{s1def}
S_1(q) = \{ f \in \mathcal{M}_{q} \, : P \mid f \text{ and } 2\nmid \deg P \implies 2 \mid v_P(f) \},
\end{equation}
where $v_P(f)$ is the multiplicity of $P$ in $f$. Let
\begin{equation}\label{b1def}
B_1(2n,q) = \# \{ f \in S_{1}(q) : \deg f = 2n \}.
\end{equation}
Chuang, Kuan and Yu \cite{3proc}
considered two natural variations on $S_1(q)$. The first is the subset of $S_1(q)$ of polynomials with no odd-degree prime factors:
\begin{equation}
\begin{split}
S_2(q) &= \{ f \in \mathcal{M}_{q} \, : P \mid f \implies 2 \mid \deg P \}, \\
B_2(2n,q) &= \# \{ f \in S_{2}(q) : \deg f = 2n \}.  
\end{split}
\end{equation}
The second is the subset of squarefree polynomials in $S_2(q)$:
\begin{equation}\label{b3count}
\begin{split}
S_3(q) &= \{ f \in \mathcal{M}_{q} \, : P \mid f \implies 2 \mid \deg P \text{ and } P^2 \nmid f \}, \\
B_3(2n,q) &= \# \{ f \in S_{3}(q) \,: \deg f = 2n \}.  
\end{split}
\end{equation}
The motivation for studying $S_2(q)$, $S_3(q)$ is the following. Assume that $q$ is odd. As observed by Artin \cite{artin} in his study of quadratic extensions of $\FF_q(T)$, the analogue over $\FF_q[T]$ of a fundamental discriminant is a squarefree monic polynomial $D$. The negative Pell equation asks for the solubility of
\begin{equation}\label{negpell}
X^2-DY^2=\gamma_q\text{ with }X,Y \in \FF_q[T],
\end{equation}
where $\gamma_q$ is a generator of $\FF_q^{\times}$. By considering \eqref{negpell} modulo a prime factor $P$ of $D$, we find
\begin{equation}\label{quadrec}
\left( \frac{\gamma_q}{P} \right) = 1,
\end{equation}
where $\left(\frac{\bullet}{P} \right)$ is the Legendre symbol modulo $P$. By quadratic reciprocity, equation \eqref{quadrec} implies that $P$ has even degree. Thus, the negative Pell equation has no solution for a given fundamental discriminant $D$ of degree $2N$ unless $D$ is among those $B_3(2N,q)$ polynomials counted in \eqref{b3count}. 

The problem of estimating, in the limit $n \to \infty$, the proportion in $B_3(2n,q)$ of the discriminants $D$ for which \eqref{negpell} is soluble, is an open problem studied by Bae and Jung \cite[Thm.~1.5]{bae}. Their work is motivated by the number field setting, studied by Stevenhagen \cite[Conj.~1.2]{steven} and Fouvry and Kl\"uners \cite[Thm.~1]{fouvry}.

The asymptotics of $B_2(2n,q)$, $B_3(2n,q)$ in the limit $n \to \infty$ are given in \cite[Thms.~1 and 2]{3proc}:
\begin{equation}\label{3procfirst}
\begin{split}
B_2(2n,q) &= C_{q,2} \frac{q^{2n}}{\sqrt{\pi n}} +O_q\left(\frac{q^{2n}}{n^{3/2}}\right), \qquad n \to \infty, \\
B_3(2n,q) &= C_{q,3} \frac{q^{2n}}{\sqrt{\pi n}} +O_q\left(\frac{q^{2n}}{n^{3/2}}\right), \qquad n \to \infty,
\end{split}
\end{equation}
where $C_{q,2}$ and $C_{q,3}$ are positive constants, given explicitly in \cite[Thms.~1 and 2]{3proc} as infinite products.

We modify the main term of \eqref{3procfirst} to obtain an asymptotic formula in the general limit $q^n \to \infty$.
\begin{thm}\label{thmchinintimprovquad}
For any positive integer $n$,
\begin{align}\label{eq:b1est}
B_1(2n,q) &= C_{q,1} \cdot \binom{n-\frac{1}{2}}{n} \cdot q^{2n} +O\left( \frac{q^{2n-1}}{n^{3/2}} \right), \qquad q^n \to \infty,\\
B_2(2n,q)&=  C_{q,2} \cdot \binom{n-\frac{1}{2}}{n} \cdot q^{2n} +O\left( \frac{q^{2n-1}}{n^{3/2}} \right), \qquad q^n \to \infty, \label{eq:b2est}\\
B_3(2n,q)&= C_{q,3} \cdot \binom{n-\frac{1}{2}}{n} \cdot q^{2n} +O\left( \frac{q^{2n-1}}{n^{3/2}} \right), \qquad q^n \to \infty,,\label{eq:b3est}
\end{align}
where the implied constants are absolute. Moreover, $C_{q,i} = 1 + O\left( 1/q \right)$.
\end{thm}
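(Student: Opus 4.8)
The plan is to treat all three counts $B_1$, $B_2$, $B_3$ uniformly via their generating functions. For $i=1,2,3$ set $\mathcal{B}_i(u) = \sum_{f \in S_i(q)} u^{\deg f}$; since membership in $S_i(q)$ is multiplicative in the prime factorization, each $\mathcal{B}_i$ factors as an Euler product over $\mathcal{P}_q$. Concretely, writing $d_k$ for the number of monic irreducibles of degree $k$ and $d_k^{\mathrm{odd}}$, $d_k^{\mathrm{even}}$ for the number of those of odd/even degree, one gets
\begin{align*}
\mathcal{B}_2(u) &= \prod_{k \text{ even}} (1-u^k)^{-d_k}, \\
\mathcal{B}_3(u) &= \prod_{k \text{ even}} (1+u^k)^{d_k}, \\
\mathcal{B}_1(u) &= \prod_{k \text{ even}} (1-u^k)^{-d_k} \cdot \prod_{k \text{ odd}} (1-u^{2k})^{-d_k}.
\end{align*}
The first step is to isolate the dominant singular factor. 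Using $\prod_{k}(1-u^k)^{-d_k} = (1-qu)^{-1}$ (the zeta function of $\FF_q[T]$) and $\prod_k (1-u^{2k})^{-d_k} = (1-qu^2)^{-1}$, one checks that each $\mathcal{B}_i(u)$ equals $(1-qu^2)^{-1/2}$ times a factor $H_i(u)$ that is analytic — and, crucially, with controlled coefficients — in a disc of radius strictly larger than $q^{-1/2}$. For instance $\mathcal{B}_2(u) = (1-qu^2)^{-1/2}\cdot H_2(u)$ with $H_2(u) = (1-qu^2)^{1/2}\prod_{k\text{ even}}(1-u^k)^{-d_k}$; grouping $k=2$ with the $(1-qu^2)^{1/2}$ and using $d_2 = (q^2-q)/2$ shows $H_2$ extends analytically past $|u| = q^{-1/2}$, and similarly for $H_1, H_3$. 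The constant $C_{q,i}$ will come out as $H_i(q^{-1/2})$, and the estimate $C_{q,i} = 1 + O(1/q)$ follows by expanding the Euler product for $H_i$ and bounding the tail with the prime polynomial theorem $d_k = q^k/k + O(q^{k/2}/k)$.

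The second step is the coefficient extraction. Since $[u^{2n}](1-qu^2)^{-1/2} = \binom{n-1/2}{n} q^n$, and since $\binom{n-1/2}{n} q^{2n}$ is exactly what the $q^{2n}$-weighted version contributes, I would write $B_i(2n,q) = [u^{2n}]\big((1-qu^2)^{-1/2} H_i(u)\big)$ and split $H_i(u) = H_i(q^{-1/2}) + \big(H_i(u) - H_i(q^{-1/2})\big)$, or more efficiently expand $H_i(u) = \sum_m c_m^{(i)} u^m$ and use the convolution
\[
[u^{2n}]\big((1-qu^2)^{-1/2}H_i(u)\big) = \sum_{2j \le 2n} c_{2n-2j}^{(i)} \cdot \text{(odd $m$ terms vanish)} \cdots
\]
— here I'd use that only even powers of $u$ appear in $\mathcal{B}_2, \mathcal{B}_3$, and in $\mathcal{B}_1$ as well, so $H_i$ is even. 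The point is that the main term is $H_i(q^{-1/2})\binom{n-1/2}{n}q^n$ and the error is governed by $\sum_{j<n} |c_{2n-2j}^{(i)}| \binom{j-1/2}{j} q^j + \binom{n-1/2}{n}q^n \sum_{m > \text{large}} |c_m^{(i)}| q^{-m/2}$-type tails; plugging in a geometric bound $|c_m^{(i)}| \ll \rho^{-m}$ for some $\rho > q^{-1/2}$ and the asymptotic $\binom{n-1/2}{n} \asymp n^{-1/2}$ yields the claimed $O(q^{2n-1}n^{-3/2})$ after one is careful that the saved factor is genuinely $1/q$ and not just $o(1)$.

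The key technical input — and I expect this to be the main obstacle — is making the analyticity radius and coefficient bounds for $H_i(u)$ \emph{uniform in $q$}, with the error savings being a clean power of $1/q$ rather than something that degrades as $q \to \infty$. This is exactly the kind of "explicit bounds on coefficients of generating functions" the introduction advertises, so presumably a general lemma has already been set up (in a part of the paper not shown) that takes an Euler product of the shape $\prod_k (1 \pm u^k)^{\pm d_k}$, identifies its dominant singularity, and outputs both the value of the analytic remainder at the singularity up to $O(1/q)$ and a geometric decay rate for its Taylor coefficients with $q$-uniform implied constants. Granting that machinery, the proof of Theorem \ref{thmchinintimprovquad} is then essentially a matter of: (i) writing down the three Euler products; (ii) factoring out $(1-qu^2)^{-1/2}$ in each case and verifying the hypotheses of the general lemma for the remainders $H_1, H_3$ (the $\mathcal{B}_1$ case being slightly more elaborate because of the extra $\prod_{k\text{ odd}}(1-u^{2k})^{-d_k}$ factor, which itself needs its $k$-th term peeled off); (iii) invoking the lemma to get the main term $C_{q,i}\binom{n-1/2}{n}q^{2n}$ with error $O(q^{2n-1}n^{-3/2})$; and (iv) reconciling $C_{q,i} = H_i(q^{-1/2})$ with the infinite-product expressions of \cite{3proc} and deriving $C_{q,i} = 1 + O(1/q)$, which is the same computation that appeared for $K_q$ in Theorem \ref{landauthm}. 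The subtlety to watch throughout is the interplay between the two regimes hidden inside $q^n \to \infty$: when $q$ is large the error is dominated by the $q^{n-1}$ part and when $n$ is large by the $n^{-3/2}$ decay, and the single bound $O(q^{2n-1}n^{-3/2})$ has to absorb both, which forces the coefficient bounds on $H_i$ to be simultaneously geometric in $m$ and of size $O(1/q)$ past the constant term.
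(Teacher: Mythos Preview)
Your overall plan matches the paper's --- Euler products, peel off the dominant singular factor, apply a general coefficient lemma --- but you have mis-identified the singularity. Since $B_i(2n,q)\asymp q^{2n}/\sqrt{n}$, the series $\mathcal{B}_i(u)$ has radius of convergence $q^{-1}$, not $q^{-1/2}$; the factor to extract is $(1-q^2u^2)^{-1/2}$, whose $u^{2n}$-coefficient is $\binom{n-1/2}{n}q^{2n}$, not $(1-qu^2)^{-1/2}$. With your choice the remainder $H_i$ is still singular at $|u|=q^{-1}$ (your check ``grouping $k=2$\dots'' does not work: $(1-qu^2)^{1/2}$ and $(1-u^2)^{-d_2}$ have branch points at $\pm q^{-1/2}$ and $\pm 1$ respectively, which do not cancel), and the main term comes out off by a factor $q^n$, as you half-noticed.

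The paper sidesteps this by working from the start in the variable $x$ with $B_i(2n,q)=[x^n]F_{S_i(q)}(x)$, so that the dominant factor is $(1-q^2x)^{-1/2}$. More to the point, rather than bounding Taylor coefficients of the remainder directly, the paper writes it as $\exp\bigl(\sum_{n\ge1}f_n x^n/n\bigr)$ and evaluates $\psi_{S_1(q)}(n)=\sum_{d\mid n}d\cdot h(d)$ in closed form using $\sum_{d\mid n}d\,\pi_q(d)=q^n$, obtaining $\psi_{S_1(q)}(n)=\tfrac12 q^{2n}+\tfrac12 q^{n/2^{v_2(n)}}$ and hence $|f_n|\le\tfrac12 q^n$ exactly. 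This is precisely the hypothesis of Theorem~\ref{thm1} with $\alpha=q^{-1}$, $\beta=q^{-2}$, $c_1=c_2=\tfrac12$; applying it with $m=0$ gives relative error $O(1/(qn))$, i.e.\ absolute error $O(q^{2n-1}/n^{3/2})$, and $C_{q,1}=1+O(1/q)$ drops out of Remark~\ref{sizeabeta}. The exact computation of the log-coefficients $f_n$ is what delivers the $q$-uniformity painlessly; your route via direct geometric bounds on the Taylor coefficients of $H_i$ could be made to work once the singularity is corrected, but would re-derive by hand what the exponential form and Theorem~\ref{thm1} already package.
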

\subsection{Higher genus}
We consider a higher genus analogue of the sets $S_1(q)$, $S_2(q)$ and $S_3(q)$. Consider a function field $K$ with finite constant field $\FF_q$, i.e. $K$ is a finitely generated field extension of transcendence degree one over $\FF_q$ and $\FF_q$ is algebraically closed in $K$. Let $\mathfrak{Q}$ be a fixed place of $K$. Let  $\mathbb{P}_K$ be the set of places of $K$.

For each place $\mathfrak{P}$ of $K$, the corresponding valuation $K \to \mathbb{Z}\cup \{ \infty \}$ is denoted by $v_{\mathfrak{P}}$. The residue field at $\mathfrak{P}$ is denoted by $\FF_{\mathfrak{P}}$. The degree $\deg_K {\mathfrak{P}}$ of $\mathfrak{P}$ is $[\FF_{\mathfrak{P}} : \FF_q]$. Let $\text{Div}(K)$ be the divisor group of $K$. Every element $D$ in $\text{Div}(K)$ may be written uniquely as
\begin{equation}
D = \sum_{\mathfrak{P} \in \mathbb{P}_K} v_{\mathfrak{P}}(D) \cdot \mathfrak{P} .
\end{equation}
Given $D \in \text{Div}(K)$, we define the degree of $D$ as
\begin{equation}
\deg D = \sum_{\mathfrak{P} \in \mathbb{P}_K}  v_{\mathfrak{P}}(D) \cdot \deg_K \mathfrak{P}.
\end{equation}
The set
\begin{equation}
\text{Div}_{0,\mathfrak{Q}}(K) = \{ D \in \text{Div}(k) : \deg D=0,\quad v_{\mathfrak{P}}(D) < 0 \implies \mathfrak{P}=\mathfrak{Q} \} 
\end{equation}
is an higher genus analogue of the set $\mathcal{M}_q \subseteq \FF_q[T]$: If $K=\FF_q(T)$ and $\mathfrak{Q}$ is the prime at $1/T$, then the map $\mathcal{M}_q \to \text{Div}_{0,\mathfrak{Q}}(K)$ given by $f \mapsto \text{div}(f)$ is a bijection. Similarly, the function $\deg_{\mathfrak{Q}}: \text{Div}_{0,\mathfrak{Q}}(K) \to \mathbb{Z}$ given by
\begin{equation}\label{degqdef}
\deg_{\mathfrak{Q}}\bigg(\sum_{\mathfrak{P}} n_{\mathfrak{P}} \cdot \mathfrak{P}\bigg) = \sum_{\mathfrak{P}:n_{\mathfrak{P}} < 0} (-n_{\mathfrak{P}}) \cdot \deg \mathfrak{P} = -n_{\mathfrak{Q}} \cdot \deg \mathfrak{Q}
\end{equation}
is an higher genus analogue of a degree of a polynomial.

Given positive integers $n \ge 1$, $\ell \ge 1$, $r\ge 2$, we consider the following sets of divisors, which are analogues of $S_1(q)$, $S_2(q)$ and $S_3(q)$:
\begin{align}
S_1(r,K)&=\{D \in \text{Div}_{0,\mathfrak{Q}}(K)\,:  v_{\mathfrak{P}}(D) >0 \implies \frac{r}{\gcd(r, \deg_K \mathfrak{P})} \mid v_{\mathfrak{P}}(D) \},\\
S_2(r,K)&=\{D \in \text{Div}_{0,\mathfrak{Q}}(K)\,:  v_{\mathfrak{P}}(D) >0 \implies r \mid \deg_K \mathfrak{P} \},\\
S_3(r,\ell,  K)&=\{D \in \text{Div}_{0,\mathfrak{Q}}(K)\,:  v_{\mathfrak{P}}(D) >0 \implies r \mid \deg_K \mathfrak{P} \text{ and }v_{\mathfrak{P}}(D)  \le \ell \}.
\end{align}
We elaborate on the algebraic meaning of the sets $S_1$, $S_2$, $S_3$. Let $K_r$ be the fixed constant field extension over $K$ of degree $r$. We have the following relation between primes of $K_r$ and $K$.
\begin{proposition}\cite[Prop.~8.13]{rosen}\label{krsplit}
Let $\mathfrak{P}$ be a prime of $K$. Then $\mathfrak{P}$ splits into $\gcd(r,\deg_K \mathfrak{P} )$ primes in $K_r$. Let $\mathfrak{P}_r$ be a prime of $K_r$ lying over $\mathfrak{P}$. Denote by $f(\mathfrak{P}_r / \mathfrak{P})$ the relative degree of $\mathfrak{P}_r$  over $\mathfrak{P}$. Then
\begin{equation}
\deg_{K_r} \mathfrak{P}_r = \frac{\deg_{K} \mathfrak{P}}{\gcd(r, \deg_{K} \mathfrak{P})}, \qquad f(\mathfrak{P}_r / \mathfrak{P})=\frac{r}{	\gcd(r, \deg_{K} \mathfrak{P})}.
\end{equation}
\end{proposition}
In particular, primes of $K$ of degree divisible by $r$ are exactly those that split completely in $K_r$, which gives an algebraic meaning to $S_2$ and $S_3$. We may define the (linear) norm map \cite[Chap.~IV.7]{chev}
\begin{equation}
N_{K_r/K}: \text{Div}(K_r) \to \text{Div}(K)
\end{equation}
by 
\begin{equation}
\mathfrak{P}_r \mapsto f(\mathfrak{P}_r / \mathfrak{P}) \cdot \mathfrak{P},
\end{equation}
where $\mathfrak{P}_r$ is a prime of $K_r$ lying over $\mathfrak{P}$, a prime of $K$.
By Proposition \ref{krsplit}, the image of $N_{K_r/K}$ is spanned by 
\begin{equation}
\left\{ \frac{r}{\gcd(r,\deg_K \mathfrak{P})}  \mathfrak{P} :  \mathfrak{P} \in \mathbb{P}_K \right\}.
\end{equation}
Intersecting the image $N_{K_r/K}(K_r)$ with $\text{Div}_{0,\mathfrak{Q}}(K)$, we recover the set $S_1(r,K)$. When $r=2, K = \FF_q(T)$ and $\mathfrak{Q}$ is the prime at $1/T$, we obtain $S_1(r,K) = S_1(q)$. Hence, counting divisors of $S_1(r,K)$ is a generalization of Landau's problem over $\FF_q[T]$. The asymptotics of
\begin{equation}
B_i(rn,r,K) = \# \{ D \in S_i(r,K)\,:  \deg_{\mathfrak{Q}} (D) = rn \} \quad (i=1,2)
\end{equation}
and
\begin{equation}
B_3(rn,r,\ell,K) = \# \{ D \in S_3(r,\ell,K) \,:  \deg_{\mathfrak{Q}} (D) = rn \}
\end{equation}
in a very general range of parameters are given in the following theorem.
\begin{thm}\label{thmchinintimprov}
Let $K$ be a function field with constant field $\FF_q$. Fix a prime $\mathfrak{Q}$ of $K$. Let $g_K$ be the genus of $K$, and define 
\begin{equation}
M_{K,\mathfrak{Q}} = \max\{ g_K, \deg \mathfrak{Q} \}.
\end{equation}
Let $n \ge 1$, $\ell \ge 1$, $r \ge 2$ be given integers. If $\deg_{K} \mathfrak{Q} \nmid rn$, we have
\begin{equation}
B_1(rn,r,K)=B_2(rn,r,K)=B_3(rn,r,\ell,K)=0.
\end{equation}
Otherwise, we have
\begin{align}
B_1(rn,r,K)&= \binom{n+\frac{1}{r}-1}{n} \cdot q^{rn} \cdot \left( C_{1,r,K}  + O\left(\frac{\exp(150 \frac{M_{K,\mathfrak{Q}}}{rq^{r/2}}) \frac{M_{K,\mathfrak{Q}}}{rq^{r/2}}}{n}\right) \right),\label{eq:b11est}\\
B_2(rn,r,K)&= \binom{n+\frac{1}{r}-1}{n} \cdot q^{rn} \cdot \left( C_{2,r,K}  + O\left(\frac{\exp(150 \frac{M_{K,\mathfrak{Q}}}{rq^{r/2}}) \frac{M_{K,\mathfrak{Q}}}{rq^{r/2}}}{n}\right) \right),\label{eq:b22est}\\
B_3(rn,r,\ell,K) &= \binom{n+\frac{1}{r}-1}{n} \cdot q^{rn}\cdot \left( C_{r,\ell,K}  + O\left(\frac{\exp(150 \frac{M_{K,\mathfrak{Q}}}{rq^{r/2}}) \frac{M_{K,\mathfrak{Q}}}{rq^{r/2}}}{n}\right) \right),\label{eq:b33est}
\end{align}
as long as 
\begin{equation}\label{rangeofexp}
n \gg 1 + \frac{M_{K,\mathfrak{Q}}}{r^2\ln(q)} \ln \left(\frac{M_{K,\mathfrak{Q}}}{r^2\ln(q)} + 1\right).
\end{equation}
In addition, the constants $C_{1,r,K},C_{2,r,K}, C_{r,\ell,K}$ may be estimated by
\begin{equation}
C_{1,r,K},C_{2,r,K},C_{r,\ell,K} = \exp\left( O\left( \frac{M_{K,\mathfrak{Q}}}{rq^{r/2}} \right) \right).
\end{equation}
\end{thm}
A variant of the functions $B_2(rn,r,K)$, $B_3(rn,r,\ell,K)$, where a prime $\mathfrak{Q}$ was not included in the definition, was studied in \cite[Thm.~4]{3proc}. The results obtained there are meaningful only in the limit $n \to \infty$.

We also investigate the constants $C_{1,r,K}$, $C_{2,r,K}$, $C_{r,\ell,K}$ and provide an expression for them, involving an infinite product over primes of $\mathbb{P}_K$.
\begin{remark}
In the limit $n \to \infty$, the expression $\binom{n+\frac{1}{r}-1}{n}$ appearing in Theorem \ref{thmchinintimprov} may be replaced with $\frac{n^{\frac{1}{r}-1}}{\Gamma(\frac{1}{r})} \left( 1 + O \left( \frac{1}{rn} \right) \right)$: by \cite[Eq.~14]{gamma},  
\begin{equation}
\begin{split}
\binom{n+\frac{1}{r}-1}{n} 
&\le \frac{n^{\frac{1}{r}-1}}{\Gamma(\frac{1}{r})}, \quad \mbox{and}\\
\binom{n+\frac{1}{r}-1}{n} 
&\ge \frac{(n+\frac{1}{r})^{\frac{1}{r}-1}}{\Gamma(\frac{1}{r})} = \frac{n^{\frac{1}{r}-1}}{\Gamma(\frac{1}{r})} \left(1+\frac{1}{rn}\right)^{\frac{1}{r}-1}\\
& \ge \frac{n^{\frac{1}{r}-1}}{\Gamma(\frac{1}{r})} \left(1+\frac{1}{rn}\right)^{-1} \ge \frac{n^{\frac{1}{r}-1}}{\Gamma(\frac{1}{r})} \left(1-\frac{1}{rn}\right).
\end{split}
\end{equation}
\end{remark}

\subsection{Prime factors in an arithmetic progression}
Let $a,m \in \FF_q[T]$ be a pair of coprime polynomials with $m$ monic of positive degree. Let $G_{a,m} \subseteq \FF_q[T]$ be the set of monic polynomials whose monic prime factors are in the arithmetic progression $a(T) \bmod m(T)$. Let 
\begin{equation}\label{snamdef}
S(n; a, m) = \# \{ f \in G_{a,m} \,: \deg (f) = n \},
\end{equation}
be the number of polynomials of degree $n$ in the semigroup $G_{a,m}$. What bounds are satisfied by $S(n;a,m)$?

Although studying $S(n;a,m)$ is interesting for its own sake, the case $a=1$ is an important ingredient in the proof of the following theorem of Thorne \cite[Thm.~1.2]{thorne}, which is a function field analogue of a theorem of Shiu \cite[Thm.~1]{shiu}.
\begin{thm}[Thorne]\label{thornethm}
Let $a,m \in \FF_q[T]$ with $\gcd(a, m) = 1$ and $\deg m>0$, $m$ monic. There exists a constant $D'$ (depending on $q$ and $m$) such that for any $D > D'$ there exists a string of consecutive monic primes \begin{equation}
p_{r+1} \equiv p_{r+2} \cdots \equiv p_{r+k} \equiv a \bmod m
\end{equation}
of degree at most $D$, where $k$ satisfies
\begin{equation}
k \gg_{q} \frac{1}{\phi(m)} \left(\frac{\log D}{(\log \log D)^2} \right)^{1/\phi(m)}.
\end{equation}
\end{thm}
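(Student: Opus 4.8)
The plan is to recognise this as the $\FF_q[T]$-analogue of Shiu's theorem on strings of consecutive primes in an arithmetic progression and to run the Shiu--Thorne argument, supplying its analytic input through the generating-function estimates developed in this paper. The one ingredient that genuinely needs proving is a robust lower bound for
\[
S(n;1,m) = \#\{f \in \mathcal{M}_{n,q} : P \mid f \implies P \equiv 1 \bmod m\}
\]
(and for its smoothed variant, in which every prime factor $P$ is also required to satisfy $\deg P \le w$); with this in hand the rest is combinatorics of the same type as in \cite{shiu,thorne}.

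First I would establish the key estimate. The monic primes $\equiv 1 \bmod m$ are exactly those that split completely in the cyclotomic function field $\FF_q(T,\Lambda_m)$, whose Galois group over $\FF_q(T)$ is $(\FF_q[T]/m)^{\times}$ of order $\phi(m)$, so by character orthogonality modulo $m$,
\[
\sum_{n \ge 0} S(n;1,m)\,u^n \;=\; \prod_{P \equiv 1 \bmod m}(1-u^{\deg P})^{-1} \;=\; \Big(\prod_{\chi \bmod m} L(u,\chi)\Big)^{1/\phi(m)} R(u),
\]
with $R$ holomorphic and non-vanishing on $|u|<q^{-1/2}$. The principal character contributes $\big((1-u)(1-qu)\big)^{-1/\phi(m)}$ times a polynomial, producing the dominant singularity $(1-qu)^{-1/\phi(m)}$ at $u=q^{-1}$, while for $\chi\neq\chi_0$ the $L(u,\chi)$ are polynomials of degree $<\deg m$ whose zeros lie on $|u|=q^{-1/2}$ (Weil). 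Thus the generating function is of the form $(1-qu)^{-1/\phi(m)}H(u)$ with $H$ holomorphic on $|u|<q^{-1/2}$, positive at $u=q^{-1}$, and of conductor-type size $\asymp \deg m$ --- exactly the class of generating functions treated in the proof of Theorem \ref{thmchinintimprov}. The coefficient-extraction argument used there then yields
\[
S(n;1,m) = \Big(c_{q,m} + O_{q,m}\big(\tfrac1n\big)\Big)\binom{n+\tfrac1{\phi(m)}-1}{n}\,q^n, \qquad c_{q,m}>0,
\]
hence $S(n;1,m) \asymp_{q,m} q^{n}n^{1/\phi(m)-1}$ for $n \ge n_0(q,m)$; the smoothed count, and the two-sided bounds for $S(n;a,m)$ needed to pass between residue classes, come out the same way.

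Next I would carry out Thorne's bubble construction. Fix a degree $N \le D$ and a threshold $t$ (to be optimised, essentially $t \approx \log_q N$), and assign a distinct monic prime of degree $\le w \approx t$ to each offset class; by CRT this produces a monic $g_0$ of degree $N$ and a modulus $Q$ with $\deg Q \lesssim q^{t}w \le N$ such that every $g$ in the ball $\{g : \deg(g-g_0)<t\}$ that is coprime to $m$ but $\not\equiv a \bmod m$ has a prime factor of degree $\le w$, and is therefore composite --- so every prime in such a ball is $\equiv a \bmod m$. It then remains to produce one such ball that contains at least $k$ primes. Because the ball is a residue class to a modulus of degree comparable to $N$, there is no equidistribution statement for primes available; instead one averages over the family of admissible balls, using the lower bound for $S(\,\cdot\,;1,m)$ from the previous step (after translating $\equiv 1$ to $\equiv a$) to show that the mean number of primes $\equiv a$ per ball is large enough for some ball to reach the target. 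Balancing $t$, $w$ and $N\le D$ against the exponent $1/\phi(m)$ gives $k \gg_q \tfrac1{\phi(m)}\big(\tfrac{\log D}{(\log\log D)^2}\big)^{1/\phi(m)}$, and one takes $D'=D'(q,m)$ large enough that the thresholds of Step~1 are met.

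The hard part is exactly what it is in Shiu's original argument: there is no usable equidistribution of primes in arithmetic progressions to large moduli, which is what forces the averaging step and is responsible for the $(\log\log D)^2$ loss in the exponent $1/\phi(m)$. What the present methods contribute is the clean, uniform, explicit lower bound for $S(n;1,m)$ --- with relative error $O(1/n)$ in the range that matters --- which is precisely the input the averaging step needs, and which removes any need to reprove the underlying Landau-type estimate by the more delicate devices of \cite{shiu,thorne}.
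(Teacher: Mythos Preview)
The paper does not prove this theorem. Theorem~\ref{thornethm} is quoted from Thorne \cite[Thm.~1.2]{thorne} as motivation; the paper's actual contribution in this direction is Theorem~\ref{semiariththm}, the effective estimate for $S(n;a,m)$, and the remark that making the asymptotic for $S(n;1,m)$ effective allows one to pin down the constant $D'$. There is therefore no ``paper's own proof'' to compare against.

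Your sketch is a reasonable outline of how Thorne's argument goes, and you correctly identify the role that the Landau-type estimate for $S(n;1,m)$ plays as the analytic input to the averaging step. But you have over-interpreted what this paper is doing: it does not rerun the Shiu--Thorne combinatorics at all, and it does not claim to. What the paper supplies is precisely (and only) your ``Step~1'' --- the uniform estimate for $S(n;a,m)$ with explicit constants and an explicit range of validity --- packaged as Theorem~\ref{semiariththm}. The bubble construction, the CRT setup, the averaging over balls, and the optimisation of parameters are all in \cite{thorne}, not here. If your task was to reproduce the paper's proof of this statement, the correct answer is that there is none to reproduce.
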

Here ``consecutive'' is to be understood with respect to lexicographic order. Thorne's proof uses the following asymptotic formula, corollary of a result of Manstavi\v cius and Skrabut\. enas \cite[Thm.~1]{1mod}:
\begin{align}
\label{eq:arithintro}
S(n;1,m) &\sim  C_{1,m} \cdot \frac{n^{-1+\frac{1}{\phi(m)}}}{\Gamma(1/\phi(m))} \cdot  q^n , \quad n \to \infty,
\end{align}
where
\begin{align}
C_{a,m} &= \lim_{u \to \frac{1}{q}^{-}} (1-qu)^{1/\phi(m)} \prod_{P \in \mathcal{P}_q:P\equiv a\Mod m}(1-u^{\deg P})^{-1}.
\end{align} 
As remarked after \cite[Lem.~3.5]{thorne}, making \eqref{eq:arithintro} effective allows one to determine the constant $D'$ appearing in Theorem \ref{thornethm}. We establish the following version of \eqref{eq:arithintro}.
\begin{thm}\label{semiariththm}
Let $a,m \in \FF_q[T]$ be a pair of coprime polynomials with $\deg m>0$ and $m$ monic. For 
\begin{equation}
n \ge 1+5\left(6 \deg m +30\right)\ln\left(6 \deg m + 30\right),
\end{equation}
we have
\begin{equation}\label{snamest}
S(n;a,m) =  \binom{n+\frac{1}{\phi(m)}-1}{n} \cdot q^n  \left( C_{a,m} + O\left(\frac{\exp(\frac{3(\deg m+3)}{\sqrt{q}})\frac{(\deg m+3)}{\sqrt{q}}}{n}\right) \right),
\end{equation}
where the implied constant is (at most) 48.
\end{thm}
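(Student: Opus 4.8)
The plan is to study the generating function $F(u):=\sum_{n\ge0}S(n;a,m)\,u^n=\prod_{P\equiv a\,(m)}(1-u^{\deg P})^{-1}$ (product over $P\in\mathcal P_q$), to exhibit it in the form $(1-qu)^{-1/\phi(m)}G(u)$ with $G$ holomorphic and non-vanishing on $|u|<q^{-1/2}$, and then to extract $[u^n]F(u)$ effectively. By orthogonality of the Dirichlet characters mod $m$, $F(u)=\prod_{\chi}\bigl(\prod_P(1-u^{\deg P})^{-\chi(P)}\bigr)^{\overline\chi(a)/\phi(m)}$; the inner product is \emph{not} the $L$-function $L(u,\chi)$, but taking logarithms gives $\prod_P(1-u^{\deg P})^{-\chi(P)}=L(u,\chi)\,E_\chi(u)$, where $E_\chi(u)=\exp\bigl(\sum_P\sum_{k\ge2}\tfrac{\chi(P)-\chi(P)^k}{k}u^{k\deg P}\bigr)$ involves only exponents $k\ge2$ and hence (using $\pi_q(k)\le q^k/k$) is holomorphic and zero-free on $|u|<q^{-1/2}$. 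Since $L(u,\chi_0)=(1-qu)^{-1}\prod_{P\mid m}(1-u^{\deg P})$, $E_{\chi_0}\equiv1$, and for $\chi\ne\chi_0$ the Riemann Hypothesis for Dirichlet $L$-functions over $\FF_q[T]$ (Weil) makes $L(u,\chi)$ a polynomial all of whose inverse roots have modulus $\le q^{1/2}$, we obtain $F(u)=(1-qu)^{-1/\phi(m)}G(u)$ with $G(u)=\prod_{P\mid m}(1-u^{\deg P})^{1/\phi(m)}\prod_{\chi\ne\chi_0}\bigl(L(u,\chi)E_\chi(u)\bigr)^{\overline\chi(a)/\phi(m)}$, holomorphic and non-vanishing on $|u|<q^{-1/2}$ with $G(0)=1$; comparison with the definition of $C_{a,m}$ gives $C_{a,m}=G(1/q)$.

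Write $\alpha=1/\phi(m)$ and $H(u):=(G(u)-G(1/q))/(1-qu)$, holomorphic on $|u|<q^{-1/2}$. Then $F(u)=G(1/q)(1-qu)^{-\alpha}+(1-qu)^{1-\alpha}H(u)$, so $S(n;a,m)=C_{a,m}\binom{n+\alpha-1}{n}q^n+[u^n]\bigl((1-qu)^{1-\alpha}H(u)\bigr)$, and the whole problem reduces to bounding the last coefficient, which equals $\sum_{j=0}^n h_j\binom{1-\alpha}{n-j}(-q)^{n-j}$ where $H(u)=\sum_j h_j u^j$. The terms with $n-j$ large (equivalently $j$ small) carry the principal contribution: here $|\binom{1-\alpha}{k}(-q)^k|\asymp q^kk^{\alpha-2}/\phi(m)$ (for $k$ large) mirrors the $\asymp q^n n^{\alpha-1}/\phi(m)$ size of the binomial part of the main term, so the bothersome factor $\phi(m)$ cancels and these terms contribute an $O(\max_{|u|=\rho_1}|H|/n)$ relative error on a circle $|u|=\rho_1$ with $\rho_1$ slightly larger than $1/q$. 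The remaining terms ($j$ close to $n$) are estimated on a circle $|u|=\rho_2$ with $\rho_2$ close to $q^{-1/2}$, where Cauchy's inequality $|h_j|\le\rho_2^{-j}\max_{|u|=\rho_2}|H|$ supplies exponential decay in $n$ at a rate that overwhelms the factor $\phi(m)\le q^{\deg m}$ once $n$ exceeds a threshold of size $O(\deg m)$.

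It remains to bound $G$ (hence $H$, $C_{a,m}$, and the $h_j$) on the two circles. On $|u|=\rho$ one has $|\log G(u)|\le\phi(m)^{-1}\sum_{\chi\ne\chi_0}|\log L(u,\chi)|$, plus a contribution from the $E_\chi$ and the $P\mid m$ factors that is $O(1/q)$; factoring $L(u,\chi)=\prod_i(1-\gamma_i u)$ with $|\gamma_i|\le q^{1/2}$ and $\deg_u L(u,\chi)\le\deg m$ gives $|\log L(u,\chi)|\le\deg m\cdot\bigl(-\log(1-q^{1/2}\rho)\bigr)$. Taking $\rho_1=c/q$ with an absolute constant $c$ just above $1$ makes $q^{1/2}\rho_1=c/\sqrt q$ and $-\log(1-c/\sqrt q)\le3/\sqrt q$, hence $|\log G(u)|\le3(\deg m+3)/\sqrt q$ on $|u|=\rho_1$; this furnishes the stated exponential factor, and via $C_{a,m}=G(1/q)$ the matching lower bound on $C_{a,m}$, while on $|u|=\rho_2\asymp q^{-1/2}$ the cruder bound $|\log G(u)|\le O(\deg m)$ is harmless, being overpowered by the exponential decay. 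Assembling the two pieces and tracking the geometric sums yields the claimed formula with absolute implied constant $48$, valid for $n\ge1+5(6\deg m+30)\ln(6\deg m+30)$ — a safe form of the threshold at which the exponentially decaying error terms drop below the $1/n$ main term uniformly in $q$ and $m$.

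The analytic skeleton above is routine; the substance of the proof, and its main obstacle, is the effective two-scale analysis. One cannot work with a single contour: moving the circle of integration towards $q^{-1/2}$ accelerates the exponential decay needed to absorb the factor $\phi(m)\le q^{\deg m}$ (necessary since the main term is only of size $\asymp q^n/(n\phi(m))$) but destroys the $e^{O(\deg m/\sqrt q)}$ control of $|G|$ near the branch cut at $|u|=q^{-1/2}$, whereas a circle of radius $\asymp1/q$ does the reverse. Splitting the coefficient sum so that each range of $j$ is handled on the circle favourable to it, and then checking that the single stated threshold on $n$ simultaneously controls every error term while the accumulated numerical constants stay below $48$, is the delicate part.
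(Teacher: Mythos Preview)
Your approach is sound in outline but takes a genuinely different route from the paper's. The paper never invokes $L$-functions or orthogonality of characters: instead it writes $F_{G_{a,m}}(u)=\exp\bigl(\sum_{n\ge1}\psi(n)u^n/n\bigr)$ with $\psi(n)=\sum_{d\mid n}d\,\pi_q(d;a,m)$, and uses Wan's explicit bound $|n\,\pi_q(n;a,m)-q^n/\phi(m)|\le(\deg m+1)q^{n/2}$ (itself a corollary of Weil) together with a crude tail estimate to obtain $|\psi(n)-q^n/\phi(m)|\le(\deg m+3)q^{n/2}$. This places $G$ directly in the form $\exp(\sum e_nu^n/n)$ with $|e_n|\le c_2\,q^{n/2}$, $c_2=\deg m+3$, and the entire theorem then follows in a few lines by plugging $c_1=1/\phi(m)$, $c_2$, $\alpha=q^{-1/2}$, $\beta=q^{-1}$ into the paper's general coefficient-extraction corollary, which is proved once and for all by purely real-variable means (a combinatorial identity for $b_{n-i}/b_n$ and the integral form of the Taylor remainder), with no contours. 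The two-scale phenomenon you identify---an $O(c_2 r/n)$ term versus an exponentially decaying $r^n$ term, with the threshold on $n$ chosen so that the latter is dominated by the former---is exactly the mechanism behind that corollary, so structurally the two arguments match. What the paper's modular organization buys is that the constants $48$ and $3$ and the threshold $1+5(6\deg m+30)\ln(6\deg m+30)$ are read off mechanically; in your complex-analytic framework these would require further bookkeeping (for instance, bounding $|G(u)-G(1/q)|$ rather than merely $|G(u)|$ to recover the factor $(\deg m+3)/\sqrt q$, and balancing the choice of $c$ in $\rho_1=c/q$ against the resulting $1/(c-1)$ losses) that you correctly flag as ``the delicate part'' but do not carry out.
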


\section{Methods}\label{methods}
Let $q$ be a prime power, and let $\FF_q[T]$ be the polynomial ring over the finite field $\FF_q$. Denote by $\pi_q(n)$ the number of irreducible monic polynomials in $\FF_q[T]$ of degree $n$.

Let $G \subseteq \mathbb{F}_{q}[T]$ be a multiplicative semigroup generated by (possibly infinitely many) mutually coprime monic polynomials. Let $g(n)$ be the number of generators of degree $n$. The coefficients of the generating function
\begin{equation}
\begin{split}
F_G(x) &:= \prod_{n\ge 1} (1+x^n+x^{2n} + \cdots )^{g(n)} \\
&=\prod_{n \ge 1} (1-x^n)^{-g(n)}
\end{split}
\end{equation}
count how many polynomials in $G$ are of a specified degree. By writing $F_G(x)=\exp( \ln F_G(x) )$, we may express $F_G(x)$ as the formal power series 
\begin{equation}
\exp \left(\sum_{n \ge 1} \frac{\psi_G(n) x^n}{n} \right),
\end{equation}
where
\begin{equation}
\psi_G(n):= \sum_{d \mid n} d \cdot g(d).
\end{equation}
For instance, when $G=\mathcal{M}_q \subseteq \FF_q[T]$ is the semigroup of monic polynomials, generated by monic irreducible polynomials, the corresponding function $\psi_{\mathcal{M}_q}$ is the weighted prime-counting function $\psi_{\mathcal{M}_q}(n) = \sum_{d \mid n} d \cdot \pi_q(d)$, which satisfies \cite[Prop.~2.1]{rosen}
\begin{equation}\label{classicpsi}
\psi_{\mathcal{M}_q}(n) = \sum_{d \mid n} d\cdot \pi_q(d) = q^n.
\end{equation}
It turns out that for many semigroups that occur in counting problems, $\psi_G$ satisfies the following bound, where $\alpha, \beta,c_1,c_2$ are some real numbers satisfying $\alpha > \beta>0$:
\begin{align}\label{eq:prop1}
|\psi_G(n) - c_1 \beta^{-n}| &\le c_2 \alpha^{-n}.
\end{align}
The bound \eqref{eq:prop1} insures that we can write $F_G$ as a product of two analytic functions, with radii of convergence at most $\alpha$ and exactly $\beta$, respectively:
\begin{equation}\label{eq:decompprod}
\begin{split}
F_G(x) &=\exp \left(\sum_{n \ge 1} \frac{e_n x^n}{n} \right) \cdot \exp \left(\sum_{n \ge 1} \frac{c_1 \beta^{-n} x^n}{n} \right)\\
& = \exp \left(\sum_{n \ge 1} \frac{e_n x^n}{n}\right) \cdot \left(1-\frac{x}{\beta}\right)^{-c_1},
\end{split}
\end{equation}
where 
\begin{equation}
e_n=\psi_G(n) - c_1 \beta^{-n}.
\end{equation}
In particular, the radii of convergence are distinct.

The problem of estimating the coefficients of a product of two functions with different radii of convergence has been studied previously. First, we introduce a useful piece of notation. For a power series $f(x)=\sum_{i \ge 0} f_i x^i$, $[x^n]f(x)$ denotes the coefficient of $x^n$ in $f$, i.e. $f_n$.

We mention two classical results. The first is \cite[Thm.~VI.12]{flaj}.
\begin{thm}\label{analy1} Let $a(x) =\sum a_n x^n$ and $b(x) = \sum b_n x^n$ be two power series with radii of convergence $\alpha > \beta \ge 0$, respectively. Assume that $b_{n-1}/b_n \to \beta$ as $n$ tends to $\infty$, and that $a(\beta) \neq 0$. Then the coefficients of the product $f(x) =a(x)\cdot b(x)$ satisfy the following, as $n$ tends to $\infty$:
\begin{equation}
[x^n] f(x) \sim a(\beta) \cdot b_n.
\end{equation}
\end{thm}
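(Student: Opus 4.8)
The statement to prove is Theorem~\ref{analy1}, a classical result from Flajolet--Sedgewick on the coefficients of a product of two power series with distinct radii of convergence. The plan is to exploit the gap between $\alpha$ and $\beta$ to show that the coefficients of $f = a \cdot b$ are dominated by the ``$a(\beta) \cdot b_n$'' contribution, with everything else being of strictly smaller exponential order.

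First I would write out the convolution $[x^n]f(x) = \sum_{k=0}^n a_k b_{n-k}$ and split it at some cutoff, say $k \le n/2$ and $k > n/2$. The tail $k > n/2$ is harmless: since $b$ has radius of convergence $\beta$, the ratio test hypothesis $b_{n-1}/b_n \to \beta$ forces $|b_j|^{1/j} \to 1/\beta$, so $b_{n-k}$ for $n-k < n/2$ is bounded by $C(1/\beta + \epsilon)^{n/2}$, while $a_k$ for $k > n/2$ only grows subexponentially in that range relative to $\alpha^{-k}$ — more carefully, I would bound $\sum_{k > n/2}|a_k b_{n-k}|$ by (sup of $|b_j|$ over $j < n/2$) times $\sum_{k>n/2}|a_k|$, and since $\alpha > \beta$ this whole piece is $o(b_n)$ once one knows $|b_n|$ does not decay faster than any fixed exponential rate below $1/\beta$. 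The main term is the head: for $k \le n/2$, I would compare $b_{n-k}$ to $b_n$ via the telescoping product $b_{n-k}/b_n = \prod_{j=0}^{k-1} b_{n-k+j}/b_{n-k+j+1}$, each factor of which tends to $\beta$; uniform control over $k \le n/2$ as $n \to \infty$ shows $b_{n-k}/b_n \to \beta^k$. Then $\sum_{k \le n/2} a_k b_{n-k} = b_n \sum_{k \le n/2} a_k (b_{n-k}/b_n) \to b_n \sum_{k \ge 0} a_k \beta^k = b_n \cdot a(\beta)$, the interchange of limit and sum being justified by dominated convergence since $\sum |a_k|\beta^k < \infty$ (as $\beta < \alpha$).

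The key technical point, and the main obstacle, is converting the hypothesis $b_{n-1}/b_n \to \beta$ into the two facts I need: (i) $b_{n-k}/b_n \to \beta^k$ \emph{uniformly} for $k$ in a growing range like $k \le n/2$, and (ii) a lower bound of the shape $|b_n| \ge (1/\beta - \epsilon)^n$ for all large $n$ and every $\epsilon > 0$, which is what makes the tail estimate $o(b_n)$. Both follow from the observation that $\log|b_{n-1}| - \log|b_n| \to \log\beta$, so the partial sums (i.e. $\log|b_n|$) behave like $-n\log(1/\beta)$ up to $o(n)$, and Cesàro-type arguments give the uniformity; but one has to be a little careful that the $o(n)$ error in $\log|b_n|$ does not swamp things — this is exactly why one needs the \emph{strict} inequality $\alpha > \beta$ and not merely $\alpha \ge \beta$, since the tail carries a genuine exponential saving $(\beta/\alpha)^{n/2}$ that beats any subexponential fluctuation.

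Finally I would assemble the pieces: $[x^n]f(x) = \big(\text{head}\big) + \big(\text{tail}\big) = a(\beta) b_n (1 + o(1)) + o(b_n) = a(\beta) b_n (1+o(1))$, which is the claimed asymptotic equivalence $[x^n]f(x) \sim a(\beta) b_n$, using the hypothesis $a(\beta) \neq 0$ to conclude that the right-hand side is genuinely nonzero and the relation is a true asymptotic equivalence rather than a vacuous one. (In the paper's intended application $b(x) = (1-x/\beta)^{-c_1}$ has explicit coefficients $b_n = \beta^{-n}\binom{n+c_1-1}{n}$, for which both (i) and (ii) are transparent, so the abstract theorem is more than sufficient; but since we are asked to prove the theorem as stated, the argument above handles the general $b$.)
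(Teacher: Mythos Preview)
The paper does not prove Theorem~\ref{analy1}; it quotes it as \cite[Thm.~VI.12]{flaj} and remarks only that it ``is proved using calculus'' before moving on to use it. So there is no paper-proof to compare against.

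Your sketch is the standard elementary argument and is essentially correct. Two small points worth tightening. First, for the dominated-convergence step on the head you wrote that the interchange is justified ``since $\sum |a_k|\beta^k < \infty$''; what you actually need as a dominating sequence is $|a_k|(\beta+\epsilon)^k$ for some $\epsilon>0$ with $\beta+\epsilon<\alpha$, coming from the bound $|b_{n-k}/b_n|\le(\beta+\epsilon)^k$ once $n-k$ is past the threshold where all ratios $|b_{j-1}/b_j|$ lie in $(\beta-\epsilon,\beta+\epsilon)$. This is available precisely because $\alpha>\beta$, as you note. Second, your tail bound ``$(\sup_{j<n/2}|b_j|)\cdot\sum_{k>n/2}|a_k|$'' is a bit coarse and, as written, forces you to compare two exponentials whose bases are close to $1/\beta$ and $1/\alpha$ respectively with sub-exponential slop on each side; it works, but it is cleaner to keep the same telescoping bound $|b_{n-k}/b_n|\le(\beta+\epsilon)^k$ for $n/2<k\le n-N$ (so that this range is again the tail of the convergent series $\sum|a_k|(\beta+\epsilon)^k$) and handle the finitely many indices $n-N<k\le n$ separately via $|a_k|\le C\gamma^{-k}$ for some $\beta<\gamma<\alpha$ together with the lower bound $|b_n|\ge c(\beta+\epsilon)^{-n}$. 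With those adjustments the argument is complete.
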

Theorem \ref{analy1} is proved using calculus. Applying Theorem \ref{analy1}, we solve the problem of estimating $[x^n]F_G(x)$ as $n$ tends to $\infty$ and $q$ is fixed, under assumption \eqref{eq:prop1}:
\begin{equation}
\begin{split}
[x^n]F_G(x) &\sim \exp\left(\sum_{n \ge 1} \frac{e_n \beta^n}{n} \right)  \cdot  \binom{-c_1}{n} \cdot \left(-\frac{1}{\beta}\right)^n\\
&=\exp\left(\sum_{n \ge 1} \frac{e_n \beta^n}{n} \right) \cdot \binom{n+c_1-1}{n} \cdot \beta^{-n} .
\end{split}
\end{equation}
Theorem \ref{analy1} does not give information on the rate of convergence. A method, due to Darboux, gives a more informative estimate for certain $b$ \cite[Thm.~11.10b]{henrici}.
\begin{thm}\label{darbmetthm}
Let $a(x) = \sum a_n x^n$ and $b(x) =(1-\frac{x}{\beta})^{-c_1} = \sum b_n x^n$ $(c_1 \in \mathbb{C} \setminus \mathbb{Z})$ be two power series with radii of convergence $\alpha > \beta \ge 0$, respectively. Fix an integer $m \ge 0$. Then the coefficients of the product $f(x) =a(x)\cdot b(x)$ satisfy the following, as $n$ tends to $\infty$:
\begin{align}\label{eq:darbstat}
[x^n] f(x) &= b_n \cdot \left( a(\beta) + \sum_{k=1}^{m} \frac{\binom{k-c_1}{k}}{\binom{n+c_1-1}{k}} \frac{\beta^k}{k!} a^{(k)}(\beta)+ O_{a,b,m} \left(\frac{1}{n^{m+1}}\right) \right).
\end{align}
\end{thm}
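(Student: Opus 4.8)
The plan is to isolate the singularity of $f$ at $x=\beta$ by replacing $a$ with its Taylor polynomial there. Since $a$ is holomorphic on $|x|<\alpha$ and $\beta\in(0,\alpha)$, I would write
\begin{equation*}
a(x) = P_m(x) + (x-\beta)^{m+1}h(x), \qquad P_m(x):=\sum_{k=0}^m \frac{a^{(k)}(\beta)}{k!}(x-\beta)^k,
\end{equation*}
where $h$ is holomorphic on $|x|<\alpha$ (the quotient $(a-P_m)/(x-\beta)^{m+1}$ has a removable singularity at $\beta$). Then $f = P_m\cdot b + r$ with $r(x)=(x-\beta)^{m+1}h(x)b(x)$: the first piece reproduces the main term of \eqref{eq:darbstat} exactly, while $r$ contributes only $O(b_n/n^{m+1})$.

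For the main term, the key observation is that multiplying $b(x)=(1-x/\beta)^{-c_1}$ by $(x-\beta)^k=(-\beta)^k(1-x/\beta)^k$ merely shifts the exponent of the singularity, so
\begin{equation*}
[x^n]\big((x-\beta)^k b(x)\big) = (-\beta)^k(-1)^n\beta^{-n}\binom{k-c_1}{n}, \qquad b_n=(-1)^n\beta^{-n}\binom{-c_1}{n}.
\end{equation*}
Because $c_1\notin\mathbb{Z}$, the binomial coefficients $\binom{-c_1}{n}$ and $\binom{n+c_1-1}{k}$ never vanish, and cancelling the $n-k$ common factors in the falling factorials gives the identity $\binom{k-c_1}{n}\big/\binom{-c_1}{n} = (-1)^k\binom{k-c_1}{k}\big/\binom{n+c_1-1}{k}$. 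Hence, summing $\frac{a^{(k)}(\beta)}{k!}[x^n]\big((x-\beta)^kb(x)\big)$ over $0\le k\le m$ and dividing by $b_n$ produces exactly $a(\beta)+\sum_{k=1}^m \frac{\binom{k-c_1}{k}}{\binom{n+c_1-1}{k}}\frac{\beta^k}{k!}a^{(k)}(\beta)$ (the $k=0$ term being $a(\beta)$).

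It remains to estimate $[x^n]r = (-\beta)^{m+1}[x^n]\big((1-x/\beta)^{m+1-c_1}h(x)\big)$. Writing $h(x)=\sum_j h_j x^j$, this coefficient is the convolution $\sum_{j=0}^n h_j(-1)^{n-j}\beta^{j-n}\binom{m+1-c_1}{n-j}$. I would feed in two estimates: $|h_j|\ll_{h,\rho}\rho^{-j}$ for a fixed $\rho\in(\beta,\alpha)$ (Cauchy), and $\big|\binom{m+1-c_1}{i}\big| = |\Gamma(i+c_1-m-1)|/(|\Gamma(c_1-m-1)|\,i!)\ll_{c_1,m}(1+i)^{\Re c_1-m-2}$ for all $i\ge 0$ (valid since $c_1\notin\mathbb{Z}$). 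This yields a bound $\ll \beta^{-n}\sum_{j=0}^n(\beta/\rho)^j(1+n-j)^{\Re c_1-m-2}$; splitting at $j=n/2$, the range $j\le n/2$ contributes $\ll\beta^{-n}n^{\Re c_1-m-2}$ by a geometric series (using $\beta/\rho<1$), and the range $j>n/2$ contributes $\ll\beta^{-n}(\beta/\rho)^{n/2}n^{O(1)}$, which is exponentially smaller. So $|[x^n]r|\ll_{a,b,m}\beta^{-n}n^{\Re c_1-m-2}$. Stirling's formula gives $|b_n|=\beta^{-n}|\Gamma(n+c_1)|/(|\Gamma(c_1)|\,n!)\asymp \beta^{-n}n^{\Re c_1-1}$, whence $[x^n]r = O_{a,b,m}(b_n/n^{m+1})$, which is \eqref{eq:darbstat}.

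I expect the last paragraph — the coefficient bound for $r$ — to be the main obstacle, since it carries the actual content of Darboux's method. The delicate points are getting the \emph{sharp} exponent $\Re c_1-m-2$ (so the error is genuinely of relative order $n^{-m-1}$, not something weaker), making the estimate uniform in $n$ via the split convolution, and propagating $c_1\notin\mathbb{Z}$ throughout so that no binomial coefficient or Gamma value in sight vanishes or diverges. An alternative for this step is to invoke the Flajolet--Odlyzko singularity-analysis transfer theorem, deducing $[x^n]r\ll\beta^{-n}n^{\Re c_1-m-2}$ directly from the local behaviour $r(x)=O\big((1-x/\beta)^{m+1-\Re c_1}\big)$ near $x=\beta$ together with analyticity of $h$ on a disc of radius $>\beta$; the elementary convolution argument has the advantage of being self-contained.
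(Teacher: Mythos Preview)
Your proof is correct. However, the paper does not itself prove Theorem~\ref{darbmetthm}: it is quoted from Henrici \cite[Thm.~11.10b]{henrici}, with the remark that Knuth and Wilf \cite{knuth} gave an elementary proof. So there is no proof in the paper to compare yours against directly.

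That said, the paper \emph{does} prove the closely related effective Theorem~\ref{thm1}, and the method there is genuinely different from yours. Rather than expanding $a$ at $x=\beta$ and peeling off the singular part (the classical Darboux/singularity-analysis route you take), the paper expands $a$ at $x=0$, writes $f_n/b_n$ as the finite sum $\sum_{i=0}^{n} \beta^{i}(-1)^{i}\frac{\binom{-c_1}{n-i}}{\binom{-c_1}{n}}\frac{a^{(i)}(0)}{i!}$, and then invokes the combinatorial identity $(-1)^{i}\frac{\binom{-c_1}{n-i}}{\binom{-c_1}{n}}=\sum_{k=0}^{i}\frac{\binom{i}{k}\binom{k-c_1}{k}}{\binom{n+c_1-1}{k}}$ (Lemma~\ref{lem1}) to reorganise by $k$. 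The terms with $k\le m$ are regrouped via the integral form of Taylor's remainder to produce the main term plus an integral error $S_2$; the terms with $k>m$ form a second error $S_1$; both are then bounded by hand through explicit inequalities (Lemmas~\ref{sumlem} and~\ref{intlem}). This avoids any contour or transfer argument and keeps every constant trackable, which is exactly what the paper needs for its uniform-in-$q$ estimates. Your approach is cleaner for the bare asymptotic statement of Theorem~\ref{darbmetthm}, but would not by itself deliver the explicit dependence on $c_1,c_2,r$ that Theorem~\ref{thm1} records.
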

Most proofs of Theorem \ref{darbmetthm} use contour integration, although an elementary proof was found by Knuth and Wilf \cite{knuth}. Using \eqref{eq:decompprod}, Theorem \ref{darbmetthm} gives an asymptotic expansion of $[x^n] F_G(x)$, as $n$ tends to $\infty$, in all of this paper's applications. 

Methods similar to Theorem \ref{darbmetthm} were used to study semigroup counting problems related to finite fields in the limit $n \to \infty$ and $q$ fixed, see the papers \cite{warlimont, 1mod} and the monographs \cite{knop1,knop2}.

Our objective is to establish results on the magnitude of $[x^n]F_G(x)$ in the limit $q^n \to \infty$, for which Theorem \ref{darbmetthm} is not suitable. To do so, we need to work out the dependency of the error term in \eqref{eq:darbstat} on the parameters $a$, $b$. This is done in the following theorem, under additional restrictions influenced by \eqref{eq:prop1}.
\begin{thm}\label{thm1}
Let $a(x)=\exp\left(\sum_{n \ge 1} \frac{\widetilde{a_n} x^n}{n} \right)=\sum a_n x^n$ and $b(x) = (1-\frac{x}{\beta})^{-c_1} = \sum b_n x^n$ $\left(c_1 \in \left(0,1\right) \right)$ be two power series, with radii of convergence at least $\alpha$ and exactly $\beta$, respectively. Assume that $\alpha > \beta > 0$. Assume that
\begin{equation}\label{properties}
r=\frac{\beta}{\alpha} \le \frac{1}{\sqrt{2}}.
\end{equation}
Further assume that there is a positive number $c_2$ such that
\begin{align}\label{properties3}
|\widetilde{a_n}| &\le \frac{c_2}{\alpha^n}.
\end{align}
Fix an integer $m \ge 0$. For an integer $n>m$, write the $n$th coefficient $f_n$ of $f(x)=a(x)\cdot b(x)$ as 
\begin{equation}\label{edef}
\begin{split}
[x^n] f(x) &= b_n \cdot \left( a(\beta) + \sum_{k=1}^{m} \frac{\binom{k-c_1}{k}}{\binom{n+c_1-1}{k}} \frac{\beta^k}{k!} a^{(k)}(\beta)+ E \right).
\end{split}
\end{equation}
Then
\begin{align}
\left| E\right| &\ll_m  \exp(3 c_2 r) \left( \left(\frac{r}{n} \right)^{m+1}  (c_2+m)_{m+1} + \binom{n+c_2-1}{n} \frac{4n^2}{c_1} r^n  \right) \label{eq:estimatethm1} \\
& \ll_{m,c_1,c_2}  \left(\frac{r}{n}\right)^{m+1},\label{eq:estimatethm12}
\end{align}
where $(x)_n:=x(x-1)\cdots (x-(n-1))$ is the falling factorial. For $m=0$, the implicit constant in \eqref{eq:estimatethm1} is (at most) 24.
\end{thm}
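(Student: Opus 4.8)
The plan is to compute $f_n:=[x^n]f(x)$ as the convolution $f_n=\sum_{j=0}^{n}a_jb_{n-j}$, with $b_k=\binom{k+c_1-1}{k}\beta^{-k}$, and to extract the asserted expansion by hand. Pulling out $b_n$,
\[
f_n=b_n\sum_{j=0}^{n}a_j\beta^{j}\rho_{n,j},\qquad \rho_{n,j}:=\frac{\binom{n-j+c_1-1}{n-j}}{\binom{n+c_1-1}{n}} .
\]
The combinatorial heart of the argument is the exact identity
\[
\rho_{n,j}=\sum_{l=0}^{j}\binom{j}{l}\,\frac{\binom{l-c_1}{l}}{\binom{n+c_1-1}{l}}\qquad(0\le j\le n),
\]
which I would obtain by expanding $x^{j}(1-x)^{-c_1}=\sum_{l=0}^{j}\binom{j}{l}(-1)^{l}(1-x)^{l-c_1}$, reading off the coefficient of $x^{n}$, and simplifying the binomials. (Equivalently, one first checks $[x^{n}]\big((x-\beta)^{k}b(x)\big)=b_n\beta^{k}\binom{k-c_1}{k}\big/\binom{n+c_1-1}{k}$, which is exactly why the $k$-th term in \eqref{edef} equals $b_n$ times the $l=k$ part of $\sum_{j}a_j\beta^{j}\rho_{n,j}$.) Using $\frac{\beta^{k}}{k!}a^{(k)}(\beta)=\sum_{j\ge 0}\binom{j}{k}a_j\beta^{j}$, the terms $l=0,\dots,m$ of $\sum_{j\le n}a_j\beta^{j}\rho_{n,j}$ account for the main term of \eqref{edef} except for the contribution of $j>n$, so that
\[
E=\sum_{l=m+1}^{n}\frac{\binom{l-c_1}{l}}{\binom{n+c_1-1}{l}}\,S_l\;-\;\sum_{j>n}a_j\beta^{j}\,P_m(n,j),
\]
where $S_l:=\sum_{j=l}^{n}\binom{j}{l}a_j\beta^{j}$ and $P_m(n,j):=\sum_{k=0}^{m}\binom{j}{k}\binom{k-c_1}{k}\big/\binom{n+c_1-1}{k}$. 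The first sum is the tail of the Darboux expansion; the second is an artefact of having replaced the honest convolution by the values $a^{(k)}(\beta)$, and is supported on $j>n$, where it will be killed by the geometric decay $r^{j}$.

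The second ingredient is a coefficient bound for $a$. Since $a=\exp(g)$ with $g(x)=\sum_{n\ge 1}(\widetilde{a_n}/n)x^{n}$ and $|\widetilde{a_n}/n|\le(c_2/n)\alpha^{-n}$ by \eqref{properties3}, comparing the power series of $\exp(g)$ with that of $\exp\big(\sum_{n\ge 1}(c_2/n)(x/\alpha)^{n}\big)=(1-x/\alpha)^{-c_2}$ coefficientwise gives $|a_j|\le\binom{j+c_2-1}{j}\alpha^{-j}$, hence $|a_j|\beta^{j}\le\binom{j+c_2-1}{j}r^{j}$. Every sum that then occurs has a closed form: applying $\frac{y^{l}}{l!}\frac{d^{l}}{dy^{l}}$ to $(1-y)^{-c_2}$ gives
\[
\sum_{j\ge l}\binom{j}{l}\binom{j+c_2-1}{j}y^{j}=\binom{c_2+l-1}{l}\,\frac{y^{l}}{(1-y)^{c_2+l}},
\]
and the $j>n$ sum is handled by expanding $j^{k}$ in falling factorials. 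The hypothesis $r\le 1/\sqrt2$ from \eqref{properties} is used only through the elementary inequality $-\ln(1-r)\le 3r$ on $(0,1/\sqrt2\,]$, which converts factors $(1-r)^{-c_2}$ into $\exp(3c_2 r)$ (and $(1-r)^{-(c_2+l)}$ into $\exp(3c_2r)\exp(3lr)$, the extra $\exp(3lr)$ being absorbed by the geometric decay in $l$).

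To bound $E$ I would split the $l$-sum at a threshold $L\asymp n$. For $l\le L$ I enlarge $S_l$ to $\sum_{j\ge l}\binom{j}{l}|a_j|\beta^{j}$ and use the closed form above together with $\binom{n+c_1-1}{l}\ge (n/2)^{l}/l!$; the outcome is a geometric-type series in $l$ with small ratio whose $l=m+1$ term dominates, reproducing the first term of \eqref{eq:estimatethm1} up to an $m$-dependent constant (for $m=0$ one tracks this constant and gets at most $24$). For $l>L$ I resum in $j$:
\[
\sum_{l=L+1}^{n}\frac{\binom{l-c_1}{l}}{\binom{n+c_1-1}{l}}S_l=\sum_{j=L+1}^{n}a_j\beta^{j}\Big(\rho_{n,j}-\sum_{l=0}^{L}\binom{j}{l}\frac{\binom{l-c_1}{l}}{\binom{n+c_1-1}{l}}\Big),
\]
where the bracket lies in $[0,\rho_{n,j}]$, and $\rho_{n,j}\le 1/\binom{n+c_1-1}{n}\le n/c_1$ because $\binom{n-j+c_1-1}{n-j}\le 1$ and $\binom{n+c_1-1}{n}\ge c_1/n$; then $r^{j}$ collapses $\sum_{j>L}\binom{j+c_2-1}{j}r^{j}$ into $O\big(\binom{n+c_2-1}{n}r^{n}\big)$. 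The $j>n$ sum is treated the same way, using $|P_m(n,j)|\ll_m (j/(n-m))^{m}$ for $j>n>m$. I expect this last step to be the main obstacle: the estimate must be uniform in $q$, hence in $r\in(0,1/\sqrt2\,]$, so one cannot afford to replace $r^{n}$ by $r^{n/2}$ anywhere, and the threshold $L$ together with the way the geometric sums are collapsed must be arranged, and all constants tracked, so as to avoid this — the factor $4n^{2}/c_1$ in \eqref{eq:estimatethm1} being precisely the uniform-in-$r$ cost of the large-$l$ and $j>n$ contributions. Finally, \eqref{eq:estimatethm12} follows from \eqref{eq:estimatethm1} via $\binom{n+c_2-1}{n}\ll_{c_2}n^{c_2-1}$ and $n^{2}r^{n}\ll_{c_1,c_2,m}(r/n)^{m+1}$.
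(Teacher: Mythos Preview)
Your setup is correct and matches the paper: the convolution formula, the identity $\rho_{n,j}=\sum_{l=0}^{j}\binom{j}{l}\binom{l-c_1}{l}/\binom{n+c_1-1}{l}$, the coefficient bound $|a_j|\le\binom{j+c_2-1}{j}\alpha^{-j}$, and the use of $-\ln(1-r)\le 3r$ are all right. Handling the $j>n$ tail by direct summation instead of the paper's Taylor integral remainder is a legitimate alternative and would work.

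The gap is in the treatment of the $l$-sum $\sum_{l=m+1}^{n}\frac{\binom{l-c_1}{l}}{\binom{n+c_1-1}{l}}S_l$. Your two halves of the split at $L\asymp n$ are incompatible. On the large-$l$ side, the sequence $\binom{j+c_2-1}{j}r^{j}$ is (eventually) \emph{decreasing} in $j$, so $\sum_{j=L+1}^{n}\binom{j+c_2-1}{j}r^{j}$ is dominated by its first term $\asymp\binom{L+c_2}{L+1}r^{L+1}$, not by $\binom{n+c_2-1}{n}r^{n}$; your ``collapses into $O(\binom{n+c_2-1}{n}r^{n})$'' claim is false unless $L\ge n-O(1)$. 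On the small-$l$ side, enlarging $S_l$ to the infinite series produces the factor $(1-r)^{-l}$, so your $l$-th term is (up to $(1-r)^{-c_2}$) of size $\big(\tfrac{2r}{n(1-r)}\big)^{l}(c_2+l-1)_l$, with successive ratio $\tfrac{2r}{n(1-r)}(c_2+l)$. For $r$ near $1/\sqrt{2}$ this ratio exceeds $1$ once $l\gtrsim 0.2n$, and by $l\approx n/2$ the term has grown like $(r/(1-r))^{n/2}$, which blows up. So no single choice of $L$ makes both halves small uniformly in $r\in(0,1/\sqrt{2}]$; this is exactly the ``cannot afford to replace $r^{n}$ by $r^{n/2}$'' danger you flagged, and your scheme falls into it.

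The paper avoids this by \emph{not} interchanging the order of summation. It keeps $S_1=\sum_{i=m+1}^{n}\frac{a^{(i)}(0)}{i!}\beta^{i}\sum_{k=m+1}^{i}\binom{i}{k}\binom{k-c_1}{k}/\binom{n+c_1-1}{k}$ and first bounds the inner $k$-sum: for $i<n$ one has $\sum_{k=m+1}^{i}\binom{i}{k}/\binom{n+c_1-1}{k}\le \frac{(i)_{m+1}}{(n+c_1-1)_{m+1}}(i-m)$, a \emph{polynomial} in $i$, while the single term $i=n$ contributes the $n(\ln n+2/c_1)$ that becomes $4n^{2}/c_1$. Then $\sum_{i}r^{i}\binom{c_2+i-1}{i}\cdot(\text{polynomial in }i)$ is evaluated exactly by differentiating $(1-x)^{-c_2}$, yielding the $(c_2+m)_{m+1}(r/n)^{m+1}$ term with no stray $(1-r)^{-l}$ growth. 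The crucial point is that summing over $k$ first gives polynomial, not exponential, dependence on the outer index, which your order of summation loses.
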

We also establish the following effective corollary.
\begin{cor}\label{nicer}
Let $a$, $b$, $\alpha$, $\beta$ be as in Theorem \ref{thm1}, and assume that they satisfy \eqref{properties} and \eqref{properties3}. Let $f(x)=a(x)\cdot b(x)$. For $n$ large enough, specifically
\begin{equation}
n \ge \max \{ 5\left(\frac{2c_2+4}{\ln(1/r)} + 1\right) \ln \left(\frac{2c_2+4}{\ln(1/r)} + 1\right)+1 ,  2\frac{\ln(c_1^{-1})}{\ln(1/r)} + 1 \},
\end{equation}
we have
\begin{equation}\label{expcor}
[x^n]f(x) = b_n\cdot \left( a(\beta)+O\left( \frac{\exp(3c_2 r) c_2 r }{n} \right)\right),
\end{equation}
where the implied constant is (at most) 48.
\end{cor}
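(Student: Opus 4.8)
The plan is to read Corollary~\ref{nicer} off Theorem~\ref{thm1} in the case $m=0$, the only real work being to show that the exponentially small second term in the error bound \eqref{eq:estimatethm1} is swallowed by the first once $n$ passes the stated threshold. Putting $m=0$ in \eqref{edef} and \eqref{eq:estimatethm1} (so that $(c_2+m)_{m+1}=c_2$ and $(r/n)^{m+1}=r/n$), we get, for every $n\ge 1$,
\[
[x^n]f(x)=b_n\bigl(a(\beta)+E\bigr),\qquad |E|\le 24\exp(3c_2r)\Bigl(\frac{c_2 r}{n}+\binom{n+c_2-1}{n}\frac{4n^2}{c_1}r^n\Bigr).
\]
So it will suffice to prove the inequality $\binom{n+c_2-1}{n}\frac{4n^2}{c_1}r^n\le \frac{c_2 r}{n}$: this gives $|E|\le 48\exp(3c_2 r)\,c_2 r/n$, which is precisely \eqref{expcor}, and also accounts for the constant $48$ (two copies of $24$).

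To prove that inequality I would first estimate the binomial coefficient by elementary means, distinguishing $c_2\le 1$ from $c_2>1$. Writing $\binom{n+c_2-1}{n}=\frac{c_2}{n}\prod_{j=1}^{n-1}(1+\tfrac{c_2}{j})$, when $c_2\le 1$ a comparison with $\prod_{j=1}^{n-1}(1+\tfrac1j)=n$ gives $\binom{n+c_2-1}{n}\le c_2$; when $c_2>1$ the inequalities $1+x\le e^x$ and $H_{n-1}\le 1+\ln n$ give $\binom{n+c_2-1}{n}\le c_2 e^{c_2}n^{c_2-1}$. In both cases, after substituting and cancelling the common factor $c_2$ and then taking logarithms (and, in the second case, using $c_2\le c_2\ln n$), the claim reduces to an inequality of the shape
\[
(n-1)\ln\tfrac1r \ge \ln\tfrac1{c_1} + \gamma\ln n + \ln 4,
\]
where $\gamma=3$ when $c_2\le 1$ and $\gamma=2c_2+2$ when $c_2>1$; in either case $\gamma\le 2c_2+4$.

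The last step is to deduce this logarithmic inequality from the two hypotheses on $n$. I would split the right-hand side, handling $\ln(1/c_1)$ at once via $n\ge 2\ln(c_1^{-1})/\ln(1/r)+1$ (which makes it $\le\tfrac12(n-1)\ln(1/r)$), so that the remaining task is to bound $\gamma\ln n+\ln 4$ by $\tfrac12(n-1)\ln(1/r)$. Setting $t:=\tfrac{2c_2+4}{\ln(1/r)}$ one has $1/\ln(1/r)\le t/4$ (since $2c_2+4\ge 4$), and dividing through by $\ln(1/r)$ and using $\ln n\ge 1$, the remaining inequality reduces to $n-1\ge \tfrac52\,t\ln n$ — the worst constant $\tfrac52$ coming from the branch $c_2\le 1$. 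This in turn follows from the hypothesis $n\ge 5(t+1)\ln(t+1)+1$ by the routine ``$n\gg t\ln t\Rightarrow n\gg t\ln n$'' bootstrap (one checks that $n-1-\tfrac52 t\ln n$ is nonnegative at the threshold value of $n$ and increasing thereafter), and this is exactly where the constant $5$ in the threshold is chosen. I would also flag one subtlety worth stating explicitly: a small $t$ forces $\ln(1/r)$ to be large, which makes the original inequality slack — so the dependence of the threshold on $t=(2c_2+4)/\ln(1/r)$, rather than a uniform numerical bound, is genuinely necessary.

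I expect the only obstacle to be bookkeeping: checking that the explicit constants (the $5$ and the $+1$ in the threshold, and the final $48$) close up \emph{uniformly} over all admissible triples — i.e.\ for $c_1$ arbitrarily close to $0$, for $c_2$ arbitrarily small, and for $r$ up to its boundary value $2^{-1/2}$. Conceptually the argument is just: apply Theorem~\ref{thm1} with $m=0$ and show the exponentially small tail is negligible.
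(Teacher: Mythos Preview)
Your proposal is correct and follows essentially the same approach as the paper: apply Theorem~\ref{thm1} with $m=0$, then show that for $n$ above the stated threshold the term $\binom{n+c_2-1}{n}\tfrac{4n^2}{c_1}r^n$ is dominated by $\tfrac{c_2 r}{n}$, yielding the constant $48=2\cdot 24$. The only notable difference is organisational: the paper avoids your case split on $c_2\le 1$ versus $c_2>1$ by using the single uniform bound $\binom{n+c_2-1}{n}n^2\le (ne)^{c_2+1}c_2/e$, and it isolates the ``$n\gg t\ln t\Rightarrow \tfrac{n-1}{\ln n+1}\ge t$'' bootstrap as a separate lemma (Lemma~\ref{lemmaforcor}) rather than sketching it inline; this is where the constant $5$ is pinned down.
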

\begin{remark}
In all of our applications, the Riemann hypothesis for function fields shows that $\beta^{-1}$ is a of the form $q^c$ for some positive integer $c$ depending on the application and $\alpha^{-1}$ is the square root of $\beta^{-1}$. Thus, $r=\beta/\alpha=q^{-c/2}$, which has two implications. First, equation \eqref{properties} is satisfied. Second, 
\begin{equation}
\lim_{q \to \infty} \frac{r}{n}=0,
\end{equation}
which by \eqref{eq:estimatethm12} shows that as $q$ tends to $\infty$, the relative error term $E$ tends to zero.
\end{remark}
\begin{remark}\label{sizeabeta}
Assumption \eqref{properties3} implies that
\begin{equation}
\begin{split}
\left| a(\beta) \right| &\ge \exp\left(\sum_{n \ge 1} \frac{-\left|\widetilde{a_n}\right| \beta^n}{n} \right)  \ge \exp\left(\sum_{n \ge 1} \frac{-c_2 r^n}{n} \right)  = (1-r)^{c_2} , \\
\left| a(\beta) \right| &\le \exp\left(\sum_{n \ge 1} \frac{\left|\widetilde{a_n}\right| \beta^n}{n} \right)  \le \exp\left(\sum_{n \ge 1} \frac{c_2 r^n}{n} \right)  = (1-r)^{-c_2}.
\end{split}
\end{equation}
In particular, we obtain the bound $a(\beta) = \exp\left( O(c_2r)\right)$.
\end{remark}
\section{Landau's theorem over \texorpdfstring{$\FF_{q}[T]$}{Fq[T]}, for \texorpdfstring{$A^2+TB^2$}{A2+TB2}}\label{land}
We begin with an overlook of the proof of Theorem \ref{landauthm}, leaving the more technical details to \S \ref{linearsubsec}. Let $G_q \subseteq \FF_q[T]$ be the set of monic polynomials of the form $A^2+TB^2$. By \cite[Thm.~2.5]{lior}, a monic polynomial $f$ is in $G_q$ if and only if the multiplicity of any monic irreducible polynomial $P$ of the form $\left(\frac{P}{T}\right)=-1$ in the factorization of $f$ is even. In other words, $G_q$ is the semigroup generated by the following set of mutually coprime polynomials:
\begin{equation}
\begin{split}
&\{ P\in \FF_q[T] \,:  P\text{ is monic irreducible satisfying } \left(\frac{P}{T} \right) = 1\} \\
& \qquad \bigcup \{ P^2 \,:  P\in \FF_q[T] \text{ is monic irreducible satisfying } \left(\frac{P}{T} \right) = -1\} \bigcup \{ T \}.
\end{split}
\end{equation}
Let $\chi_2: \FF_q[T] \to \mathbb{C}$ denote the quadratic character modulo $T$:
\begin{equation}
\chi_2(f) = \left( \frac{f}{T} \right).
\end{equation}
For any subset $S \subseteq \{ -1,0,1\}$, let $\pi_q(n;\chi_2,S)$ count the monic irreducible polynomials $P$ of degree $n$ in $\FF_q[T]$ such that $\chi_2(f)\in S$. For $s \in \{-1,0,1\}$, we write $\pi_q(n;\chi_2,s)$ for $\pi_q(n;\chi_2,\{s\})$. It follows that the generating function $F_{G_q}$ of $B(n,q)$ is
\begin{equation}\label{genfuncbnq}
\begin{split}F_{G_q}(x)&= \sum_{n\ge 0} B(n,q)x^n \\
&=\prod_{n \ge 1} (1-x^n)^{-\pi_q(n;\chi_2, \{0,1\})} (1-x^{2n})^{-\pi_q(n;\chi_2, -1)}.
\end{split}
\end{equation}
If we set 
\begin{equation}\label{gdef}
g(n) = \begin{cases} \pi_q(n;\chi_2, \{0,1\}) & 2 \nmid n, \\ \pi_q(n;\chi_2,\{0,1\}) + \pi_q(\frac{n}{2};\chi_2, -1) & 2 \mid n, \end{cases}
\end{equation}
then $F_{G_q}$ assumes the form
\begin{equation}
F_{G_q}(x) = \prod_{n \ge 1} (1-x^n)^{-g(n)}. 
\end{equation}
As before, one sees that
\begin{equation}
F_{G_q}(x) = \exp \left(\sum_{n\ge 1} \frac{\psi_{G_q}(n)x^n}{n} \right),
\end{equation}
where 
\begin{equation}
\psi_{G_q}(n)= \sum_{d \mid n} d \cdot g(d).
\end{equation}
In \S\ref{linearsubsec} we obtain a closed-form expression for $\psi_{G_q}(n)$. In particular, we find that 
\begin{equation}\label{defen}
e_n= \psi_{G_q}(n) - \frac{q^n}{2}
\end{equation}
satisfies
\begin{equation}
e_n = O(q^{\lfloor n/2 \rfloor}),
\end{equation}
and, as we saw in \eqref{eq:decompprod}, we may write $F_{G_q}$ as a product of two power series with distinct radii of convergence:
\begin{equation}\label{decompforland}
F_{G_q}(x) =  \exp\left(\sum_{n \ge 1} \frac{e_n x^n}{n} \right)\cdot (1-qx)^{-\frac{1}{2}}.
\end{equation}
Theorem \ref{thm1} with
\begin{equation}\label{chosenab}
a(x)=\exp\left(\sum_{n \ge 1} \frac{e_n x^n}{n} \right),\,  b(x)=(1-qx)^{-\frac{1}{2}}
\end{equation}
gives the asymptotics of $B(n,q)$.
\subsection{Proof of Theorem \ref{landauthm}}\label{linearsubsec}
First, we obtain an explicit formula for $e_n$.
We write the (formal) zeta function of $\mathbb{F}_{q}[T]$ and its Euler product:
\begin{equation}\label{zeta}
\begin{split}
\mathcal{Z}_{\mathbb{F}_q[T]}(u) &= \sum_{f \in \mathcal{M}_q} u^{\deg f} = (1-qu)^{-1} \\
&= \prod_{P \in \mathcal{P}_q} (1-u^{\deg P})^{-1} = \prod_{n \ge 1}(1-u^n)^{-\pi_q(n)}.
\end{split}
\end{equation}
We have $\pi_q(n) = \pi_q(n;\chi_2, \{0,1\}) + \pi_q(n;\chi_2, -1)$, and so logarithmic differentiation of \eqref{zeta} gives
\begin{equation}\label{zeta2}
q^n = \sum_{d \mid n} d \cdot \left(  \pi_q(d;\chi_2, \{0,1\}) + \pi_q(d;\chi_2, -1) \right).
\end{equation}
We consider the (formal) L-function of the quadratic character $\chi_2$:
\begin{equation}
\mathcal{L}_{\FF_q[T]}(u,\chi_2) = \sum_{f \in \mathcal{M}_q} \chi_2(f) u^{\deg f}.
\end{equation} 
On the one hand, $\mathcal{L}_{\FF_q[T]}(u,\chi_2)$ is actually the constant polynomial $1$, as half of the elements of $\FF_q^{\times}$ are squares and half are non-squares (cf. \cite[Prop.~4.3]{rosen}):
\begin{equation}\label{lfuncis1}
\mathcal{L}_{\FF_q[T]}(u,\chi_2) =1.
\end{equation}
On the other hand, since $\chi_2$ is completely multiplicative, $\mathcal{L}_{\FF_q[T]}(u,\chi_2)$ admits the following Euler product:
\begin{equation}\label{lfunceuler}
\begin{split}
\mathcal{L}_{\FF_q[T]}(u,\chi_2) &= \prod_{P\in \mathcal{P}_q:\chi_2(P)=1} (1-u^{\deg P})^{-1} \prod_{P\in \mathcal{P}_q:\chi_2(P)=-1} (1+u^{\deg P})^{-1}\\
&= \prod_{ n \ge 1} (1-u^n)^{-\pi_q(n;\chi_2, 1)} \prod_{n \ge 1} (1+u^n)^{-\pi_q(n;\chi_2, -1)}\\
&= \prod_{n \ge 1} (1-u^n)^{-\pi_q(n;\chi_2,\{0,1\})} \prod_{n \ge 1} (1+u^n)^{-\pi_q(n;\chi_2,-1)}\cdot (1-u).
\end{split}
\end{equation}
Comparing \eqref{lfuncis1} and \eqref{lfunceuler}, we have the following identity:
\begin{equation}\label{lfuncprod}
\frac{1}{1-u}  = \prod_{n \ge 1} (1-u^n)^{-\pi_q(n;\chi_2,\{0,1\})} \prod_{n \ge 1} (1+u^n)^{-\pi_q(n;\chi_2,-1)}.
\end{equation}
We take the logarithmic derivative of \eqref{lfuncprod}, and obtain the following by comparing coefficients:
\begin{equation}\label{tosimpab}
1 = \sum_{d \mid n} d \cdot \left(\pi_q(d;\chi_2,\{0,1\}) + \pi_q(d;\chi_2,-1)(-1)^{n/d}\right).
\end{equation}
Using \eqref{zeta2}, we simplify \eqref{tosimpab}:
\begin{equation}\label{tosumpowers}
\sum_{d \mid n, 2d\nmid n} d \cdot \pi_q(d;\chi_2, -1)= \frac{q^n - 1}{2}.
\end{equation}
Let $v_2(n)$ be the exponent of the greatest
power of 2 that divides $n$. To compute $\sum_{d \mid n}d \cdot \pi_q(d;\chi_2,-1)$, we plug $n,\frac{n}{2},\cdots,\frac{n}{2^{v_2(n)}}$ for $n$ in \eqref{tosumpowers} and sum:
\begin{equation}\label{bnform}
\sum_{d \mid n} d \cdot \pi_q(d;\chi_2,-1)= \sum_{i=0}^{v_2(n)} \frac{q^{n/2^i} - 1}{2}.
\end{equation}
Using \eqref{zeta2} and \eqref{bnform}, we find
\begin{equation}\label{anform}
\sum_{d \mid n} d\cdot \pi_q(d;\chi_2,\{0,1\})= \frac{q^n + 1}{2} - \sum_{i=1}^{v_2(n)} \frac{q^{n/2^i} - 1}{2}.
\end{equation}
Now we can calculate $\sum_{d \mid n} d\cdot g(d)$, using \eqref{gdef}, \eqref{bnform} and \eqref{anform}: 
\begin{equation}\label{eq:e_val}
\begin{split}
\sum_{d \mid n} d\cdot g(d) &= \sum_{d \mid n} d\cdot \pi_q(d;\chi_2,\{0,1\}) + \sum_{2\mid d \mid n} d\cdot   \pi_q(\frac{d}{2};\chi_2,-1) \\
&= \sum_{d \mid n} d\cdot \pi_q(d;\chi_2,\{0,1\}) + 2\sum_{2d' \mid n} d' \cdot \pi_q(d';\chi_2,-1)\\
&= \frac{q^n + 1}{2} - \sum_{i=1}^{v_2(n)} \frac{q^{n/2^i} - 1}{2} + 2\sum_{i=0}^{v_2(n/2)} \frac{q^{n/2^{i+1}} - 1}{2} \\
&= \frac{q^n + 1}{2} +\sum_{i=1}^{v_2(n)} \frac{q^{n/2^i} - 1}{2}.
\end{split}
\end{equation}
By \eqref{eq:e_val}, the exact value of $e_n$, as defined in \eqref{defen}, is
\begin{equation}\label{valen}
e_n = \frac{1}{2} + \sum_{i=1}^{v_2(n)} \frac{q^{n/2^i}-1}{2}.
\end{equation}
In particular, $e_n$  satisfies 
\begin{equation}\label{land_psi}
\frac{1}{2} \le e_n \le q^{\lfloor n/2 \rfloor}.
\end{equation}
We establish Theorem \ref{landauthm} with 
\begin{equation}\label{newkq}
K_q=a(q^{-1}),
\end{equation}
and show that this is the same $K_q$ defined in \eqref{eq:constantlior}. For $n=1$, Theorem \ref{landauthm} is immediate since $B(1,q)=\frac{q+1}{2}$ and, by \eqref{land_psi},
\begin{equation}\label{atag1}
\begin{split}
K_q &= a(q^{-1})= \exp\left( \sum_{n \ge 1} \frac{e_n \cdot q^{-n}}{n} \right) \\
&=\exp\left( \sum_{n \ge 1} \frac{O(q^{\lfloor n/2 \rfloor -n}) }{n} \right) = \exp \left( O\left( \frac{1}{q} \right) \right) = O(1),
\end{split}
\end{equation}
and so
\begin{equation}
B(1,q) = K_q \cdot \binom{1-\frac{1}{2}}{1} \cdot q  + O\left( 1 \right),
\end{equation}
as needed. Now assume that $n>1$. We may use Theorem \ref{thm1} with $m=1$ and the functions $a$, $b$ as defined in \eqref{chosenab}, which satisfy $a \cdot b = F_{G_q}$ as we have seen in \eqref{decompforland}. By \eqref{land_psi}, the parameters for Theorem \ref{thm1} are 
\begin{equation}\label{paramsab}
c_1=\frac{1}{2},\, c_2=1, \, \alpha = q^{-\frac{1}{2}},\, \beta=q^{-1}.
\end{equation}
We obtain the following estimate for $B(n,q)$:
\begin{equation}\label{eq:beforefinalland}
\begin{split}
\frac{B(n,q)}{\binom{n-\frac{1}{2}}{n} \cdot q^n} &= a(q^{-1}) + \frac{1}{2n-1} \frac{a'(q^{-1})}{ q } + O\left(\frac{1}{qn^2}\right).
\end{split}
\end{equation}
Theorem \ref{landauthm} follows once we show $a'(q^{-1}) = O(1)$. Note \eqref{land_psi} implies that
\begin{equation}\label{atag2}
\frac{a'(q^{-1})}{a(q^{-1})}= \sum_{n \ge 1}  e_n \cdot q^{1-n}=\sum_{n \ge 1}O(q^{1+\lfloor n/2 \rfloor -n}) = O(1).
\end{equation}
By \eqref{atag1} and \eqref{atag2}, $a'(q^{-1}) = O(1)$ is established, as needed. From \ref{eq:beforefinalland} we see that $\lim_{n\to \infty} \frac{B(n,q)}{\binom{n-\frac{1}{2}}{n}\cdot q^n} = a(q^{-1})$, while \eqref{eq:bign} implies that $\lim_{n\to \infty} \frac{B(n,q)}{\binom{n-\frac{1}{2}}{n}\cdot q^n}$ equals the constant $K_q$ defined in \eqref{eq:constantlior}. Hence $K_q$ defined in \eqref{eq:constantlior} coincides with $K_q$ defined in \eqref{newkq}, as claimed.
\begin{remark} We show that $B(n,q)$ is a polynomial in $q$ of degree $n$, and explain how to compute its first $d$ coefficients. From \eqref{genfuncbnq} and \eqref{decompforland}, we obtain
\begin{equation}\label{bnqpol}
\begin{split}
B(n,q) &= [x^n] \left( (1-qx)^{-\frac{1}{2}} \cdot \exp\left(\sum_{j \ge 1} \frac{e_j x^j}{j} \right) \right)\\
&= \sum_{i=0}^{n} q^{n-i} \binom{n-i-\frac{1}{2}}{n-i} \cdot  [x^i]\exp\left(\sum_{j \ge 1} \frac{e_j x^j}{j} \right).
\end{split}
\end{equation}
From \eqref{valen} we see that $e_n$ is a polynomial in $q$ of degree $\le \lfloor \frac{n}{2} \rfloor $. Thus, the $n$th coefficient of 
\begin{equation}
\exp\left(\sum_{j \ge 1}\frac{e_j x^j}{j}\right) = \sum_{i \ge 0} \frac{1}{i!} \left(\sum_{j \ge 1}\frac{e_j x^j}{j}\right)^i
\end{equation}
is also a polynomial in $q$ of degree at most $\lfloor \frac{n}{2} \rfloor$. Hence, for any $i \le n$,
\begin{equation}\label{nddeg}
\deg_q \left( q^{n-i} \binom{n-i-\frac{1}{2}}{n-i} \cdot  [x^i]\exp\left(\sum_{j \ge 1} \frac{e_j x^j}{j} \right) \right) \le n-i + \lfloor \frac{i}{2} \rfloor =n-\lceil \frac{i}{2} \rceil,
\end{equation}
and if $i=0$ then equality holds in \eqref{nddeg}. Hence, \eqref{nddeg} implies that $B(n,q)$ is a polynomial in $q$ of degree $n$. If one only wants the first $d$ coefficients of $B(n,q)$, then only the first $2d-1$ coefficients of $\exp\left(\sum_{j \ge 1} \frac{e_j x^j}{j} \right)$ are needed, as may be seen from the following identity, which follows from \eqref{bnqpol} and \eqref{nddeg}:
\begin{equation}
B(n,q) = \sum_{i=0}^{2d-2} q^{n-i} \binom{n-i-\frac{1}{2}}{n-i} \cdot  [x^i]\exp\left(\sum_{j \ge 1} \frac{e_j x^j}{j} \right) + O_n(q^{n-d}).
\end{equation}
\end{remark}
\subsection{The constant \texorpdfstring{$K_q$}{Kq}}\label{kqsec}
We study the constant appearing in the main term of Theorem \ref{landauthm}, $K_q$. In  \eqref{eq:constantlior}, the constant is expressed as the following Euler product:
\begin{equation}
K_q=(1-q^{-1})^{-\frac{1}{2}}\prod_{P \in \mathcal{P}_q:(\frac{P}{T})=-1}(1-q^{-2\deg P})^{-\frac{1}{2}}. 
\end{equation}
We provide another expression, which is the polynomial analogue of the following similar expression obtained in the integer setting independently in \cite{shanks, flajvardi}:
\begin{equation}\label{beautifulflaj}
K = \frac{1}{\sqrt{2}} \prod_{n \ge 1} \left( \left(1-\frac{1}{2^{2^n}}\right)\frac{\zeta(2^n)}{L(2^n,\chi)}\right)^{1/2^{n+1}},
\end{equation}
where $\zeta(s)=\sum_{n \ge 1} n^{-s}$,  $L(s,\chi)=\sum_{n\ge 1} \frac{\chi(n)}{n^s}$ and where $\chi$ is the principal character modulo 4.

Although the method of derivation of \eqref{beautifulflaj} applies \textit{mutatis mutandis} to the polynomial setting, we do it slightly differently. In the process we obtain a functional equation for $F_{G_q}(x)$.
\begin{lem}\label{lempower2}
Let $A(x) = \exp\left(\sum_{n\ge 1} \frac{a_n x^n}{n} \right)$ be a formal power series. Let 
\begin{equation*}
b_n = \begin{cases} a_n-a_{n/2} & 2 \mid n \\ a_n & 2 \nmid n \end{cases} \text{ and }B(x) = \exp\left(\sum_{n \ge 1} \frac{b_n x^n}{n} \right). 
\end{equation*}
Then 
\begin{equation}\label{poweridentity}
A(x) = \prod_{k \ge 0} B(x^{2^k})^{2^{-k}},
\end{equation}
where each of the roots $B(x^{2^k})^{2^{-k}}$ is chosen so that the constant term is $1$.

In particular,
\begin{equation}\label{secondpartiden}
\frac{A^2(x)}{A(x^2)} = B^2(x).
\end{equation}
\end{lem}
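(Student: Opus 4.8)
The plan is to take logarithms and reduce \eqref{poweridentity} to an identity between formal power series. Both $A(x)$ and each factor $B(x^{2^k})^{2^{-k}}$ on the right-hand side have constant term $1$ (being exponentials of power series with vanishing constant term, with the branch of the $2^{-k}$-th root fixed accordingly), and since $B(x^{2^k})^{2^{-k}} = 1 + O(x^{2^k})$ the infinite product converges in the $x$-adic topology. Hence \eqref{poweridentity} is equivalent, after applying $\log$, to
\[
\sum_{n \ge 1} \frac{a_n x^n}{n} \;=\; \sum_{k \ge 0} 2^{-k} \sum_{n \ge 1} \frac{b_n x^{n 2^k}}{n}.
\]
First I would rearrange the double sum on the right as a single power series in $x$: the monomial $x^m$ arises precisely from the pairs $(k,n)$ with $n 2^k = m$, i.e.\ $0 \le k \le v_2(m)$ and $n = m/2^k$, so the coefficient of $x^m$ on the right equals $\tfrac1m \sum_{k=0}^{v_2(m)} b_{m/2^k}$, while on the left it is $a_m/m$.

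It then remains to prove the elementary identity $\sum_{k=0}^{v_2(m)} b_{m/2^k} = a_m$ for every $m \ge 1$. Writing $v = v_2(m)$, the terms with $0 \le k < v$ have $m/2^k$ even, so $b_{m/2^k} = a_{m/2^k} - a_{m/2^{k+1}}$ by the definition of the $b_n$, whereas the last term $k = v$ has $m/2^v$ odd, so $b_{m/2^v} = a_{m/2^v}$. The sum telescopes, leaving only $a_{m/2^0} = a_m$. This establishes \eqref{poweridentity}.

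For the consequence \eqref{secondpartiden}, I would simply manipulate \eqref{poweridentity}. Squaring it gives $A(x)^2 = \prod_{k\ge 0} B(x^{2^k})^{2^{1-k}}$, while substituting $x \mapsto x^2$ and re-indexing with $j = k+1$ gives $A(x^2) = \prod_{j \ge 1} B(x^{2^j})^{2^{1-j}}$. Dividing the first identity by the second, every factor of index $\ge 1$ cancels and only the $k=0$ factor $B(x)^{2}$ survives, yielding $A(x)^2/A(x^2) = B(x)^2$.

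There is no serious obstacle here: the only points needing care are the bookkeeping ones — justifying the $x$-adic convergence of the infinite product and well-definedness of the chosen roots, and the legitimacy of rearranging the double sum (both routine for formal power series) — together with keeping the indices straight in the telescoping step, which I would present as the crux of the argument.
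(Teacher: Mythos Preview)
Your proof is correct and follows essentially the same approach as the paper: both pass to logarithms, collect the coefficient of $x^m$ on the right-hand side as $\tfrac{1}{m}\sum_{k=0}^{v_2(m)} b_{m/2^k}$, and then verify the telescoping identity $\sum_{k=0}^{v_2(m)} b_{m/2^k} = a_m$ from the definition of $b_n$. You spell out the telescoping and the derivation of \eqref{secondpartiden} more explicitly than the paper does, but the argument is the same.
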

\begin{proof}
Assertion \eqref{secondpartiden} follows from \eqref{poweridentity}. We have 
\begin{equation}
\begin{split}
\prod_{k \ge 0} B(x^{2^k})^{2^{-k}} &= \prod_{k \ge 0} \exp\left(\sum_{n \ge 1} \frac{b_n x^{2^k n}}{2^k n} \right)\\
& = \exp\left( \sum_{n \ge 1} \frac{ \left(\sum_{i=0}^{v_2(n)} b_{n/2^i} \right) x^n}{n} \right),
\end{split}
\end{equation}
so it suffices to show
\begin{equation}
a_n = \sum_{i=0}^{v_2(n)} b_{\frac{n}{2^i}},
\end{equation}
which follows from the definition of $b_n$.
\end{proof}
Applying Lemma \ref{lempower2} to $A = F_{G_q}$, we find
\begin{align}
\frac{F_{G_q}^2(x)}{F_{G_q}(x^2)} &= h^2(x),
\end{align}
where
\begin{equation}
h(x)=\exp\left(\sum_{n \ge 1} \left(\psi_{G_q}(n)-\underbrace{\psi_{G_q}\left(\frac{n}{2}\right)}_{=0\text{ if }2\nmid n}\right) \frac{x^n}{n}\right).
\end{equation}
Using the formula $\psi_{G_q}(n) = \frac{q^n+1}{2} +\sum_{i=1}^{v_2(n)} \frac{q^{n/2^i}-1}{2}$ (see \eqref{valen}) we find 
\begin{equation}
\psi_{G_q}(n)-\underbrace{\psi_{G_q}\left(\frac{n}{2}\right)}_{=0\text{ if }2\nmid n} = \frac{q^n-(-1)^n}{2},
\end{equation}
thus
\begin{equation}
h(x) = \sqrt{\frac{1+x}{1-qx}},
\end{equation}
and 
\begin{align}
\frac{F_{G_q}^2(x)}{F_{G_q}(x^2)} &= \frac{1+x}{1-qx}.
\end{align}
This functional equation determines $F_{G_q}(x)$ uniquely, assuming $F_{G_q}$ is a power series with $F_{G_q}(0)\neq 0$.

Applying Lemma \ref{lempower2} to $A(x)=a(x)=\exp\left(\sum_{n\ge 1} \frac{e_n x^n}{n} \right)$, we find
\begin{equation}\label{prod2}
\exp\left(\sum_{n\ge 1} \frac{e_n x^n}{n} \right) = \prod_{k \ge 0} \widetilde{h}(x^{2^k})^{2^{-k}},
\end{equation}
where 
\begin{equation}
\widetilde{h}(x)=\frac{\sqrt{1+x}}{\sqrt[4]{1-qx^2}}.
\end{equation}
Plugging $x=q^{-1}$ in \eqref{prod2} gives the following identity:
\begin{equation}\label{identitykq}
K_q = \prod_{k \ge 0} \frac{\sqrt[2^{k+1}]{1+q^{-2^k}}}{\sqrt[2^{k+2}]{1-q^{1-2^{k+1}}}} = \frac{\sqrt{1+q^{-1}}}{\sqrt[4]{1-q^{-1}}} \frac{\sqrt[4]{1+q^{-2}}}{\sqrt[8]{1-q^{-3}}} \frac{\sqrt[8]{1+q^{-4}}}{\sqrt[16]{1-q^{-7}}} \cdots,
\end{equation}
which shows that $K_q$ is an analytic function of $q^{-1}$.

Next we discuss the the analogy between \eqref{identitykq} and \eqref{beautifulflaj}. Define $|f|=q^{\deg f}$, and put 
\begin{equation}\label{zetas}
\begin{split}
\zeta_{\FF_q[T]}(s)&=\sum_{f \in \mathcal{M}_q} |f|^{-s} =\frac{1}{1-q^{1-s}},\\
L_{\FF_q[T]}(s,\chi_2)&=\sum_{f \in \mathcal{M}_q} \chi_2(f) |f|^{-s} = 1.
\end{split}
\end{equation}
One may rearrange the terms of the product in \eqref{identitykq} in a way that gives an expression which is analogous to \eqref{beautifulflaj}:
\begin{align}
K_q &= \frac{1}{\sqrt{1-q^{-1}}} \prod_{n \ge 1} \left( \left(1-\frac{1}{q^{2^n}}\right)\frac{\zeta_{\FF_q[T]}(2^n)}{L_{\FF_q[T]}(2^n,\chi_2)}\right)^{1/2^{n+1}}.
\end{align}
\subsection{Second-order term for fixed \texorpdfstring{$q$}{q}}
In the integer setting, Shanks \cite{shanks} computed numerically the coefficient $c$ appearing in the following expansion:
\begin{equation}
\sum_{n\le x} b(n) = K \frac{x}{\sqrt{\ln x}} \left(1 + \frac{c}{\ln x} + O\left(\frac{1}{\ln^2 x}\right)\right),
\end{equation}
and found that $c \approx 0.581948659$. We study a similar problem in the polynomial setting.

Equation \eqref{eq:beforefinalland} shows that in the limit $n \to \infty$, we may expand $B(n,q)$ as follows:
\begin{equation}
B(n,q) = K_q \cdot \binom{n-\frac{1}{2}}{n}  \cdot q^n \left( 1 + \frac{a'(q^{-1})}{2q  \cdot a(q^{-1})} \frac{1}{n} +  O\left(\frac{1}{qn^2}\right) \right).\end{equation}
We give a formula for the constant 
\begin{equation}
c_{q} =\frac{a'(q^{-1})}{2q  \cdot a(q^{-1})}=  \frac{1}{2}\sum_{i \ge 1} e_i q^{-i},
\end{equation}
using the exact formula \eqref{valen} for $e_n$:
\begin{equation}\label{estcq}
\begin{split}
c_q &=\frac{1}{2} \sum_{i \ge 1} e_i q^{-i} = \frac{1}{4}  \sum_{i \ge 1}\left( q^{-i} + \sum_{k=1}^{v_2(i)} \left(q^{\frac{i}{2^k}-i}-q^{-i}\right) \right)\\
&=\frac{1}{4} \left( \frac{1}{q-1} + \sum_{j \ge 1} \sum_{i' \ge 1} q^{i'(1-2^j)}-q^{-i'2^j} \right)\\
&= \frac{1}{4} \left( \frac{1}{q-1} + \sum_{j \ge 1} \left( \frac{1}{q^{2^j-1}-1} - \frac{1}{q^{2^j}-1} \right) \right)\\
\end{split}
\end{equation}
In particular, \eqref{estcq} gives the following estimate for $c_q$:
\begin{equation}
c_q= \frac{1}{2q} + O\left(\frac{1}{q^2}\right).
\end{equation}
\section{Landau's theorem over \texorpdfstring{$\FF_{q}[T]$}{Fq[T]}, for \texorpdfstring{$A^2-\alpha B^2$}{A2+B2}}\label{landab}
\subsection{Proof of Theorem \ref{thmchinintimprovquad}}
We only prove estimate \eqref{eq:b1est}, as \eqref{eq:b2est}, \eqref{eq:b3est} are proven \textit{mutatis mutandis}.

Let $q$ be a prime power. As explained in the beginning of \S\ref{furthapp}, given a non-square element $\alpha \in \FF_q$, the set $S_1(q)$ defined in \eqref{s1def} is the set of monic polynomials of the form $A^2-\alpha B^2$. By definition, $S_1(q)$ is the multiplicative semigroup of $\FF_q[T]$ generated by the following set of mutually coprime polynomials:
\begin{equation}
\begin{split}
&\{ P\in \FF_q[T] \,:  P\text{ is monic irreducible of even degree}\} \\
& \qquad \bigcup \{ P^2 \,:  P\in \FF_q[T] \text{ is monic irreducible of odd degree}\}.
\end{split}
\end{equation}
It follows that the generating function of $B_1(2n,q)$ (counting function of $S_1(q)$, defined in \eqref{b1def}) is
\begin{equation}\label{gens1}
\begin{split}F_{S_1(q)}(x)&= \sum_{n\ge 0} B_1(2n,q) x^n \\
&= \prod_{P\in \mathcal{P}_q:\,2 \mid \deg P} (1-x^{\deg P /2})^{-1} \prod_{P\in \mathcal{P}_q:\,2 \nmid \deg P} (1-x^{\deg P})^{-1} \\
&=\prod_{n \ge 1} (1-x^n)^{-\pi_q(2n)} (1-x^{2n-1})^{-\pi_q(2n-1)}.
\end{split}
\end{equation}
If we set 
\begin{equation}\label{hdef}
h(n) = \begin{cases} \pi_q(2n) + \pi_q(n) & 2 \nmid n, \\ \pi_q(2n) & 2 \mid n, \end{cases}
\end{equation}
then $F_{S_1(q)}$ assumes the form
\begin{equation}
F_{S_1(q)}(x) = \prod_{n \ge 1} (1-x^n)^{-h(n)}. 
\end{equation}
As before, one sees that
\begin{equation}
F_{S_1(q)}(x) = \exp \left(\sum_{n\ge 1} \frac{\psi_{S_1(q)}(n)x^n}{n} \right),
\end{equation}
where 
\begin{equation}\label{psis1def}
\psi_{S_1(q)}(n)= \sum_{d \mid n} d \cdot h(d).
\end{equation}
We evaluate $\psi_{S_1(q)}(n)$. Let $v_2(n)$ be the exponent of the greatest power of 2 that divides $n$. By \eqref{psis1def}, we have
\begin{equation}\label{psifeval}
\begin{split}
\psi_{S_1(q)}(n) &= \sum_{d \mid n} d \cdot\pi_q(2d) + \sum_{d \mid n,\, 2\nmid d} d \cdot \pi_d(d) \\
&= \frac{1}{2} \left( \sum_{d' \mid 2n} d' \cdot  \pi_q(d') - \sum_{d' \mid 2n,\, 2 \nmid d'} d' \cdot  \pi_q(d')\right) +\sum_{d \mid n,\, 2 \nmid d} d \cdot\pi_q(d) \\
&= \frac{1}{2} \left( \sum_{d' \mid 2n} d' \cdot  \pi_q(d') \right)+ \frac{1}{2}\left( \sum_{d' \mid n, \, 2\nmid d'} d' \cdot  \pi_q(d') \right)\\
&= \frac{1}{2} \left( \sum_{d' \mid 2n} d' \cdot  \pi_q(d') \right)+ \frac{1}{2}\left( \sum_{d' \mid \frac{n}{2^{v_2(n)}}} d' \cdot  \pi_q(d') \right).
\end{split}
\end{equation}
By \eqref{classicpsi} and \eqref{psifeval}, we find
\begin{equation}
\psi_{S_1(q)}(n) = \frac{q^{2n}}{2} + \frac{q^{\left(\frac{n}{2^{v_2(n)}}\right)}}{2}.
\end{equation}
If we define
\begin{equation}\label{deffn}
f_n= \psi_{S_1(q)}(n) - \frac{q^{2n}}{2},
\end{equation}
we see by \eqref{psifeval} that
\begin{equation}\label{valfn}
f_n = \frac{q^{\left( \frac{n}{2^{v_2(n)}}\right)}}{2} = O(q^n).
\end{equation}
As we saw in \eqref{eq:decompprod}, we may write $F_{S_1(q)}$ as a product of two power series with distinct radii of convergence:
\begin{equation}\label{decompforlandclass}
F_{S_1(q)}(x) =  \exp\left(\sum_{n \ge 1} \frac{f_n x^n}{n} \right)\cdot (1-q^2x)^{-\frac{1}{2}}.
\end{equation}
We may apply Theorem \ref{thm1} with $m=0$ and
\begin{equation}\label{chosenabclass}
a(x)=\exp\left(\sum_{n \ge 1} \frac{f_n x^n}{n} \right),\,  b(x)=(1-q^2x)^{-\frac{1}{2}}.
\end{equation}
By \eqref{valfn}, the parameters for Theorem \ref{thm1} are
\begin{equation}\label{paramsabclassicland}
c_1=\frac{1}{2},\, c_2=\frac{1}{2}, \, \alpha = q^{-1},\, \beta=q^{-2}.
\end{equation}
By Theorem \ref{thm1} we find
\begin{equation}
B_1(2n,q) = \binom{n-\frac{1}{2}}{n} \cdot q^{2n} \cdot \left(a(q^{-2}) +  O\left( \frac{1}{qn} \right) \right).
\end{equation}
This establishes \eqref{eq:b1est} with \begin{equation}\label{cq1def}
C_{q,1} = a(q^{-2}).
\end{equation}
By Remark \ref{sizeabeta}, 
\begin{equation}
C_{q,1} = 1+O\left( \frac{1}{q} \right),
\end{equation}
which concludes the proof.
\subsection{The constant \texorpdfstring{$C_{q,1}$}{Cq1}}\label{cq1sec}
We study the constant appearing in the main term of \eqref{eq:b1est}, $C_{q,1}$. First, we show how we may express it as a product over primes. By \eqref{decompforlandclass} \eqref{chosenabclass}, and \eqref{paramsabclassicland},
\begin{equation}\label{justmove}
a(x) = \frac{F_{S_1(q)}(x)}{b(x)}
\end{equation}
for any $|x| < q^{-2}$. Letting $x \to \left( q^{-2} \right)^{-}$ in \eqref{justmove} and using \eqref{cq1def}, we obtain
\begin{equation}\label{cq1form}
C_{q,1} = \lim_{x \to \left(q^{-2}\right)^{-}}\frac{F_{S_1(q)}(x)}{b(x)}.
\end{equation}
For $0<x<q^{-2}$, $b$ has the following Euler product:
\begin{equation}\label{beuler}
\begin{split}
b(x) &= (1-q^{2}x)^{-\frac{1}{2}} = (1-q\sqrt{x})^{-\frac{1}{2}} (1+q\sqrt{x})^{-\frac{1}{2}} \\
&= \mathcal{Z}_{\FF_q[T]}^{1/2}(\sqrt{x}) \cdot \mathcal{Z}_{\FF_q[T]}^{1/2}(-\sqrt{x}) \\
&= \prod_{P \in \mathcal{P}_q} (1-x^{\deg P/2})^{-1/2} \cdot \prod_{P \in \mathcal{P}_q} (1-(-1)^{\deg {P}}x^{\deg P/2})^{-1/2},
\end{split}
\end{equation}
where $\mathcal{Z}_{\FF_q[T]}(x)$ is defined in \eqref{zeta}. By considering odd-degree and even-degree primes separately, equation \eqref{beuler} shows
\begin{equation}\label{beuler2}
b(x) = \prod_{P \in \mathcal{P}_q: \, 2\mid \deg P} (1-x^{\deg P /2})^{-1}\prod_{P \in \mathcal{P}_q: \, 2\nmid \deg P} (1-x^{\deg P})^{-1/2}.
\end{equation} 
By \eqref{cq1form}, \eqref{beuler2} and the Euler product for $F_{S_1(q)}$ given in \eqref{gens1}, we obtain the following expression for $C_{q,1}$:
\begin{equation}
\begin{split}
C_{q,1} &= \lim_{x \to \left(q^{-2} \right)^{-}} \frac{\prod_{P\in \mathcal{P}_q:\,2 \mid \deg P} (1-x^{\deg P /2})^{-1} \prod_{P\in \mathcal{P}_q:\,2 \nmid \deg P} (1-x^{\deg P})^{-1}  }{\prod_{P \in \mathcal{P}_q: \, 2\mid \deg P} (1-x^{\deg P/2})^{-1} \prod_{P \in \mathcal{P}_q: \, 2\nmid \deg P} (1-x^{\deg P})^{-1/2}} \\
&= \lim_{x \to \left(q^{-2} \right)^{-}} \prod_{P\in \mathcal{P}_q : \, 2 \nmid \deg P} (1-x^{\deg P})^{-\frac{1}{2}}\\
&= \prod_{P\in \mathcal{P}_q : \, 2 \nmid \deg P} (1-q^{-2\deg P})^{-\frac{1}{2}}.
\end{split}
\end{equation}
We provide another expression for $C_{q,1}$. Applying Lemma \ref{lempower2} to $A(x)=\exp\left(\sum_{n\ge 1} \frac{f_n x^n}{n} \right)$, we find
\begin{equation}\label{prod2cq1}
\exp\left(\sum_{n\ge 1} \frac{f_n x^n}{n} \right) = \prod_{k \ge 0} B(x^{2^k})^{2^{-k}},
\end{equation}
where 
\begin{equation}
B(x) = \exp\left( \sum_{2 \nmid n} \frac{q^n x^n}{2n} \right) = \frac{\exp\left( \sum_{n \ge 1} \frac{q^n x^n}{2n} \right)}{\exp\left( \sum_{2 \mid n} \frac{q^n x^n}{2n} \right)} = \frac{(1-qx)^{-\frac{1}{2}}}{(1-q^2x^2)^{-\frac{1}{4}}} = \left( \frac{1+qx}{1-qx} \right)^{\frac{1}{4}}.
\end{equation}
Plugging $x=q^{-2}$ in \eqref{prod2cq1} and using \eqref{cq1def}, we obtain
\begin{equation}\label{identitycq1}
C_{q,1} = \prod_{k \ge 0} \left(\frac{1+q^{1-2^{k+1}}}{1-q^{1-2^{k+1}}}\right)^{-2^{-k-2}}.
\end{equation}
Next we discuss the the analogy between \eqref{identitycq1} and \eqref{beautifulflaj}. Let $\eta: \FF_q[T] \to \mathbb{C}$ be the (generalized) Dirichlet character $\eta(f) = (-1)^{\deg (f)}$, and define $|f|=q^{\deg f}$. Let $\zeta_{\FF_q[T]}$ be as in \eqref{zetas} and 
\begin{equation}
\begin{split}
L_{\FF_q[T]}(s,\eta)&=\sum_{f \in \mathcal{M}_q} \eta(f) |f|^{-s},
\end{split}
\end{equation}
which equals
\begin{equation}
L_{\FF_q[T]}(s,\eta) =\sum_{n \ge 0} (-1)^n q^{n} q^{-ns} = \frac{1}{1+q^{1-s}}.
\end{equation}
The identity \eqref{identitycq1} may be written differently as an expression which is analogous to \eqref{beautifulflaj}:
\begin{align}\label{eq:cq1alt}
C_{q,1} &= \prod_{n \ge 1} \left( \frac{\zeta_{\FF_q[T]}(2^n)}{L_{\FF_q[T]}(2^n,\eta)}\right)^{1/2^{n+1}}.
\end{align}
\subsection{Second-order term for fixed \texorpdfstring{$q$}{q}}
Consider the functions $a$, $b$ defined in \eqref{chosenabclass}. By \eqref{decompforlandclass}, the product $a \cdot b$ is the generating function of $B_1(2n,q)$. Hence, if we apply Theorem \ref{thm1} with $m=1$ to the functions $a$, $b$, we find that in the limit $n \to \infty$, we may expand $B_1(2n,q)$ as follows:
\begin{equation}
B_1(2n,q) = C_{q,1} \cdot \binom{n-\frac{1}{2}}{n}  \cdot q^{2n} \left( 1 + \frac{a'(q^{-2})}{2q^2  \cdot a(q^{-2})} \frac{1}{n} +  O\left(\frac{1}{q^2n^2}\right) \right).
\end{equation}
We give a formula for the constant $c'_{q} =\frac{a'(q^{-2})}{2q^2  \cdot a(q^{-2})}=  \frac{1}{2}\sum_{i \ge 1} f_i q^{-2i}$ using the exact formula \eqref{deffn} for $f_n$:
\begin{equation}\label{estcq1}
\begin{split}
c'_q =\frac{1}{2} \sum_{i \ge 1} f_i q^{-2i} =\frac{1}{4}  \sum_{i \ge 1} q^{\left(\frac{i}{2^{v_2(i)}}\right)-2i}.
\end{split}
\end{equation}
In particular, equation \eqref{estcq1} gives the following estimate for $c'_q$:
\begin{equation}
c'_q = \frac{1}{4q} + O\left( \frac{1}{q^3}\right).
\end{equation}

\section{Counting divisors over function fields}\label{divfunc}
Here we prove Theorem \ref{thmchinintimprov}. First, we prove an auxiliary lemma.
\subsection{Counting primes}
\begin{lem}\label{divlem}
Let $K$ be a function field with finite constant field $\FF_q$, i.e. $K$ is a finitely generated field extension of transcendence degree one over $\FF_q$ and $\FF_q$ is algebraically closed in $K$.
Fix a prime $\mathfrak{Q}$ of $K$.
For any $n\ge 1$, let 
\begin{equation}\label{pikdef}
\pi_{K,\mathfrak{Q}}(n) = \# \{ \text{Primes of degree }n \text{ in }K \}  \setminus \{ \mathfrak{Q} \}.
\end{equation}
Given integers $r \ge 2$, $\ell \ge 1$, define the following functions:
\begin{align}
\psi_{r,K}(n)&= \sum_{d \mid n} d\cdot \pi_{K,\mathfrak{Q}}(rd), \label{eq:psirkdef}\\
f_{r,\ell, K}(n)&=\pi_{K,\mathfrak{Q}}(rn) - \underbrace{\pi_{K,\mathfrak{Q}}(\frac{rn}{\ell+1})}_{=0\text{ if }\ell+1 \nmid rn},\label{eq:frellkdef}\\
\psi_{r,\ell,K}(n)&= \sum_{d \mid n} d \cdot f_{r,\ell,K}(d), \label{eq:psirellkdef}\\
\xi_{r,K}(n) &= \sum_{d \mid nr} \frac{d \cdot \pi_{K,\mathfrak{Q}}(d)}{(d,r)} \label{eq:xidef}.
\end{align}
Denote by $g_K$ the genus of $K$ and let
\begin{equation}
M_{K,\mathfrak{Q}} = \max\{ g_K, \deg \mathfrak{Q} \}.
\end{equation}
Then
\begin{align}
\psi_{r,K}(n) &= \frac{q^{rn}}{r} + O\left( \frac{M_{K,\mathfrak{Q}}}{r} q^{rn/2}\right),\label{eq:finalval}\\
\psi_{r,\ell,K}(n) &= \frac{q^{rn}}{r} + O\left( \frac{M_{K,\mathfrak{Q}}}{r} q^{rn/2}\right),\label{eq:finalval2}\\
\xi_{r,K}(n) &= \frac{q^{rn}}{r} + O\left( \frac{M_{K,\mathfrak{Q}}}{r} q^{rn/2}\right),\label{eq:finalval3}
\end{align}
where the implied constants are (at most) 16, 42 and 50, respectively.
\end{lem}
\begin{proof}
Let
\begin{equation}
\pi_{K} = \# \{ \text{Primes of degree }n \text{ in }K \}.
\end{equation}
The zeta function of $K$ is 
\begin{equation}\label{zetadef}
\zeta_{K}(u) =\sum_{D \in \text{Div}_{\ge 0}(K)} u^{\deg D} = \prod_{ n \ge 1} (1-u^n)^{-\pi_K(n)} = \prod_{ n \ge 1} (1-u^n)^{-\pi_{K,\mathfrak{Q}}(n)} \cdot (1-u^{\deg \mathfrak{Q}})^{-1}.
\end{equation}
The function $\zeta_{K}(u)$ is a rational function in $u$ of the form 
\begin{equation}\label{zetadecomp}
\zeta_K(u) = \frac{L_K(u)}{(1-u)(1-qu)},
\end{equation}
where $L_K$ is a polynomial with $L_K(0)=1$ and of degree twice the genus of $K$, $2g_K$. The Riemann hypothesis for $\zeta_K$, proved by Weil, shows that the absolute value of the roots of $L_K$ is $q^{-1/2}$ \cite[Thm.~A.7]{rosen}. Hence, taking the logarithmic derivative of \eqref{zetadef}, \eqref{zetadecomp} and equating coefficients, we find
\begin{equation}\label{riemannestk}
\sum_{d \mid n} d \cdot \pi_{K, \mathfrak{Q}}(d) = q^n + 1 - \deg \mathfrak{Q} \cdot 1_{\deg \mathfrak{Q} \mid n}+O(g_K q^{n/2}) = q^n + O\left( M_{K,\mathfrak{Q}} q^{n/2} \right),
\end{equation}
with implied constant (at most) 3. We estimate $\psi_{r,K}(n)$ as follows, using \eqref{riemannestk}:
\begin{equation}\label{eq:divpsi}
\begin{split}
\sum_{d \mid n} d \cdot \pi_{K,\mathfrak{Q}}(rd) &=  \frac{1}{r} \sum_{d \mid n} rd \cdot \pi_{K,\mathfrak{Q}}(rd)  \\
&= \frac{1}{r} \left(\sum_{d' \mid rn} d' \cdot \pi_{K,\mathfrak{Q}}(d')- \sum_{r \nmid d' \mid rn} d' \cdot \pi_{K,\mathfrak{Q}}(d')\right)\\
&= \frac{1}{r} \left(q^{rn} +O( M_{K,\mathfrak{Q}} q^{rn/2})- \sum_{r \nmid d' \mid rn} d' \cdot \pi_{K,\mathfrak{Q}}(d')\right),
\end{split}
\end{equation}
where the implied constant is $3$.
If $d'$ is an integer dividing $nr$ but not divisible by $r$, it means $d' \mid \frac{nr}{p}$ for some prime $p$ dividing $nr$. Hence, using \eqref{riemannestk}, we see that the sum $\sum_{r \nmid d' \mid nr} d' \cdot \pi_{K,\mathfrak{Q}}(d')$ is at most
\begin{equation}\label{eq:taildivisor}
\begin{split}
\sum_{r \nmid d' \mid nr} d' \cdot \pi_{K,\mathfrak{Q}}(d') &\le \sum_{p \mid nr} \sum_{d' \mid \frac{nr}{p}} d' \cdot \pi_{K,\mathfrak{Q}}(d') \\
&\le \sum_{p \mid nr} \left( q^{nr/p} +3 M_{K,\mathfrak{Q}} q^{\frac{nr}{2p}} \right) \\
&\le q^{\lfloor \frac{nr}{2} \rfloor} \left( \sum_{i \ge 0} q^{-i} \right) +3  M_{K,\mathfrak{Q}} q^{\lfloor \frac{nr}{2} \rfloor /2} \left( \sum_{i \ge 0} q^{-\frac{i}{2}} \right) \\
&\le \frac{1}{1-q^{-1}} q^{\lfloor \frac{nr}{2} \rfloor} +3  M_{K,\mathfrak{Q}} \frac{1}{1-q^{-1/2} }q^{\lfloor \frac{nr}{2} \rfloor} \\
& \le (2+ \frac{3 M_{K,\mathfrak{Q} }}{1-2^{-1/2}} )q^{\frac{nr}{2}} \le 13  M_{K,\mathfrak{Q}} q^{\frac{nr}{2}}.
\end{split}
\end{equation}
Combining \eqref{eq:divpsi} and  \eqref{eq:taildivisor}, we obtain \eqref{eq:finalval} with an implied constant $3+13=16$. We now prove \eqref{eq:finalval2}. Note that \eqref{riemannestk} implies
\begin{equation}\label{crudepik}
\pi_{K,\mathfrak{Q}}(n) \le \frac{4}{n} M_{K,\mathfrak{Q}} q^{n}.
\end{equation}
If $\ell +1 \nmid rn$, then $\psi_{r,\ell,K}(n) = \psi_{r,K}(n)$, and \eqref{eq:finalval2} is established. Otherwise, by \eqref{crudepik}, 
\begin{equation}\label{givingval2}
\begin{split}
\sum_{d \mid n,\, \ell+1 \mid rd} d \cdot \pi_{K,\mathfrak{Q}}\left(\frac{rd}{\ell+1}\right) &\le 4 M_{K,\mathfrak{Q}}\frac{\ell+1}{r} \sum_{d \mid n,\, \ell+1 \mid rd} q^{\frac{rd}{\ell+1}} \\
&\le 4 M_{K,\mathfrak{Q}} \frac{\ell+1}{r} q^{\frac{rn}{\ell+1}} \sum_{i \ge 0}q^{-i} \\
&\le 8 M_{K,\mathfrak{Q}} \frac{\ell+1}{r} q^{\frac{rn}{\ell+1}}.
\end{split}
\end{equation}
If $\ell=1$, then \eqref{givingval2} gives 
\begin{equation}\label{ell1}
\sum_{d \mid n,\, \ell+1 \mid rd} d \cdot \pi_{K,\mathfrak{Q}}\left(\frac{rd}{\ell+1}\right) \le 16\frac{M_{K,\mathfrak{Q}}}{r} q^{\frac{rn}{2}}.
\end{equation}
By \eqref{ell1} and \eqref{eq:finalval}, estimate \eqref{eq:finalval2} is established with the absolute constant 16+16=32.

If $\ell >1$, then \eqref{givingval2} gives
\begin{equation}\label{givingval22}
\sum_{d \mid n,\, \ell+1 \mid rd} d \cdot \pi_{K,\mathfrak{Q}}\left(\frac{rd}{\ell+1}\right)  \le \frac{8 M_{K,\mathfrak{Q}} }{r} \cdot rn \cdot q^{\frac{rn}{3}}.
\end{equation}
Note that for any positive $x$.
\begin{equation}\label{anyxexp}
x = \frac{6}{e\ln 2} \cdot e \cdot \frac{\ln 2}{6} x \le \frac{6}{e\ln 2}  \cdot e \cdot \exp(  (\frac{\ln 2}{6} x) - 1) = \frac{6}{e \ln 2}\cdot 2^{x/6} \le   \frac{6}{e \ln 2} \cdot  q^{x/6}.
\end{equation} 
Using \eqref{anyxexp} with $x=rn$ in \eqref{givingval22}, we obtain
\begin{equation}
\sum_{d \mid n,\, \ell+1 \mid rd} d \cdot \pi_{K,\mathfrak{Q}}\left(\frac{rd}{\ell+1}\right) \le \frac{8 \cdot  \frac{6}{e \ln 2} M_{K,\mathfrak{Q}} }{r} q^{\frac{rn}{2}}.
\end{equation}
By \eqref{ell1} and \eqref{eq:finalval}, estimate \eqref{eq:finalval2} is established with the absolute constant $16+8\cdot  \frac{6}{e \ln 2} \le 42$. We now prove \eqref{eq:finalval3}. From \eqref{eq:psirkdef} and \eqref{eq:xidef}, we have
\begin{equation}\label{diffpsixi}
\begin{split}
\left| \xi_{r,K}(n) - \psi_{r,K}(n) \right| & \le \sum_{d \mid nr, d \neq nr} \frac{d}{(d,r)} \cdot \pi_{K,\mathfrak{Q}}(d) \\
& \le \sum_{d \le \frac{nr}{3}} d \cdot \pi_{K,\mathfrak{Q}}(d) + 1_{2 \mid rn} \cdot \frac{rn/2}{(rn/2,r)} \cdot \pi_{K,\mathfrak{Q}}(rn/2).
\end{split}
\end{equation}
From \eqref{crudepik} and \eqref{anyxexp}, we obtain
\begin{equation}\label{firstpartxi}
\begin{split}
\sum_{d \le \frac{nr}{3}} d \cdot \pi_{K,\mathfrak{Q}}(d) &\le 4 M_{K,\mathfrak{Q}} \sum_{d \le \frac{nr}{3}} q^d \le 8M_{K,\mathfrak{Q}} q^{rn/3} \\
& \le 8M_{K,\mathfrak{Q}} \frac{6}{e \ln 2}\frac{1}{rn} q^{rn/2}  \le  \frac{26 M_{K,\mathfrak{Q}}}{r} q^{rn/2}.
\end{split}
\end{equation}
From \eqref{crudepik}, we also have 
\begin{equation}\label{secondpartxi}
1_{2 \mid rn} \cdot \frac{rn/2}{(rn/2,r)} \cdot \pi_{K,\mathfrak{Q}}(rn/2) \le n \cdot \frac{8}{rn} M_{K,\mathfrak{Q}} q^{rn/2} =  \frac{8M_{K,\mathfrak{Q}}}{r} q^{rn/2}.
\end{equation}
From \eqref{diffpsixi}, \eqref{firstpartxi}, \eqref{secondpartxi} and \eqref{eq:finalval}, we arrive at estimate \eqref{eq:finalval3} with the absolute implied constant $16+26+8=50$. 
\end{proof}
\subsection{Proof of Theorem \ref{thmchinintimprov}}\label{gen3func}
We first treat the case $\deg_{K} \mathfrak{Q} \nmid rn$. The divisors counted by $B_i(rn,rk)$ and $B_3(rn,r,\ell,K)$ satisfy
\begin{equation}\label{conddeg}
\deg_{\mathfrak{Q}} (D) = rn.
\end{equation}
By \eqref{degqdef}, the condition \eqref{conddeg} is equivalent to
\begin{equation}\label{newconddeg}
v_{\mathfrak{Q}}  = -\frac{rn}{\deg_{K} \mathfrak{Q}}.
\end{equation}
The right-hand side of \eqref{newconddeg} is not integral when $\deg_{K} \mathfrak{Q} \nmid rn$, hence 
\begin{equation}
B_1(rn,r,K)=B_2(rn,r,K)=B_3(rn,r,\ell,K)=0.
\end{equation}
From now on we assume 
\begin{equation}
\deg_{K} \mathfrak{Q} \mid rn.
\end{equation}
Given a divisor $D = \sum_{ \mathfrak{P} \in \mathbb{P}_K} v_{\mathfrak{P}}(D) \cdot \mathfrak{P} \in \text{Div}_{0,\mathfrak{Q}}(K)$, we define
\begin{equation}
D_{0} = \sum_{ \mathfrak{Q} \neq \mathfrak{P} \in \mathbb{P}_K} v_{\mathfrak{P}}(D) \cdot \mathfrak{P}, \quad \quad D_{\infty} = -v_{\mathfrak{Q}}(D) \cdot \mathfrak{Q}.
\end{equation}
Note that $D = D_{0} - D_{\infty}$. A divisor $D \in \text{Div}_{0,\mathfrak{Q}}(K)$ is counted by $B_1(rn,r,K) / B_2(rn,r,K) / B_3(rn,r,\ell,K)$ if and only if
\begin{equation}
D_{\infty} = \frac{rn}{\deg_{K} \mathfrak{Q}} \cdot \mathfrak{Q}
\end{equation}
and 
\begin{equation}\label{d0cond}
D_{0} \in  \left\{ \sum_{\mathfrak{Q} \neq \mathfrak{P}  \in \mathbb{P}_K} n_{\mathfrak{P}} \cdot \mathfrak{P}, n_{\mathfrak{P}} \ge 0  : n_{\mathfrak{P}} > 0 \implies \begin{cases} \frac{r}{(r,\deg_{K} \mathfrak{P})} \mid n_{\mathfrak{P}} & (B_1(rn,r,K)) \\ r \mid \deg_{K} \mathfrak{P} & (B_2(rn,r,K)) \\ r \mid \deg_{K} \mathfrak{P} \text{ and }n_{\mathfrak{P}} \le \ell & (B_3(rn,r,\ell,K)) \end{cases}\right\}.
\end{equation}
Note that condition \eqref{d0cond} does not depend on $\deg_K \mathfrak{Q}$, so we may assume $\deg_{K} \mathfrak{Q} = 1$ without loss of generality. The generating function of $\{B_1(rn,r,K)\}_{n \ge 0}$ is, by definition,
\begin{equation}
\begin{split}
G_{r,K}(x) &= \prod_{\mathfrak{P} \in \mathbb{P}_K \setminus \{ \mathfrak{Q} \}} (1-x^{\frac{\deg_K \mathfrak{P}}{(r,\deg_K \mathfrak{P})}})^{-1} \\
 & =\prod_{d \mid r} \prod_{ \substack{\mathfrak{P} \in \mathbb{P}_K \setminus \{ \mathfrak{Q} \} \\ (r,\deg_{K} \mathfrak{P}) = d}} (1-x^{\frac{\deg_K \mathfrak{P}}{d}})^{-1}\\
  & = \exp\left( \sum_{d \mid r} \sum_{m \ge 1} \frac{x^m}{m} \frac{\sum_{\substack{\mathfrak{P} \in \mathbb{P}_K \setminus \{ \mathfrak{Q} \} \\ (\deg_{K} \mathfrak{P},r) = d \\ \deg_{K} \mathfrak{P} \mid md }} \deg_{K} \mathfrak{P}}{d} \right) \\
 & = \exp\left( \sum_{m \ge 1} \frac{\xi_{r,K}(m) x^m}{m} \right).
\end{split}
\end{equation}
The generating function $F_{r,K}$ of $\{ B_2(rm,r,K)\}_{m \ge 0}$ is, by definition, 
\begin{equation}
\begin{split}
F_{r,K}(x)&=\prod_{m \ge 1}(1+x^m+x^{2m} +\cdots )^{\pi_{K,\mathfrak{Q}}(rm)}\\
&=\prod_{m \ge 1}(1-x^m)^{-\pi_{K,\mathfrak{Q}}(rm)}\\
&=\exp\left(\sum_{m \ge 1} \frac{ \psi_{r,K}(m) x^m}{m} \right),
\end{split}
\end{equation}
where $\pi_{K,\mathfrak{Q}}$ is defined in \eqref{pikdef} and $\psi_{r,K}$ is defined in \eqref{eq:psirkdef}. The generating function $F_{r,\ell,K}$ of $\{ B_3(rm,r,\ell,K) \}_{n\ge 0}$ is, by definition,
\begin{equation}\label{eq:deffrl}
\begin{split}
F_{r,\ell, K}(x)&=\prod_{m\ge 1} (1+x^{m}+\cdots + x^{m\ell})^{\pi_{K,\mathfrak{Q}}(rm)} \\
&=\prod_{m\ge 1} \left(\frac{1-x^{m(\ell + 1)}}{1-x^m}\right)^{\pi_{K,\mathfrak{Q}}(rm)}.
\end{split}
\end{equation}
We may rearrange \eqref{eq:deffrl} as follows:
\begin{equation}
F_{r,\ell,K}(x)= \prod_{m \ge 1} (1-x^m)^{-f_{r,\ell,K}(m)} = \exp\left(\sum_{m \ge 1} \frac{\psi_{r,\ell,K}(m)x^m}{m} \right),
\end{equation}
where $f_{r,\ell,K}$ and $\psi_{r,\ell,K}$ are defined in \eqref{eq:frellkdef} and \eqref{eq:psirellkdef}, respectively. 
The estimates \eqref{eq:finalval}, \eqref{eq:finalval2} and \eqref{eq:finalval3} of Lemma \ref{divlem} show that we may write $G_{r,K}$, $F_{r,K}$ and $F_{r,\ell,K}$ as
\begin{equation}\label{defa123}
\begin{split}
G_{r, K} &= a_1 \cdot b,\\
F_{r,K} &= a_2 \cdot b, \\
F_{r, \ell, K} &= a_3 \cdot b,
\end{split}
\end{equation}
where 
\begin{equation}\label{defa1a2}
\begin{split}
b(x) &= \exp\left( \sum_{m \ge 1} \frac{q^{rm} x^m}{m} \right)= (1-q^{r}x)^{-\frac{1}{r}},\\
a_1 &= \frac{G_{r,K}}{b} = \exp \left( \sum_{m\ge  1} \frac{\widetilde{a_{1,m}} x^m}{m} \right),\\
a_2 &= \frac{F_{r,K}}{b} = \exp \left( \sum_{m\ge  1} \frac{\widetilde{a_{2,m}} x^m}{m} \right),\\
a_3&= \frac{F_{r,\ell, K}}{b} = \exp \left( \sum_{m\ge  1} \frac{\widetilde{a_{3,m}} x^m}{m} \right),
\end{split}
\end{equation}
\begin{equation}\label{otermindiv}
\begin{split}
i=1,2,3: \left| \widetilde{a_{i,m}}\right| &= O\left( \frac{M_{K,\mathfrak{Q}}}{r} q^{rm/2} \right),
\end{split}
\end{equation}
where the implied constant in \eqref{otermindiv} is (at most) 50.

We apply Corollary \ref{nicer} to the pairs $(a_1,b)$, $(a_2,b)$ and $(a_3,b)$ with the following parameters, given by \eqref{defa1a2} and \eqref{otermindiv}:
\begin{equation}
c_1=\frac{1}{r},\, c_2 \le  \frac{50 M_{K,\mathfrak{Q}}}{r}, \, \alpha = q^{-r/2}, \, \beta=q^{-r}.
\end{equation}
We obtain that the estimates  \eqref{eq:b11est}, \eqref{eq:b22est} and  \eqref{eq:b33est} of Theorem \ref{thmchinintimprov} hold with an absolute implied constant, as long as 
\begin{equation}\label{uglyrange}
n \ge \max\{ 5\cdot \left( \frac{\frac{4\cdot 50 \cdot M_{K,\mathfrak{Q}}}{r}+8}{r \ln q}+1 \right) \ln  \left( \frac{\frac{4\cdot 50 \cdot M_{K,\mathfrak{Q}}}{r}+8}{r \ln q}+1 \right)+ 1, \frac{4}{\ln q} \frac{\ln r}{r } +1\},
\end{equation}
and the constants $C_{1,r,K}, C_{2,r,K}, C_{r,\ell,K}$ are given by
\begin{equation}\label{defc1c2c3}
C_{1,r,K} = a_1(q^{-r}),\quad C_{2,r,K} = a_2(q^{-r}),\quad C_{r,\ell,K} = a_3(q^{-r}).
\end{equation}
By Remark \ref{sizeabeta}, 
\begin{equation}
C_{1,r,K},C_{2,r,K},C_{r,\ell,K} = \exp\left( O\left( \frac{M_{K,\mathfrak{Q}}}{rq^{r/2}} \right) \right) = 1+O_{M_{K,\mathfrak{Q}}}\left( \frac{1}{rq^{r/2}} \right).
\end{equation}
Since
\begin{equation*}
\frac{4}{\ln q} \frac{\ln r}{r } +1  =O(1) \text{ and } \frac{\frac{4\cdot 50 \cdot M_{K,\mathfrak{Q}}}{r}+8}{r \ln q}+1  =O\left(\frac{M_{K,\mathfrak{Q}}}{r^2 \ln q} + 1\right), 
\end{equation*}
we get that the range \eqref{uglyrange} may be replaced with the range \eqref{rangeofexp} of Theorem \ref{thmchinintimprov}, as needed.
\subsection{The constants \texorpdfstring{$C_{1,r,K}, C_{2,r,K}, C_{r,\ell,K}$}{C1rK, C2rK, CrlK}}\label{secthreeconst}
Here we give an expression for the constants $C_{1,r,K}$, $C_{2,r,K}$ and $C_{r,\ell,K}$, involving a product over primes of $K$. The Euler products for the generating functions $G_{r,K},F_{r,K},F_{r,\ell,K}$ defined in \S\ref{gen3func} are given by
\begin{equation}\label{euler3func}
\begin{split}
G_{r,K}(x) &= \prod_{\mathfrak{Q} \neq \mathfrak{P} \in \mathbb{P}_K,} (1-x^{\frac{\deg_{K} \mathfrak{P}}{(r,\deg_{K} \mathfrak{P})}})^{-1},\\
F_{r,K}(x) &= \prod_{\mathfrak{Q} \neq \mathfrak{P} \in \mathbb{P}_K, r \mid \deg_{K} \mathfrak{P}} (1-x^{\deg_{K} \mathfrak{P}/r})^{-1}, \\
F_{r,\ell,K}(x) &= \prod_{\mathfrak{Q} \neq \mathfrak{P} \in \mathbb{P}_K, r \mid \deg_{K} \mathfrak{P}} \frac{1-x^{\deg_{K} \mathfrak{P}(\ell+1)/r}}{1-x^{\deg_{K} \mathfrak{P}/r}}.
\end{split}
\end{equation}
Let $K_r$ be the constant field extension of $K$ of degree $r$, and let 
\begin{equation}
\mathcal{Z}_{K_r}(u) = \prod_{\mathfrak{P} \in \mathbb{P}_{K_r}} (1-u^{\deg_{K_r} \mathfrak{P}})^{-1}
\end{equation}
be its zeta function. When $r=1$, $\mathcal{Z}_{K_r} = \mathcal{Z}_{K}$ is just the zeta function of $K$. By \cite[Lem.~8.14 and Thm.~8.15]{rosen},
\begin{equation}\label{coolzetar}
\mathcal{Z}_{K_r}(u^r) = \prod_{\omega:\omega^r = 1}\mathcal{Z}_{K}(\omega u) = \prod_{\mathfrak{P} \in \mathbb{P}_K} (1-u^{\frac{r \deg_K \mathfrak{P}}{(r,\deg_{K} \mathfrak{P})}})^{-(r,\deg_{K} \mathfrak{P})}.
\end{equation}
We separate the product in \eqref{coolzetar} into a product over primes of degree divisible by $r$ and the rest, and replace $u^r$ with $x$:
\begin{equation}\label{zkrprod}
\mathcal{Z}_{K_r}(x) = \prod_{\mathfrak{P} \in \mathbb{P}_K, r \mid \deg_{K} \mathfrak{P}} (1-x^{\frac{\deg_K \mathfrak{P}}{r}})^{-r}  \prod_{\mathfrak{P} \in \mathbb{P}_K, r \nmid \deg_{K} \mathfrak{P}} (1-x^{\frac{ \deg_K \mathfrak{P}}{(r,\deg_{K} \mathfrak{P})}})^{-(r,\deg_{K} \mathfrak{P})}.
\end{equation}
Let $\mathcal{Z}_{K_r}(x)^{1/r}$ be chosen so that the constant term is $1$. By \eqref{zkrprod},
\begin{equation}\label{zkrprodroot}
\mathcal{Z}_{K_r}(x)^{1/r} = \prod_{\mathfrak{P} \in \mathbb{P}_K, r \mid \deg_{K} \mathfrak{P}} (1-x^{\frac{\deg_K \mathfrak{P}}{r}})^{-1}  \prod_{\mathfrak{P} \in \mathbb{P}_K, r \nmid \deg_{K} \mathfrak{P}} (1-x^{\frac{ \deg_K \mathfrak{P}}{(r,\deg_{K} \mathfrak{P})}})^{-(r,\deg_{K} \mathfrak{P})/r}.
\end{equation}
Dividing the expressions in \eqref{euler3func} by \eqref{zkrprodroot}, we obtain
\begin{equation}\label{ratio3func}
\begin{split}
\frac{G_{r,K}(x)}{\mathcal{Z}_{K_r}(x)^{1/r}} &= (1-x^{\frac{\deg_{K} \mathfrak{Q}}{(r,\deg_{K} \mathfrak{Q})}}) \cdot \prod_{\mathfrak{P} \in \mathbb{P}_K, r \nmid \deg_{K} \mathfrak{P}} (1-x^{\frac{ \deg_K \mathfrak{P}}{(r,\deg_{K} \mathfrak{P})}})^{(r,\deg_{K} \mathfrak{P})/r-1}, \\
\frac{F_{r,K}(x)}{\mathcal{Z}_{K_r}(x)^{1/r}} &= \prod_{\mathfrak{P} \in \mathbb{P}_K, r \nmid \deg_{K} \mathfrak{P}} (1-x^{\frac{ \deg_K \mathfrak{P}}{(r,\deg_{K} \mathfrak{P})}})^{(r,\deg_{K} \mathfrak{P})/r} \cdot \begin{cases} 1 & r \nmid \deg_{K} \mathfrak{Q}, \\  (1-x^{\deg_{K} \mathfrak{Q}/r}) & r \mid \deg_{K} \mathfrak{Q}, \end{cases}\\
\frac{F_{r,\ell,K}(x)}{\mathcal{Z}_{K_r}(x)^{1/r}} &=\prod_{\mathfrak{P} \in \mathbb{P}_K, r \nmid \deg_{K} \mathfrak{P}} (1-x^{\frac{ \deg_K \mathfrak{P}}{(r,\deg_{K} \mathfrak{P})}})^{(r,\deg_{K} \mathfrak{P})/r} \cdot \begin{cases} 1 & r \nmid \deg_{K} \mathfrak{Q}, \\  (1-x^{\deg_{K} \mathfrak{Q}/r}) & r \mid \deg_{K} \mathfrak{Q} \end{cases} \\
& \quad \cdot \prod_{\mathfrak{Q} \neq \mathfrak{P} \in \mathbb{P}_K, r \mid \deg_{K} \mathfrak{P}} (1-x^{\deg_{K} \mathfrak{P}(\ell+1)/r}).
\end{split}
\end{equation}
From \cite[Thm.~5.9]{rosen} we know that $\mathcal{Z}_{K_r}(x)$ is a rational function of $x$ of the form
\begin{equation}\label{formofzkr}
\mathcal{Z}_{K_r}(x)=\frac{L_{K_r}(x)}{(1-q^r x)(1-x)},
\end{equation}
where $L_{K_r}$ is a polynomial of degree $2g_{K_r}= 2g_K$ satisfying 
\begin{equation}
L_{K_r}(u) = L_{K_r}\left(\frac{1}{q^r u}\right) (q^ru^2)^{g_K}
\end{equation}
and 
\begin{equation}\label{lkrval1}
L_{K_r}(q^{-r})=q^{-r \cdot g_K} L_{K_r}(1) = q^{-r \cdot g_K} h_{K_r},
\end{equation}
where $h_{K_r}$ is the class number of $K_r$.
Combining \eqref{ratio3func} and \eqref{formofzkr}, we obtain
\begin{equation}
\begin{split}
G_{r,K}(x) &= (1-q^r x)^{-1/r} \left(\frac{L_{K_r}(x)}{1-x}\right)^{1/r} \cdot (1-x^{\frac{\deg_{K} \mathfrak{Q}}{(r,\deg_{K} \mathfrak{Q})}}) \cdot \prod_{\mathfrak{P} \in \mathbb{P}_K, r \nmid \deg_{K} \mathfrak{P}} (1-x^{\frac{ \deg_K \mathfrak{P}}{(r,\deg_{K} \mathfrak{P})}})^{(r,\deg_{K} \mathfrak{P})/r-1} \\
F_{r,K}(x) &= (1-q^r x)^{-1/r} \left(\frac{L_{K_r}(x)}{1-x}\right)^{1/r} \cdot \prod_{\mathfrak{P} \in \mathbb{P}_K, r \nmid \deg_{K} \mathfrak{P}} (1-x^{\frac{ \deg_K \mathfrak{P}}{(r,\deg_{K} \mathfrak{P})}})^{(r,\deg_{K} \mathfrak{P})/r} \cdot \begin{cases} 1 & r \nmid \deg_{K} \mathfrak{Q}, \\  (1-x^{\deg_{K} \mathfrak{Q}/r}) & r \mid \deg_{K} \mathfrak{Q}, \end{cases}\\
F_{r,\ell,K}(x) &= (1-q^r x)^{-1/r} \left(\frac{L_{K_r}(x)}{1-x}\right)^{1/r} \cdot \prod_{\mathfrak{P} \in \mathbb{P}_K, r \nmid \deg_{K} \mathfrak{P}} (1-x^{\frac{ \deg_K \mathfrak{P}}{(r,\deg_{K} \mathfrak{P})}})^{(r,\deg_{K} \mathfrak{P})/r} \cdot \begin{cases} 1 & r \nmid \deg_{K} \mathfrak{Q}, \\  (1-x^{\deg_{K} \mathfrak{Q}/r}) & r \mid \deg_{K} \mathfrak{Q} \end{cases} \\
& \quad \cdot \prod_{\mathfrak{Q} \neq \mathfrak{P} \in \mathbb{P}_K, r \mid \deg_{K} \mathfrak{P}} (1-x^{\deg_{K} \mathfrak{P}(\ell+1)/r}).
\end{split}
\end{equation}
We deduce that the functions $a_1,a_2,a_3$ defined in \eqref{defa123} are given by
\begin{equation}\label{a123exp}
\begin{split}
a_1(x) &= \left(\frac{L_{K_r}(x)}{1-x}\right)^{1/r} \cdot (1-x^{\frac{\deg_{K} \mathfrak{Q}}{(r,\deg_{K} \mathfrak{Q})}}) \cdot \prod_{\mathfrak{P} \in \mathbb{P}_K, r \nmid \deg_{K} \mathfrak{P}} (1-x^{\frac{ \deg_K \mathfrak{P}}{(r,\deg_{K} \mathfrak{P})}})^{(r,\deg_{K} \mathfrak{P})/r-1},\\
a_2(x) &= \left(\frac{L_{K_r}(x)}{1-x}\right)^{1/r}  \prod_{\mathfrak{P} \in \mathbb{P}_K, r \nmid \deg_{K} \mathfrak{P}} (1-x^{\frac{ \deg_K \mathfrak{P}}{(r,\deg_{K} \mathfrak{P})}})^{(r,\deg_{K} \mathfrak{P})/r} \cdot \begin{cases} 1 & r \nmid \deg_{K} \mathfrak{Q}, \\  (1-x^{\deg_{K} \mathfrak{Q}/r}) & r \mid \deg_{K} \mathfrak{Q}, \end{cases}\\
a_3(x) &= \left(\frac{L_{K_r}(x)}{1-x}\right)^{1/r} \cdot \prod_{\mathfrak{P} \in \mathbb{P}_K, r \nmid \deg_{K} \mathfrak{P}} (1-x^{\frac{ \deg_K \mathfrak{P}}{(r,\deg_{K} \mathfrak{P})}})^{(r,\deg_{K} \mathfrak{P})/r} \cdot \begin{cases} 1 & r \nmid \deg_{K} \mathfrak{Q}, \\  (1-x^{\deg_{K} \mathfrak{Q}/r}) & r \mid \deg_{K} \mathfrak{Q} \end{cases} \\
& \quad \cdot \prod_{\mathfrak{Q} \neq \mathfrak{P} \in \mathbb{P}_K, r \mid \deg_{K} \mathfrak{P}} (1-x^{\deg_{K} \mathfrak{P}(\ell+1)/r}).
\end{split}
\end{equation}
From \eqref{a123exp} and \eqref{lkrval1}, we get that the constants $C_{1,r,K}, C_{2,r,K}, C_{r,\ell,K}$ defined in \eqref{defc1c2c3} are given by
\begin{equation}
\begin{split}
C_{1,r,K} &= a_1(q^{-r}) = q^{-g_K} \sqrt[r]{\frac{h_K}{1-q^{-r}}} \cdot (1-q^{-\frac{r \cdot\deg_{K} \mathfrak{Q}}{(r,\deg_{K} \mathfrak{Q})}}) \cdot \prod_{\mathfrak{P} \in \mathbb{P}_K, r \nmid \deg_{K} \mathfrak{P}} (1-q^{-\frac{r \cdot \deg_K \mathfrak{P}}{(r,\deg_{K} \mathfrak{P})}})^{\frac{(r,\deg_{K} \mathfrak{P})}{r}-1},\\
C_{2,r,K} &= a_2(q^{-r}) = q^{-g_K} \sqrt[r]{\frac{h_K}{1-q^{-r}}} \cdot \prod_{\mathfrak{P} \in \mathbb{P}_K, r \nmid \deg_{K} \mathfrak{P}} (1-q^{-\frac{r \cdot \deg_K \mathfrak{P}}{(r,\deg_{K} \mathfrak{P})}})^{\frac{(r,\deg_{K} \mathfrak{P})}{r}} \cdot \begin{cases} 1 & r \nmid \deg_{K} \mathfrak{Q}, \\  (1-q^{-\deg_{K} \mathfrak{Q}}) & r \mid \deg_{K} \mathfrak{Q}, \end{cases}\\
C_{r,\ell,K}&=a_3(q^{-r}) = q^{-g_K} \sqrt[r]{\frac{h_K}{1-q^{-r}}} \cdot \prod_{\mathfrak{P} \in \mathbb{P}_K, r \nmid \deg_{K} \mathfrak{P}} (1-q^{-\frac{r \cdot \deg_K \mathfrak{P}}{(r,\deg_{K} \mathfrak{P})}})^{\frac{(r,\deg_{K} \mathfrak{P})}{r}} \cdot \begin{cases} 1 & r \nmid \deg_{K} \mathfrak{Q}, \\  (1-q^{-\deg_{K} \mathfrak{Q}}) & r \mid \deg_{K} \mathfrak{Q} \end{cases} \\
& \quad \cdot \prod_{\mathfrak{Q} \neq \mathfrak{P} \in \mathbb{P}_K, r \mid \deg_{K} \mathfrak{P}} (1-q^{-(\ell + 1) \cdot \deg_{K} \mathfrak{P}}).
\end{split}
\end{equation}
\section{Polynomials with prime factors in an arithmetic progression}\label{arithprogsec}
Let $a,m \in \FF_q[T]$ be a pair of coprime polynomials with $\deg m>0$, $m$ monic. Let $G_{a,m} \subseteq \FF_q[T]$ be the set of monic polynomials whose monic prime factors lie in  the arithmetic progression $a(T) \bmod m(T)$. Let $\pi_q(n;a,m)$ count the number of monic primes of degree $n$ in the arithmetic progression $a(T) \bmod m(T)$. The generating function of $S(n;a,m)$ (counting function of $G_{a,m}$, defined in \eqref{snamdef}) is 
\begin{equation}
F_{G_{a,m}}(x) = \prod_{n \ge 1}(1-x^n)^{-\pi_q(n;a,m)}.
\end{equation}
As before, one sees that
\begin{equation}
F_{G_{a,m}}(x)= \exp \left(\sum_{n \ge 1} \frac{\psi_{G_{a,m}}(n)x^n}{n}\right),
\end{equation}
where
\begin{equation}
\psi_{G_{a,m}}(n) = \sum_{d \mid n} d \cdot \pi_q(d;a,m).
\end{equation}
We estimate $\psi_{G_{a,m}}(n)$. By a result of Wan \cite[Thm.~5.1]{wan},
\begin{align}\label{eq:wanineq}
\left| n \cdot \pi_q(n;a,m) - \frac{q^n}{\phi(m)} \right| &\le (\deg (m)+1)q^{\frac{n}{2}}.
\end{align}
Hence, 
\begin{align}\label{eq:usewanined}
\left| \psi_{G_{a,m}}(n) - \frac{q^n}{\phi(m)} \right| &\le (\deg (m)+1))q^{\frac{n}{2}} + \sum_{d \mid n, d<n} d \cdot \pi_q(d;a,m).
\end{align}
The tail $\sum_{d \mid n, d<n} d \cdot \pi_q(d;a,m)$ is easily bounded:
\begin{equation}\label{eq:tailofarith}
\begin{split}
\sum_{d \mid n, d<n} d \cdot \pi_q(d;a,m) &\le \sum_{d \mid n, d<n} d \cdot \pi_q (d)  \le \sum_{d \mid n, d<n} q^d \\
&\le q^{\frac{n}{2}} \frac{1}{1-q^{-1}} \le 2q^{\frac{n}{2}}.
\end{split}
\end{equation}
By \eqref{eq:usewanined} and \eqref{eq:tailofarith},
\begin{equation}
\left| \psi_{G_{a,m}}(n)  - \frac{q^n}{\phi(m)} \right| \le (\deg(m) +3 ) q^{\frac{n}{2}}.
\end{equation}
Hence we may write 
\begin{equation}
F_{G_{a,m}}(x) = a(x) \cdot b(x),
\end{equation}
where 
\begin{equation}
\begin{split}
a(x) &= \exp\left(\sum_{n \ge 1}\frac{\widetilde{a_n} x^n}{n} \right),\\
\left| \widetilde{a_n} \right| & \le (\deg(m)+3) q^{\frac{n}{2}},\\
b(x) &= (1-qx)^{-\frac{1}{\phi(m)}}.
\end{split}
\end{equation}
The conditions of Corollary \ref{nicer} then hold with the following parameters:
\begin{equation}\label{paramsarith}
\alpha = \frac{1}{\sqrt{q}}, \beta= \frac{1}{q}, c_1 = \frac{1}{\phi(m)} \ge q^{-\deg m},  c_2 = \deg (m) +3,
\end{equation}
and so estimate \eqref{snamest} holds with an implied constant at most 48, whenever
\begin{equation}\label{annyrange}
n \ge \max\{5\left(\frac{2(2c_2+4)}{\ln(q)}+1 \right)\ln \left(\frac{2(2c_2+4)}{\ln(q)}+1 \right) + 1, 4 \frac{\ln \left(c_1^{-1}\right)}{\ln q}+1 \}.
\end{equation}
By plugging the parameters \eqref{paramsarith} in \eqref{annyrange}, 
and replacing $\ln(q)$ with the lower bound $\ln(2)$, the range \eqref{annyrange} may be replaced with the smaller range
\begin{equation}
n \ge \max\{ 5\left( \frac{4 \deg (m) + 20 + \ln(2)}{\ln (2)} \right) \ln \left( \frac{4 \deg (m) + 20 + \ln(2)}{\ln (2)} \right)+1, 4\deg (m) + 1\}.
\end{equation}
Replacing $\frac{4 \deg (m) + 20 + \ln(2)}{\ln (2)}$ with the upper bound $6\deg (m) + 30$, Theorem \ref{semiariththm} follows.
\section{Asymptotics}\label{asympsection}
Here we prove Theorem \ref{thm1}, a theorem in analysis on which we relied in previous sections, and whose proof implies Corollary \ref{nicer}.
We break the proof into several auxiliary lemmas.
\subsection{Basic identities and inequalities}
\begin{lem}\label{lem1}
For any $c_1 \notin \mathbb{Z}$ and integers $n \ge i \ge 0$, the following identity holds:
\begin{equation}\label{magicidet}
(-1)^i\frac{\binom{-c_1}{n-i}}{\binom{-c_1}{n}} = \sum_{k=0}^{i} \frac{\binom{i}{k}\binom{k-c_1}{k}}{\binom{n+c_1-1}{k}}.
\end{equation}
\end{lem}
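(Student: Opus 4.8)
The plan is to prove the identity \eqref{magicidet} by rewriting both sides in a form where a known binomial summation (or a telescoping/recursion argument) applies. First I would clear denominators: since $\binom{-c_1}{n}$ is a fixed nonzero quantity (because $c_1 \notin \mathbb{Z}$), the claim is equivalent to
\begin{equation*}
(-1)^i \binom{-c_1}{n-i} = \binom{-c_1}{n}\sum_{k=0}^{i} \frac{\binom{i}{k}\binom{k-c_1}{k}}{\binom{n+c_1-1}{k}}.
\end{equation*}
The natural approach is induction on $i$, holding $n$ fixed (with $0 \le i \le n$). The base case $i=0$ is trivial, since both sides equal $\binom{-c_1}{n}$. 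For the inductive step I would use the Pascal-type recursion $\binom{i+1}{k} = \binom{i}{k} + \binom{i}{k-1}$ on the right-hand side, together with the basic recursion $\binom{-c_1}{n-i-1} = \binom{-c_1}{n-i} \cdot \frac{n-i}{-c_1-(n-i-1)} = -\binom{-c_1}{n-i}\cdot\frac{n-i}{n+c_1-1-i}$ relating consecutive terms on the left; the goal is to check that the extra contribution coming from the $\binom{i}{k-1}$ part of Pascal's rule, after shifting the summation index, accounts exactly for the passage from $\binom{-c_1}{n-i}$ to $\binom{-c_1}{n-i-1}$.

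Concretely, splitting the sum for $i+1$ into the $\binom{i}{k}$ piece (which by the inductive hypothesis gives $(-1)^i\binom{-c_1}{n-i}/\binom{-c_1}{n}$) and the $\binom{i}{k-1}$ piece, and reindexing $k \mapsto k+1$ in the latter, one is left to show that
\begin{equation*}
\sum_{k=0}^{i} \frac{\binom{i}{k}\binom{k+1-c_1}{k+1}}{\binom{n+c_1-1}{k+1}} = (-1)^{i+1}\frac{\binom{-c_1}{n-i-1}}{\binom{-c_1}{n}} - (-1)^{i}\frac{\binom{-c_1}{n-i}}{\binom{-c_1}{n}}.
\end{equation*}
Using $\binom{k+1-c_1}{k+1} = \binom{k-c_1}{k}\cdot\frac{-c_1}{k+1}$ and $\binom{n+c_1-1}{k+1} = \binom{n+c_1-1}{k}\cdot\frac{n+c_1-1-k}{k+1}$, the quotient $\binom{k+1-c_1}{k+1}/\binom{n+c_1-1}{k+1}$ simplifies to $\frac{-c_1}{n+c_1-1-k}\cdot\binom{k-c_1}{k}/\binom{n+c_1-1}{k}$, so the left side becomes $-c_1 \sum_{k=0}^{i} \frac{\binom{i}{k}\binom{k-c_1}{k}}{(n+c_1-1-k)\binom{n+c_1-1}{k}}$. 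Here one recognizes a telescoping structure: writing $\frac{1}{n+c_1-1-k} = \frac{1}{(n+c_1-1)\binom{n+c_1-2}{k}/\binom{n+c_1-1}{k}} \cdot (\cdots)$, or more simply using the identity $\frac{1}{(n+c_1-1-k)\binom{n+c_1-1}{k}} = \frac{1}{(n+c_1-1)}\cdot\frac{1}{\binom{n+c_1-2}{k}}$, one can re-express the sum as a multiple of $\sum_{k}\binom{i}{k}\binom{k-c_1}{k}/\binom{n+c_1-2}{k}$ — which is again of the form governed by the inductive hypothesis, now with $n$ replaced by $n-1$. Matching the two instances of the hypothesis (at $n$ and at $n-1$) then yields the stated right-hand side after routine simplification with the recursions for $\binom{-c_1}{n-i}$.

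The main obstacle I anticipate is keeping the bookkeeping of the various binomial recursions consistent — in particular verifying that the shifted sum genuinely reduces to the inductive hypothesis at $n-1$ rather than to some off-by-one variant, and handling the extremal index $k=i$ (where $\binom{i}{k-1}$ shifting could drop or add a spurious term). An alternative, perhaps cleaner, route that I would keep in reserve is a generating-function / hypergeometric argument: the right-hand side is a terminating ${}_3F_2$ (or ${}_2F_1$) evaluated at $1$, and \eqref{magicidet} should follow from a Pfaff–Saalschütz or Chu–Vandermonde type summation; if the induction proves too messy, recognizing the sum as Saalschützian and quoting the classical closed form is the fallback. Either way, no deep input is needed — the result is an elementary binomial identity, valid as a rational-function identity in $c_1$, and hence for all $c_1 \notin \mathbb{Z}$.
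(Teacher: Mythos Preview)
Your induction sketch contains a concrete slip: the recursion $\binom{k+1-c_1}{k+1} = \binom{k-c_1}{k}\cdot\frac{-c_1}{k+1}$ is false (already at $k=0$ it gives $-c_1$ instead of $1-c_1$); the correct relation is $\binom{k+1-c_1}{k+1} = \binom{k-c_1}{k}\cdot\frac{k+1-c_1}{k+1}$. With this correction the shifted sum is no longer a constant multiple of $\sum_k \binom{i}{k}\binom{k-c_1}{k}/\binom{n+c_1-2}{k}$ as you claim; rather one finds $\sum_{k}\frac{\binom{i}{k}\binom{k+1-c_1}{k+1}}{\binom{n+c_1-1}{k+1}} = \frac{1-c_1}{n+c_1-1}\sum_k \frac{\binom{i}{k}\binom{k+1-c_1}{k}}{\binom{n+c_1-2}{k}}$, which is the identity at parameters $(i,n-1,c_1-1)$, not $(i,n-1,c_1)$. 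The induction can still be closed this way, but not along the path you wrote. Your fallback, however, is exactly right and is the cleanest direct route: in Pochhammer symbols the sum is ${}_2F_1(-i,1-c_1;1-n-c_1;1)$, and Chu--Vandermonde gives $\frac{(-n)_i}{(1-n-c_1)_i}$, which equals $(-1)^i\binom{-c_1}{n-i}/\binom{-c_1}{n}$.

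The paper takes neither route. It applies the forward difference operator $\Delta f(x)=f(x+1)-f(x)$ to $f(x)=\binom{x}{n}$: Pascal's identity gives $\Delta f(x)=\binom{x}{n-1}$, hence $\Delta^i f(x)=\binom{x}{n-i}$; comparing with the general expansion $\Delta^i f(x)=\sum_{k=0}^{i}(-1)^{i-k}\binom{i}{k}f(x+k)$ at $x=-c_1$ yields $\binom{-c_1}{n-i}=\sum_k(-1)^{i-k}\binom{i}{k}\binom{k-c_1}{n}$, and the one-line simplification $(-1)^k\binom{k-c_1}{n}/\binom{-c_1}{n}=\binom{k-c_1}{k}/\binom{n+c_1-1}{k}$ finishes. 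This argument is shorter and more conceptual than either induction or the hypergeometric identification --- it \emph{explains} the identity as an iterated Pascal relation --- and it is self-contained, whereas your Chu--Vandermonde route imports a classical summation theorem.
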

\begin{proof}
Define the forward difference operator $\Delta(f)(x) := f(x+1)-f(x)$, acting on $\mathbb{C}[x]$. For $f(x) = \binom{x}{n}$ we have $\Delta(f)(x) =\binom{x}{n-1}$ (Pascal's identity $\binom{x+1}{n} - \binom{x}{n} = \binom{x}{n-1}$) and so by induction we arrive at
\begin{equation}\label{comp1diff}
\Delta(f)^{(i)}(x) = \binom{x}{n-i}.
\end{equation} 
On the other hand, we have in general
\begin{equation}\label{comp2diff}
\Delta(f)^{(i)}(x) = \sum_{k=0}^{i}\binom{i}{k}(-1)^{i-k}f(x+k).
\end{equation}
Plugging $x=-c_1$ in \eqref{comp1diff}, \eqref{comp2diff} and comparing the results, we obtain
\begin{equation} \label{diffeq}
\binom{-c_1}{n-i} = \sum_{k=0}^{i} \binom{i}{k} (-1)^{i-k} \binom{k-c_1}{n}.
\end{equation}
Dividing both sides of \eqref{diffeq} by $(-1)^i \binom{-c_1}{n}$, we obtain
\begin{equation}
(-1)^i\frac{\binom{-c_1}{n-i}}{\binom{-c_1}{n}} = \sum_{k=0}^{i} \binom{i}{k}(-1)^k\frac{\binom{k-c_1}{n}}{\binom{-c_1}{n}}.
\end{equation}
The proof concludes by observing \begin{align}
(-1)^k\frac{\binom{k-c_1}{n}}{\binom{-c_1}{n}} &= \frac{\binom{k-c_1}{k}}{\binom{n+c_1-1}{k}}.
\end{align}
\end{proof}
Before the next lemma we recall that $(x)_n$ stands for $x(x-1)\cdots (x-(n-1))$, the falling factorial.
\begin{lem}\label{lem2}
The following bound holds for any $m<i \le n$ and $c_1 \in (0,1)$:
\begin{equation}\label{diffbylem1}
\sum_{k=m+1}^{i} \frac{\binom{i}{k}\binom{k-c_1}{k}}{\binom{n+c_1-1}{k}} \le \begin{cases} \frac{(i)_{m+1}}{(n+c_1-1)_{m+1}} (i-m)  & i<n, \\ n\left( \ln (n-m)+ \frac{2}{c_1}  \right) & i=n. \end{cases}
\end{equation}
\end{lem}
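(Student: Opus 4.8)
The bound in Lemma~\ref{lem2} has two faces, and I would treat them separately. For $i<n$, the plan is to bound each summand $\frac{\binom{i}{k}\binom{k-c_1}{k}}{\binom{n+c_1-1}{k}}$ individually and then sum the $i-m$ terms trivially. The factor $\binom{k-c_1}{k}$ is the worry, since $c_1\in(0,1)$ makes it a product of the shape $\prod_{j=1}^{k}\frac{j-c_1}{j}$, each factor of which lies in $(0,1)$; hence $\binom{k-c_1}{k}\le 1$. That reduces the problem to showing $\binom{i}{k}/\binom{n+c_1-1}{k}\le (i)_{m+1}/(n+c_1-1)_{m+1}$ for $m+1\le k\le i<n$. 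Writing both binomials with falling factorials, $\binom{i}{k}/\binom{n+c_1-1}{k}=\prod_{j=0}^{k-1}\frac{i-j}{n+c_1-1-j}$, and since each ratio $\frac{i-j}{n+c_1-1-j}$ is at most $1$ (because $i\le n-1<n+c_1-1$) and is decreasing in $j$, the product over $j=0,\dots,k-1$ is maximized when $k$ is as small as allowed, i.e.\ $k=m+1$; that gives exactly $\prod_{j=0}^{m}\frac{i-j}{n+c_1-1-j}=(i)_{m+1}/(n+c_1-1)_{m+1}$. Summing $i-m$ such terms yields the first case.

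**The case $i=n$.** Here the previous argument breaks because the ratio $\frac{n-j}{n+c_1-1-j}$ can exceed... wait, it is still $\le 1$, but the term $k=n$ makes $\binom{i}{k}=\binom{n}{n}=1$ while $\binom{n+c_1-1}{n}$ is small (of order $n^{-c_1}$ up to constants), so the naive per-term bound is too weak near $k=n$. Instead I would keep the factor $\binom{k-c_1}{k}\le 1$ and estimate $\sum_{k=m+1}^{n}\binom{n}{k}/\binom{n+c_1-1}{k}$ directly. Using $\binom{n}{k}/\binom{n+c_1-1}{k}=\prod_{j=0}^{k-1}\frac{n-j}{n+c_1-1-j}=\prod_{j=0}^{k-1}\bigl(1-\frac{c_1-1}{\,\cdot\,}\bigr)$—more cleanly, $=\frac{(n-k)!\,(n+c_1-1)!}{n!\,(n+c_1-1-k)!}$—one recognizes this as $\frac{\Gamma(n+1-k+c_1)/\Gamma(n+1-k)}{\Gamma(n+c_1)/\Gamma(n+1)}$, i.e.\ comparable to $\bigl(\frac{n-k}{n}\bigr)^{c_1}$ times bounded factors. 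The plan is then to split the sum at, say, $k\le n-1$ versus $k=n$: for $k=n$ the single term contributes $1/\binom{n+c_1-1}{n}\le \frac{2}{c_1}$ or so (bounding $\binom{n+c_1-1}{n}\ge \frac{c_1}{2}$ via the product $\prod_{j=1}^{n}\frac{j-1+c_1}{j}$, whose value exceeds $c_1/n$ times something, so one actually wants $\binom{n+c_1-1}{n}\ge \frac{c_1}{n+c_1-1}\cdot(\text{stuff})$—care needed here), and for $m+1\le k\le n-1$ one bounds $\binom{n}{k}/\binom{n+c_1-1}{k}\le 1$ (since $n<n+c_1-1$ is false—$c_1<1$! so actually $n+c_1-1<n$, the ratio is $\ge 1$). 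This sign issue is the crux: because $c_1\in(0,1)$, $\binom{n+c_1-1}{k}\le\binom{n-1}{k}\le\binom{n}{k}$, so the ratios are $\ge 1$, and I cannot bound termwise by $1$.

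**The main obstacle, and how to get around it.** The real work is the $i=n$ case, and the honest route is: factor out, for $m+1\le k\le n$, the quantity $\binom{n}{k}/\binom{n+c_1-1}{k}$ as a telescoping-friendly ratio of Gamma factors, and compare the sum to an integral. Concretely, $\frac{\binom{n}{k}\binom{k-c_1}{k}}{\binom{n+c_1-1}{k}}$ simplifies—using $\binom{k-c_1}{k}=\frac{\Gamma(k+1-c_1)}{\Gamma(k+1)\Gamma(1-c_1)}$ and the reflection-free manipulation—to something like $\frac{n}{n-k}\cdot(\text{bounded})$ or more precisely an expression whose sum over $k=m+1,\dots,n-1$ is dominated by $n\sum_{k=m+1}^{n-1}\frac{1}{n-k}\le n\ln(n-m)$, and whose final term $k=n$ is $\le \frac{2n}{c_1}$ (the $2n/c_1$ absorbing the small denominator $\binom{n+c_1-1}{n}$; note $n\cdot\frac{2}{c_1}\ge\frac{2}{c_1}$ so the stated bound $n(\ln(n-m)+\frac{2}{c_1})$ has room). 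I would verify the simplification $\frac{\binom{n}{k}\binom{k-c_1}{k}}{\binom{n+c_1-1}{k}}=\frac{\Gamma(n+1)\Gamma(k+1-c_1)}{\Gamma(k+1)\Gamma(1-c_1)\Gamma(n+c_1)}\cdot\frac{\Gamma(n+c_1-k)}{\Gamma(n+1-k)}$ and then bound the last ratio $\Gamma(n+c_1-k)/\Gamma(n+1-k)\le (n-k)^{c_1-1}\le \frac{1}{n-k}\cdot(n-k)^{c_1}$ for $k<n$, with the $c_1=1$-like endpoint handled by convexity of $\ln\Gamma$; summing the resulting $\lesssim n/(n-k)$ over $k$ gives the harmonic-sum $\le n\ln(n-m)$, and the $k=n$ term is pulled out and bounded separately by $n\cdot\frac{2}{c_1}$. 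The delicate points are getting the constant in the $k=n$ term to fit under $\frac{2}{c_1}$ (which needs $\binom{n+c_1-1}{n}\ge\frac{c_1}{2}\cdot\frac{1}{n}$, true since $\binom{n+c_1-1}{n}=\frac{\Gamma(n+c_1)}{\Gamma(n+1)\Gamma(c_1)}\sim\frac{n^{c_1-1}}{\Gamma(c_1)}$ and $\Gamma(c_1)\le\frac{1}{c_1}+1\le\frac{2}{c_1}$) and making sure the harmonic sum bound $\sum_{k=m+1}^{n-1}\frac{1}{n-k}=\sum_{j=1}^{n-m-1}\frac1j\le\ln(n-m)$ lines up with the index ranges; these are routine once the Gamma-function identity is in hand.
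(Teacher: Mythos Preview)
Your plan for the case $i<n$ is exactly the paper's argument: drop $\binom{k-c_1}{k}\le 1$, write $\binom{i}{k}/\binom{n+c_1-1}{k}=(i)_k/(n+c_1-1)_k$, observe each additional factor is at most $1$ so the ratio is maximized at $k=m+1$, and sum $i-m$ copies.

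For $i=n$ your overall strategy (drop $\binom{k-c_1}{k}\le 1$, isolate $k=n$, and show the remaining terms behave like $n/(n-k)$) is also what the paper does, but your execution is considerably more roundabout than necessary and contains two slips that, together, prevent you from landing on the stated constant. First, you do not need Gamma asymptotics at all: since $c_1>0$ one has termwise $\binom{n+c_1-1}{k}\ge\binom{n-1}{k}$ for $0\le k\le n-1$, and hence
\[
\frac{\binom{n}{k}}{\binom{n+c_1-1}{k}}\le\frac{\binom{n}{k}}{\binom{n-1}{k}}=\frac{n}{n-k},
\]
which is the whole point. Second, your harmonic bound $\sum_{j=1}^{n-m-1}\frac{1}{j}\le\ln(n-m)$ is false (take $n-m=2$); the correct estimate is $\le\ln(n-m)+1$. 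The paper absorbs this extra $+1$ by proving the sharper bound $\frac{1}{\binom{n+c_1-1}{n}}=\frac{n}{c_1}\cdot\frac{1}{\binom{n+c_1-1}{n-1}}\le\frac{n}{c_1}$ for the $k=n$ term (using $\binom{n+c_1-1}{n-1}\ge\binom{n-1}{n-1}=1$), rather than your $2n/c_1$; then $1\le 1/c_1$ gives $n(\ln(n-m)+1)+n/c_1\le n(\ln(n-m)+2/c_1)$. With your looser $k=n$ bound and the corrected harmonic estimate you would get $n(\ln(n-m)+1+2/c_1)$, which overshoots. Both fixes are one-liners, so your plan is sound once you replace the Gamma detour by the elementary inequality $\binom{n+c_1-1}{k}\ge\binom{n-1}{k}$ and track the $+1$.
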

\begin{proof}
Since $\binom{k-c_1}{k} = \prod_{i=1}^{k} (1-\frac{c_1}{i}) \le 1$, 
we can bound the left-hand side of \eqref{diffbylem1} from above by
\begin{align}\label{eq:sum3cases}
\sum_{k=m+1}^{i} \frac{\binom{i}{k} \binom{k-c_1}{k}}{\binom{n+c_1-1}{k}} &\le  \sum_{k=m+1}^{i} \frac{(i)_k}{(n+c_1-1)_k} \\
& = \frac{(i)_{m+1}}{(n+c_1-1)_{m+1}} \sum_{k=m+1}^{i} \frac{(i-m-1)_{k-m-1}}{(n+c_1-m-2)_{k-m-1}}. \label{eq:sum3cases2}
\end{align}
The case $i<n$ follows by observing that each of the terms in the sum in \eqref{eq:sum3cases2} is at most $1$.

For the case $i=n$, note that the right-hand side of \eqref{eq:sum3cases} is equal to
\begin{equation}
\label{in}\sum_{k=m+1}^{n} \frac{\binom{n}{k}}{\binom{n+c_1-1}{k}}.
\end{equation}
The term $k=n$ in \eqref{in} gives $\frac{\binom{n}{k}}{\binom{n+c_1-1}{k}} = \frac{n}{c_1} \frac{1}{\binom{n+c_1-1}{n-1}} \le \frac{n}{c_1}$. The sum of the rest of the terms is bounded from above by
\begin{equation}
\sum_{k=m+1}^{n-1} \frac{\binom{n}{k}}{\binom{n-1}{k}} = \sum_{k=m+1}^{n-1} \frac{n}{n-k} =  n \sum_{i=1}^{n-m-1} \frac{1}{i} \le  n \left(\ln (n-m) + 1 \right) \le n\left(\ln (n-m) + \frac{1}{c_1} \right),
\end{equation}
as needed.
\end{proof}
\begin{lem}\label{lem3} Let $a(x)=\exp\left(\sum_{n \ge 1} \frac{\widetilde{a_n} x^n}{n} \right)=\sum a_n x^n$ be a power series satisfying \eqref{properties3} for some $\alpha, c_2 >0$. We have the following for any $i \ge 0$ and any $0 \le x < \alpha$:
\begin{equation}
\begin{split}
\left| a^{(i)}(x) \right|  &\le \alpha^{-i} (c_2+i-1)_i (1-\frac{x}{\alpha})^{-c_2-i}.
\end{split}
\end{equation}
\end{lem}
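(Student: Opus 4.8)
The plan is to dominate the coefficients of $a$ in absolute value by those of an explicit ``majorant'' series and then differentiate that majorant. First I would introduce
\[
A(x) = \exp\left(\sum_{n \ge 1} \frac{c_2 x^n}{n \alpha^n}\right) = \exp\left(-c_2 \ln\left(1-\frac{x}{\alpha}\right)\right) = \left(1-\frac{x}{\alpha}\right)^{-c_2} = \sum_{n \ge 0} A_n x^n,
\]
noting that $A_n \ge 0$ for all $n$ since $c_2 > 0$. The heart of the argument is the claim that $|a_n| \le A_n$ for every $n \ge 0$. This follows from a general principle: if $g(x) = \sum_{n \ge 1} g_n x^n$ and $G(x) = \sum_{n \ge 1} G_n x^n$ have zero constant term and satisfy $|g_n| \le G_n$ for all $n \ge 1$, then the coefficients of $\exp(g)$ are dominated in absolute value by those of $\exp(G)$. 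Indeed, for each fixed $k$ one has $[x^m]\,g(x)^k = \sum_{n_1 + \cdots + n_k = m} g_{n_1}\cdots g_{n_k}$, whose absolute value is at most $\sum_{n_1 + \cdots + n_k = m} G_{n_1}\cdots G_{n_k} = [x^m]\,G(x)^k$ by the triangle inequality; summing against $1/k!$ over $k$ (only the finitely many $k \le m$ contribute, as $g,G$ vanish at $0$) gives $\bigl|[x^m]\exp(g)\bigr| \le [x^m]\exp(G)$. Applying this with $g(x) = \sum_{n \ge 1} \frac{\widetilde{a_n}}{n} x^n$ and $G(x) = \sum_{n \ge 1} \frac{c_2}{n\alpha^n} x^n$ — whose coefficientwise domination is precisely hypothesis \eqref{properties3} — yields $|a_n| \le A_n$, and in particular confirms that $a$ has radius of convergence at least $\alpha$.

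Next I would differentiate term by term. Since $a(x) = \sum_{n \ge 0} a_n x^n$ converges for $|x| < \alpha$, it is analytic there with $a^{(i)}(x) = \sum_{n \ge i} n(n-1)\cdots(n-i+1)\, a_n x^{n-i}$, and likewise for $A$. For $0 \le x < \alpha$, combining $|a_n| \le A_n$ with the nonnegativity of $x$ and of $A_n$ gives
\[
\left|a^{(i)}(x)\right| \le \sum_{n \ge i} n(n-1)\cdots(n-i+1)\, A_n x^{n-i} = A^{(i)}(x).
\]
Finally I would compute $A^{(i)}$ in closed form: from $A(x) = (1-x/\alpha)^{-c_2}$ one gets $A'(x) = \frac{c_2}{\alpha}(1-x/\alpha)^{-(c_2+1)}$, and an immediate induction on $i$ gives
\[
A^{(i)}(x) = \frac{c_2(c_2+1)\cdots(c_2+i-1)}{\alpha^i}\left(1-\frac{x}{\alpha}\right)^{-(c_2+i)} = \alpha^{-i}\,(c_2+i-1)_i \left(1-\frac{x}{\alpha}\right)^{-c_2-i},
\]
which is exactly the asserted bound (the case $i=0$ being the empty product).

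I expect the only delicate point to be the coefficient-domination principle for $\exp$; once it is in place the rest is bookkeeping. The remaining technicalities — justifying termwise differentiation and the interchange of summation with absolute values — are routine, as all series in sight converge absolutely and locally uniformly on the disk $|x| < \alpha$.
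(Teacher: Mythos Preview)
Your proof is correct and follows essentially the same approach as the paper: introduce the majorant $G(x)=(1-x/\alpha)^{-c_2}$, argue that $|a^{(i)}(x)|\le G^{(i)}(x)$ for $0\le x<\alpha$, and then compute $G^{(i)}$ explicitly. The paper states the domination $|a^{(i)}(x)|\le G^{(i)}(x)$ as a one-line consequence of \eqref{properties3}, whereas you spell out the underlying coefficient-majorization principle for $\exp$; your version is simply a more detailed rendering of the same argument.
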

\begin{proof}
Property \eqref{properties3} allows us to bound $|a^{(i)}(x)|$ from above by $|G^{(i)}(x)|$ for $0 \le x < \alpha$, where 
\begin{equation}
G(x) = \exp\left(\sum_{n \ge 1} \frac{c_2 \alpha^{-n} x^n}{n}\right) = \exp\left( -c_2 \log (1-\frac{x}{\alpha})\right) = (1-\frac{x}{\alpha})^{-c_2}.
\end{equation}
By repeated differentiation and using $(-c_2)_i = (c_2+i-1)_i (-1)^i$, we find 
\begin{equation}
|G^{(i)}(x)| = \alpha^{-i} (c_2+i-1)_i (1-\frac{x}{\alpha})^{-c_2-i},
\end{equation}
as needed.
\end{proof}
\begin{lem}\label{lemmaforcor}
Let $t >0$. If $n$ is a positive integer satisfying 
\begin{equation}\label{condineq}
n \ge 1+5\left(t + 1\right) \ln \left( t+ 1\right),
\end{equation}
then the inequality 
\begin{equation}\label{ineqnt}
\frac{n-1}{\ln(n)+1} \ge t
\end{equation}
holds.
\end{lem}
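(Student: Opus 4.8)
The plan is to convert this two–variable inequality into a one–variable calculus statement: first use monotonicity to remove $n$, and then linearise the nested logarithm so that only an elementary comparison survives. The starting observation is that $\phi(x):=\frac{x-1}{\ln x+1}$ is increasing on $[1,\infty)$, since a direct computation gives $\phi'(x)=\frac{\ln x+1/x}{(\ln x+1)^2}$, which is positive for $x\ge 1$. Because $t>0$ forces $5(t+1)\ln(t+1)>0$, hypothesis \eqref{condineq} gives $n\ge 1+5(t+1)\ln(t+1)\ge 1$, so monotonicity yields $\phi(n)\ge\phi\bigl(1+5(t+1)\ln(t+1)\bigr)$; hence it suffices to prove $\phi\bigl(1+5(t+1)\ln(t+1)\bigr)\ge t$. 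Writing $M=5(t+1)\ln(t+1)$, this is exactly $M\ge t\bigl(\ln(1+M)+1\bigr)$.

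Next I would linearise $\ln(1+M)$. From $\ln(t+1)<t+1$ one gets $M<5(t+1)^2$, and since $1<(t+1)^2$ this gives $1+M<6(t+1)^2$, so $\ln(1+M)<\ln 6+2\ln(t+1)$. It therefore suffices to show $M\ge t\bigl(\ln 6+2\ln(t+1)+1\bigr)$; substituting $M=5(t+1)\ln(t+1)$ and collecting the $\ln(t+1)$ terms, this reduces to $(3t+5)\ln(t+1)\ge(1+\ln 6)\,t$, and since $\ln 6<2$ it is implied by $(3t+5)\ln(t+1)\ge 3t$ for $t>0$. For this last inequality I would set $g(t)=(3t+5)\ln(t+1)-3t$, note $g(0)=0$, and compute $g'(t)=3\ln(t+1)+\frac{3t+5}{t+1}-3=3\ln(t+1)+\frac{2}{t+1}>0$ for $t>-1$; hence $g$ is increasing, so $g(t)>0$ for $t>0$, which closes the argument.

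The only delicate point is the second step. The constant $5$ in \eqref{condineq} is essentially sharp, so the bound on $\ln(1+M)$ cannot be wasteful: linearising through $\ln(t+1)<t+1$ (rather than through a cruder power estimate) produces exactly a $2\ln(t+1)$ contribution, which $5(t+1)\ln(t+1)$ absorbs with room to spare, leaving the comfortable comparison $(3t+5)\ln(t+1)\ge(1+\ln 6)\,t$ that follows from the monotonicity of $g$. Everything else — the monotonicity of $\phi$, the reduction to the boundary case of \eqref{condineq}, and the final sign check — is routine; note in particular that integrality of $n$ is not needed, the same proof working for any real $n\ge 1+5(t+1)\ln(t+1)$.
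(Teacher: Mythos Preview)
Your proof is correct and follows the same overall strategy as the paper: reduce via monotonicity of $\phi(x)=\frac{x-1}{\ln x+1}$ to the boundary value $n=1+5(t+1)\ln(t+1)$, then control $\ln(1+M)$ through the estimate $\ln(t+1)<t+1$. Where you diverge is in the endgame. The paper splits into the cases $t\le 4$ and $t\ge 4$: for small $t$ it bounds the denominator numerically at $t=4$, and for large $t$ it uses $1+M\le 2M\le 10(t+1)^2$ followed by an explicit computation. You instead use the uniform bound $1+M<6(t+1)^2$ valid for all $t>0$, and then close with the single calculus check $g(t)=(3t+5)\ln(t+1)-3t\ge 0$ via $g(0)=0$, $g'(t)=3\ln(t+1)+\frac{2}{t+1}>0$. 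Your route is a bit cleaner in that it avoids the case split entirely; the paper's route is slightly more bare-handed, relying only on numerical evaluations rather than a derivative sign check. One minor quibble with your closing commentary: the constant $5$ in \eqref{condineq} is not ``essentially sharp'' (asymptotically any constant exceeding $1$ would suffice), but this is a side remark and does not affect the validity of your argument.
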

\begin{proof}
Let $f(x)=\frac{x-1}{\ln(x)+1}$. Note $f$ is monotone increasing on $[1,\infty)$:
\begin{equation}
f'(x) = \frac{\ln(x)+\frac{1}{x}}{(\ln (x)+1)^2} > 0.
\end{equation}
Thus, to prove \eqref{ineqnt}, it is enough to show that
\begin{equation}\label{ineqnt2}
f(1+5(t+1)\ln(t+1)) \ge t,
\end{equation}
that is,
\begin{equation}\label{lnineq}
\frac{5(t+1)\ln(t+1)}{\ln(1+5(t+1)\ln(t+1))+1} \ge t.
\end{equation}
Let
\begin{equation}
g(t)=(t+1)\ln(t+1).
\end{equation}
Note that $g(t)-t$ is monotone increasing for $t>0$, since
\begin{equation}\label{mon}
\text{for all } t > 0: \, (g(t)-t)' = \ln(t+1) > 0.
\end{equation}
In particular, \eqref{mon} implies
\begin{equation}\label{gined}
\text{for all } t \ge 0: \,g(t) \ge t + g(0)=t
\end{equation}
and $g(t)$ is monotone increasing in $[0,\infty)$.
We split the proof of \eqref{lnineq} into two cases. If $t \le 4$, then by \eqref{gined} we obtain
\begin{equation}
\frac{5(t+1)\ln(t+1)}{\ln(1+5(t+1)\ln(t+1))+1} \ge\frac{5t}{\ln(1+5(4+1)\ln(4+1))+1} \ge t.
\end{equation}
If $t \ge 4$, then $5(t+1)\ln(t+1) \ge 1$. In particular, using $\ln(t+1) \le t < t+1$, we obtain the following when $t \ge 4$:
\begin{equation}
\begin{split}
\frac{5(t+1)\ln(t+1)}{\ln(1+5(t+1)\ln(t+1))+1} &\ge \frac{5t\ln(t+1)}{\ln(10(t+1)\ln(t+1))+1} \\
&\ge \frac{5t\ln(t+1)}{\ln(10(t+1)^2)+1} = 5t \left( \frac{1}{2} - \frac{\ln(10) + 1}{4\ln(t+1) + 2\ln(10)+2} \right)  \\
&\ge 5t \left( \frac{1}{2} - \frac{\ln(10) + 1}{4\ln(4+1) + 2\ln(10)+2} \right) \ge t,
\end{split}
\end{equation}
as needed.
\end{proof}
\subsection{Main sum inequality}
\begin{lem}\label{sumlem}
Let $a(x)=\exp\left(\sum_{n \ge 1} \frac{\widetilde{a_n} x^n}{n} \right)=\sum a_n x^n$ be a power series satisfying \eqref{properties3} for some $\alpha, c_2 > 0$. Let $\beta>0$ be a real number such that $r=\beta/\alpha$ satisfies \eqref{properties}. Let $0\le m<n$ and define
\begin{equation}\label{s2def}
S_1 = \sum_{i=m+1}^{n} \beta^{i} \left( \sum_{k=m+1}^{i} \frac{\binom{i}{k}\binom{k-c_1}{k}}{\binom{n+c_1-1}{k}} \right) \frac{a^{(i)}(0)}{i!}.
\end{equation}
Then
\begin{equation}\label{mainsum1}
\left| S_1 \right|  \ll_m  \binom{n+c_2-1}{n} r^n \left(n\ln (n)+ \frac{2n}{c_1} \right) + \left(\frac{r}{n} \right)^{m+1} (c_2+m)_{m+1} (1-r)^{-c_2} (1+c_2 r).
\end{equation}
For $m=0$, the implicit constant in \eqref{mainsum1} is (at most) 24.
\end{lem}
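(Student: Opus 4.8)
The plan is to peel off the single summand $i=n$ and handle it via the $i=n$ branch of Lemma~\ref{lem2}, then to bound the remaining terms $\sum_{i=m+1}^{n-1}(\cdots)$ using the $i<n$ branch of Lemma~\ref{lem2} together with an \emph{exact} evaluation of the resulting power series, obtained by repeatedly differentiating $(1-t)^{-c_2}$. Throughout write $a_i=[x^i]a(x)=a^{(i)}(0)/i!$. Comparing $a$ coefficientwise with $(1-x/\alpha)^{-c_2}=\exp\bigl(\sum_{n\ge 1}c_2\alpha^{-n}x^n/n\bigr)$ — equivalently, applying Lemma~\ref{lem3} at $x=0$ — hypothesis \eqref{properties3} gives
\[
|a_i|\le\binom{i+c_2-1}{i}\alpha^{-i},\qquad\text{so}\qquad \beta^i|a_i|\le\binom{i+c_2-1}{i}r^i.
\]
Since $c_1\in(0,1)$, every factor $\binom{i}{k}$, $\binom{k-c_1}{k}$, $\binom{n+c_1-1}{k}$ occurring in \eqref{s2def} is positive, so $|S_1|$ is at most the sum defining $S_1$ with $a^{(i)}(0)/i!$ replaced by $|a_i|$.

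Denote the $i=n$ term of $S_1$ by $T_n$. The $i=n$ case of Lemma~\ref{lem2} bounds its inner sum by $n\bigl(\ln(n-m)+2/c_1\bigr)\le n\ln n+2n/c_1$, and with $\beta^n|a_n|\le\binom{n+c_2-1}{n}r^n$ this gives
\[
|T_n|\le\binom{n+c_2-1}{n}r^n\Bigl(n\ln n+\tfrac{2n}{c_1}\Bigr),
\]
exactly the first summand on the right of \eqref{mainsum1}, with constant $1$.

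Put $S_1'=S_1-T_n$; note $S_1'$ is empty unless $n\ge m+2$. The $i<n$ case of Lemma~\ref{lem2} yields
\[
|S_1'|\le\frac{1}{(n+c_1-1)_{m+1}}\sum_{i\ge m+1}(i)_{m+1}(i-m)\binom{i+c_2-1}{i}r^i.
\]
Differentiating $(1-t)^{-c_2}=\sum_{i\ge 0}\binom{i+c_2-1}{i}t^i$ exactly $j$ times gives $\sum_{i\ge j}(i)_j\binom{i+c_2-1}{i}t^i=t^j(c_2+j-1)_j(1-t)^{-c_2-j}$; using the cases $j=m+1,m+2$ together with $(i)_{m+1}(i-m)=(i)_{m+2}+(i)_{m+1}$ and $(c_2+m+1)_{m+2}=(c_2+m+1)(c_2+m)_{m+1}$, the series collapses to
\[
\sum_{i\ge m+1}(i)_{m+1}(i-m)\binom{i+c_2-1}{i}r^i=r^{m+1}(c_2+m)_{m+1}(1-r)^{-c_2-m-2}\bigl(1+c_2r+mr\bigr),
\]
the key point being that $r(c_2+m+1)(1-r)^{-1}+1$ telescopes into $(1+c_2r+mr)(1-r)^{-1}$ (for $m=0$ simply $1+c_2r$). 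By \eqref{properties}, $(1-r)^{-m-2}\le(2+\sqrt2)^{m+2}$ and $1+c_2r+mr\le(1+m)(1+c_2r)$. For the denominator: if $n\ge 2m+2$ then each of the $m+1$ factors $n+c_1-1-j$ exceeds $n-1-m\ge n/2$, so $(n+c_1-1)_{m+1}>(n/2)^{m+1}$; if $m+2\le n\le 2m+1$ (possible only for $m\ge 1$) then $n$ lies in a finite range and $(n+c_1-1)_{m+1}>(n-1)_{m+1}\ge 1$; either way $1/(n+c_1-1)_{m+1}\le C_m\,n^{-(m+1)}$ with $C_m=\max\{2^{m+1},(2m+1)^{m+1}\}$, and for $m=0$ precisely $1/(n+c_1-1)\le 2/n$ (as $n\ge 2$).

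Combining the three displayed bounds gives $|S_1'|\ll_m (r/n)^{m+1}(c_2+m)_{m+1}(1-r)^{-c_2}(1+c_2r)$, and $|S_1|\le|T_n|+|S_1'|$ yields \eqref{mainsum1}. For $m=0$ the coefficient of the second summand works out to $2\,(2+\sqrt2)^2=12+8\sqrt2<24$, while $T_n$ contributes the first summand with coefficient $1$, so the implicit constant is at most $24$. The one genuinely non-routine step is spotting the exact collapse of $\sum_{i\ge m+1}(i)_{m+1}(i-m)\binom{i+c_2-1}{i}r^i$ into a single term carrying the factor $1+c_2r$; a generic polynomial-in-$i$ bound for this sum would destroy the shape of the second summand and would not give a constant independent of $c_2$. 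The remaining delicate point is the small-$n$ bookkeeping for $(n+c_1-1)_{m+1}$, which works out precisely because $S_1'$ is empty when $n=m+1$ — exactly the configuration in which a spurious $1/c_1$ could otherwise creep in.
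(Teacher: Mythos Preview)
Your proof is correct and follows essentially the same strategy as the paper: split off the $i=n$ term and bound it via the $i=n$ branch of Lemma~\ref{lem2}, then bound the remaining sum using the $i<n$ branch, extend to an infinite series, and evaluate that series exactly by differentiating $(1-t)^{-c_2}$. The only cosmetic differences are that the paper computes the series via the operator $f\mapsto xf'-mf$ rather than your identity $(i)_{m+1}(i-m)=(i)_{m+2}+(i)_{m+1}$ (these are equivalent), and the paper is less explicit than you about the small-$n$ case analysis for $(n+c_1-1)_{m+1}$; the $m=0$ constant $24$ is reached the same way.
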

\begin{proof}
We split $S_1=S'+S''$ into two parts, according to $i=n$ and $i<n$: 
\begin{align}
S' & = \beta^{n} \left( \sum_{k=m+1}^{n} \frac{\binom{n}{k}\binom{k-c_1}{k}}{\binom{n+c_1-1}{k}} \right) \frac{a^{(n)}(0)}{n!},\\
S'' &= \sum_{i=m+1}^{n-1} \beta^{i} \left( \sum_{k=m+1}^{i} \frac{\binom{i}{k}\binom{k-c_1}{k}}{\binom{n+c_1-1}{k}} \right) \frac{a^{(i)}(0)}{i!}.
\end{align}
By Lemmas \ref{lem2} and \ref{lem3}, we obtain
\begin{equation}\label{s2_1}
\begin{split}
\left| S' \right| &\le \beta^{n} \left(n\ln (n)+ \frac{2n}{c_1} \right) \frac{\alpha^{-n} (c_2+n-1)_n}{n!} = \binom{n+c_2-1}{n} r^n \left(n\ln (n)+ \frac{2n}{c_1} \right).
\end{split}
\end{equation}
If $m=n-1$, we have $S''=0$. Otherwise, Lemmas \ref{lem2} and \ref{lem3} give
\begin{equation}\label{withdenom}
\begin{split}
\left| S'' \right| &\le \frac{1}{(n+c_1-1)_{m+1}} \sum_{i=m+1}^{n-1} \beta^{i} (i)_{m+1}(i-m) \alpha^{-i} \binom{c_2+i-1}{i}\\
& = \frac{1}{(n+c_1-1)_{m+1}} \sum_{i=m+1}^{n-1} r^{i} (i)_{m+1}(i-m)  \binom{c_2+i-1}{i}.
\end{split}
\end{equation}
We bound the sum in \eqref{withdenom} from above by the infinite series
\begin{equation}\label{seriescomp}
\sum_{i=m+1}^{\infty} r^{i} (i)_{m+1}(i-m) \binom{c_2+i-1}{i}.
\end{equation}
We can compute \eqref{seriescomp}. We differentiate the identity
\begin{equation}
\sum_{i \ge 0} (-x)^i \binom{c_2+i-1}{i}=(1+x)^{-c_2}
\end{equation}
$m+1$ times, multiply by $x^{m+1}$ and obtain
\begin{equation}\label{toapplyop}
\sum_{i \ge m+1} (-x)^i (i)_{m+1} \binom{c_2+i-1}{i} = (-c_2)_{m+1} x^{m+1}(1+x)^{-c_2-m-1}.
\end{equation}
We apply the linear operator $f \mapsto x\cdot f'-m\cdot f$ to both sides of \eqref{toapplyop}:
\begin{equation}\label{afteroper}
\sum_{i \ge m+1} (-x)^i  (i)_{m+1} (i-m) \binom{c_2+i-1}{i} = (-c_2)_{m+1} x^{m+1}(1+x)^{-c_2-m-1} (1-(c_2+m+1)\frac{x}{1+x}).
\end{equation}
We plug $x=-r$ in \eqref{afteroper} and find the following formula for \eqref{seriescomp}:
\begin{equation}\label{eqimsum}
(-c_2)_{m+1}(-r)^{m+1} (1-r)^{-c_2-m-1} \left(1+\frac{r}{1-r}(c_2+m+1)\right).
\end{equation}
Dividing \eqref{eqimsum} by the denominator appearing in \eqref{withdenom}, we obtain the following upper bound on $\left| S'' \right|$:
\begin{equation}\label{almosts2_2}
\left| S'' \right| \le \frac{(c_2+m)_{m+1}}{(n+c_1-1)_{m+1}} \left(\frac{r}{1-r} \right)^{m+1}  (1-r)^{-c_2} \left(1+\frac{r}{1-r}(c_2+m+1)\right).
\end{equation}
Using \eqref{properties} and the estimate $(n+c_1-1)_{m+1} = \Omega_m(n^{m+1})$, we may simplify \eqref{almosts2_2} as
\begin{equation}\label{s2_2}
\left| S'' \right| \ll_m \left(\frac{r}{n} \right)^{m+1} (c_2+m)_{m+1} (1-r)^{-c_2} (1+c_2 r).
\end{equation} 
By combining \eqref{s2_1} and \eqref{s2_2}, we establish \eqref{mainsum1}.

Now we assume that $m=0$. If $n=1$, we have $S''=0$. Otherwise, plugging $m=0$ in \eqref{almosts2_2}, we obtain
\begin{equation}\label{summ0case}
\left| S'' \right| \le \frac{c_2r}{n} (1-r)^{-c_2} \frac{n}{n+c_1-1} \frac{1+c_2 r }{(1-r)^2}.
\end{equation}
Since $\frac{n}{n+c_1-1} \le 2$ and \eqref{properties} implies that $\frac{1+c_2r}{(1-r)^2} \le \frac{1+c_2r}{(1-\frac{1}{\sqrt{2}})^2} \le 12 (1+c_2 r)$, equation \eqref{summ0case} implies that
 \begin{equation}\label{implicitsumm0}
\left| S'' \right| \le 24\frac{c_2r}{n} (1-r)^{-c_2} (1+c_2 r).
\end{equation}
By \eqref{s2_1} and \eqref{implicitsumm0}, we find that the implicit constant in \eqref{mainsum1} is at most 24, as needed. 
\end{proof}
\subsection{Integral bound}
\begin{lem}\label{intlem}
Let $a(x)=\exp\left(\sum_{n \ge 1} \frac{\widetilde{a_n} x^n}{n} \right)=\sum a_n x^n$ be a power series satisfying \eqref{properties3} for some $\alpha, c_2 > 0$. Let $\beta>0$ be a real number such that $r=\beta/\alpha$ satisfies \eqref{properties}. Let $0\le m<n$. Define
\begin{equation}\label{defsinlem}
S_2 = \sum_{k=0}^{m} \frac{\binom{k-c_1}{k}}{\binom{n+c_1-1}{k}} \frac{\beta^{k}}{k!} \int_{0}^{\beta} \frac{(\beta-x)^{n-k}}{(n-k)!} a^{(n+1)}(x) dx.
\end{equation}
Then
\begin{equation}\label{firstpartint}
\left| S_2 \right| \ll_m \binom{n+c_2-1}{n} r^{n} (1+c_2r)(1-r)^{-c_2}.
\end{equation}
For $m=0$, the implicit constant in \eqref{firstpartint} is (at most) 12.
\end{lem}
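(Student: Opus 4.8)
The plan is to bound $S_2$ by estimating its $m+1$ summands separately. Fix $k$ with $0\le k\le m$. Since $c_1\in(0,1)$ we have $\binom{k-c_1}{k}=\prod_{i=1}^{k}(1-c_1/i)\le 1$, while $\binom{n+c_1-1}{k}\ge (n-m)^k/k!$ because each of the $k$ factors $n+c_1-j$ ($1\le j\le k$) is larger than $n-k\ge n-m\ge 1$. For the integral itself, Lemma \ref{lem3} with $i=n+1$ yields the pointwise bound $|a^{(n+1)}(x)|\le\alpha^{-(n+1)}(c_2+n)_{n+1}(1-x/\alpha)^{-c_2-n-1}$ on $[0,\beta]$, whence
\[
\left|\int_0^\beta\frac{(\beta-x)^{n-k}}{(n-k)!}\,a^{(n+1)}(x)\,dx\right|\le\frac{\alpha^{-(n+1)}(c_2+n)_{n+1}}{(n-k)!}\int_0^\beta(\beta-x)^{n-k}\Bigl(1-\tfrac{x}{\alpha}\Bigr)^{-c_2-n-1}dx.
\]

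The heart of the matter is to estimate this last integral with the correct dependence on $n$. Substituting $x=\alpha t$ turns it into $\alpha^{n-k+1}\int_0^r(r-t)^{n-k}(1-t)^{-c_2-n-1}\,dt$, and the substitution $s=(r-t)/(1-t)$ --- which maps $[0,r]$ bijectively onto itself, since $(r-t)/(1-t)$ decreases from $r$ to $0$ --- collapses this to the incomplete Beta integral $(1-r)^{-c_2-k}\int_0^r s^{n-k}(1-s)^{c_2+k-1}\,ds$. This is where a crude bound such as $(r-t)^{n-k}\le r^{n-k}(1-t)^{n-k}$ would be fatal: it gives only a constant times $r^{n-k+1}$, losing a factor of order $n$ against $(c_2+n)_{n+1}/(n-k)!$, which is comparable to $(c_2+n)\binom{n+c_2-1}{n}$. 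Instead, $c_2+k-1>0$ whenever $k\ge 1$, so $(1-s)^{c_2+k-1}\le 1$ and the Beta integral is at most $r^{n-k+1}/(n-k+1)$; in the single remaining case $k=0$, $c_2<1$, one bounds $(1-s)^{c_2-1}\le(1-r)^{c_2-1}$ and absorbs only the bounded quantity $(1-r)^{c_2-1}\le(1-\tfrac{1}{\sqrt2})^{-1}=2+\sqrt2$, using \eqref{properties}. It is exactly the gained denominator $n-k+1$ that makes the bound uniform in $n$.

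It then remains to assemble the pieces. Collecting the powers of $\alpha$ and the factor $\beta^k/k!$ into $r^{n+1}$, using $\binom{n+c_1-1}{k}^{-1}\le k!/(n-m)^k$ together with $(c_2+n)_{n+1}/(n-k+1)!\le(n+1)^k\binom{c_2+n}{n+1}$, and finally $\binom{c_2+n}{n+1}=\tfrac{c_2+n}{n+1}\binom{n+c_2-1}{n}$ with $\tfrac{(c_2+n)r}{n+1}\le 1+c_2 r$, one obtains that the $k$-th summand of $S_2$ is at most
\[
\Bigl(\tfrac{n+1}{n-m}\Bigr)^{k}(1-r)^{-k}\cdot\binom{n+c_2-1}{n}\,r^{n}\,(1+c_2 r)(1-r)^{-c_2},
\]
where the prefactor $\bigl(\tfrac{n+1}{n-m}\bigr)^{k}(1-r)^{-k}$ is $O_m(1)$ by \eqref{properties}. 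Summing over $0\le k\le m$ proves \eqref{firstpartint}. For $m=0$ only $k=0$ survives, and the above (with the extra factor $2+\sqrt2$ in the subcase $c_2<1$) reduces to $\max\{1,2+\sqrt2\}<12$ times $\binom{n+c_2-1}{n}r^{n}(1+c_2 r)(1-r)^{-c_2}$, so the implied constant is at most $12$. The one real obstacle is the integral estimate: one must recognize that the $(1-x/\alpha)^{-c_2-n-1}$ singularity coming from Lemma \ref{lem3} has to be routed through the Beta-integral substitution rather than handled crudely, so that the final estimate does not deteriorate by a factor of $n$.
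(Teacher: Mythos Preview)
Your proof is correct. The overall architecture matches the paper's---bound the binomial prefactors, invoke Lemma~\ref{lem3} for $|a^{(n+1)}|$, estimate the integral so as to gain the crucial factor $1/(n-k+1)$, then assemble---but your treatment of the integral is genuinely different. The paper substitutes $s=x/\beta$, writes the integrand as $\frac{1}{n-k}\bigl(\frac{1-s}{1-rs}\bigr)^{n-k}(1-rs)^{-c_2-k-1}$, and applies the elementary inequality $\frac{1-s}{1-rs}\le 1-(1-r)s$ to reduce to a computable polynomial integral; the resulting bound $\frac{1}{(n-k)(n-k+1)}(1-r)^{-c_2-k-2}$ then leads to a telescoping sum over $k$. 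You instead use the M\"obius-type substitution $s=(r-t)/(1-t)$ to transform the integral \emph{exactly} into an incomplete Beta integral $(1-r)^{-c_2-k}\int_0^r s^{n-k}(1-s)^{c_2+k-1}\,ds$, which you then bound termwise. Your route is arguably cleaner conceptually (it identifies the integral with a standard object rather than bounding it via an ad hoc inequality) and gives a sharper constant when $m=0$ (your $2+\sqrt{2}$ versus the paper's $(1-\tfrac{1}{\sqrt 2})^{-2}\approx 11.66$); the paper's route has the minor aesthetic advantage that the sum over $k$ telescopes. Both correctly isolate the key point you emphasize: the integral must contribute a factor $\sim 1/n$ to cancel the extra $n$ coming from $(c_2+n)_{n+1}$ versus $\binom{n+c_2-1}{n}$.
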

\begin{proof}
In \eqref{defsinlem}, we replace $\binom{k-c_1}{k}$ by the upper bound $1$ and $\binom{n+c_1-1}{k}$ by the lower bound $\binom{n-1}{k}$:
\begin{align}\label{eq:integrandref}
\left| S_2 \right| & \le \sum_{k=0}^{m} \frac{1}{\binom{n-1}{k}} \frac{\beta^{n}}{k!} \int_{0}^{\beta} \frac{(1-\frac{x}{\beta} )^{n-k}}{(n-k)!}
\left|  a^{(n+1)}(x)\right|  dx .
\end{align}
We use Lemma \ref{lem3} to replace the number $|a^{(n+1)}(x)|$ appearing in the integrand of \eqref{eq:integrandref} with $(c_2+n)_{n+1} (1-\frac{x}{\alpha})^{-c_2-n-1} \alpha^{-n-1}$:
\begin{equation}\label{eq:integ1}
\begin{split}
\left| S_2 \right|& \le \sum_{k=0}^{m} \frac{1}{\binom{n-1}{k}} \frac{\beta^n}{k!} \int_{0}^{\beta} \frac{(1-\frac{x}{\beta})^{n-k}}{(n-k)!} (c_2+n)_{n+1}\alpha^{-n-1} (1-\frac{x}{\alpha})^{-c_2-n-1} dx \\
& = \sum_{k=0}^{m} \frac{(c_2+n)_{n+1}}{\binom{n-1}{k}k!(n-k)!} \beta^n \alpha^{-n-1} \int_{0}^{\beta} (1-\frac{x}{\beta})^{n-k}(1-\frac{x}{\alpha})^{-c_2-n-1} dx.
\end{split}
\end{equation}
We simplify the expression outside the integral:
\begin{equation}\label{eq:simploutint}
\begin{split}
\frac{(c_2+n)_{n+1}}{\binom{n-1}{k}k!(n-k)!} \beta^n \alpha^{-n-1} &= \binom{c_2+n}{n+1}(n+1)n \cdot r^{n+1} \cdot \frac{\beta^{-1}}{ n-k} \\
& = \binom{n+c_2-1}{n} n\left((n+c_2)r\right) \cdot r^n  \cdot  \frac{\beta^{-1}}{ n-k}\\
& \le \binom{n+c_2-1}{n} (1+c_2r)n^2 \cdot r^n  \cdot  \frac{\beta^{-1}}{ n-k}.
\end{split}
\end{equation}
Plugging \eqref{eq:simploutint} back in \eqref{eq:integ1}, we see
\begin{equation}\label{integbound}
\begin{split}
\left| S_2 \right| & \le \binom{n+c_2-1}{n}(1+c_2r)n^2 \cdot r^{n} \sum_{k=0}^{m} \int_{0}^{\beta} \frac{1}{n-k} (1-\frac{x}{\beta})^{n-k}(1-\frac{x}{\alpha})^{-c_2-n-1} \frac{dx}{\beta}.
\end{split}
\end{equation}
We perform the change of variables $s:=x/\beta$ in the right-hand side of \eqref{integbound}, and obtain
\begin{align}\label{eq:monoint}
\left|S_2 \right| & \le \binom{n+c_2-1}{n}(1+c_2r)n^2 \cdot r^{n} \sum_{k=0}^{m} \int_{0}^{1} \frac{(1-s)^{n-k}}{n-k}(1-rs)^{-c_2-n-1} ds.
\end{align}
We rewrite the integrand $\frac{(1-s)^{n-k}}{n-k}(1-rs)^{-c_2-n-1}$ as 
\begin{equation}\label{integs3rer}
\frac{1}{n-k} \left(\frac{1-s}{1-rs}\right)^{n-k} (1-rs)^{-c_2-k-1}.
\end{equation}
We apply two basic inequalities to \eqref{integs3rer}, which hold for $s\in [0,1]$: $0 \le \frac{1-s}{1-rs} \le 1+s(r-1)$ and $(1-rs)^{-1} \le (1-r)^{-1}$. This allows us to bound the right-hand side of \eqref{eq:monoint} from above by
\begin{equation}\label{beforeevalint}
\binom{n+c_2-1}{n} (1+c_2 r)n^2 \cdot r^{n}\sum_{k=0}^{m} \frac{1}{n-k} (1-r)^{-c_2-k-1} \int_{0}^{1} (1+s(r-1))^{n-k} ds.
\end{equation}
The integral in \eqref{beforeevalint} can be evaluated precisely as $\frac{1-r^{n-k+1}}{1-r}\frac{1}{n-k+1}$, which we bound from above by $\frac{1}{(1-r)(n-k+1)}$. Thus, we obtain
\begin{align}\label{eq:befmkrep}
\left| S_2 \right| & \le \binom{n+c_2-1}{n}(1+c_2r)n^2 \cdot r^{n} \sum_{k=0}^{m} \frac{1}{(n-k)(n-k+1)} (1-r)^{-c_2-k-2}.
\end{align}
Since $k \le m$, we may replace $(1-r)^{-c_2-k-2}$ in \eqref{eq:befmkrep} by $(1-r)^{-c_2-m-2}$:
\begin{align}\label{eq:totel}
\left| S_2 \right| & \le \binom{n+c_2-1}{n} (1+c_2r) n^2 \cdot r^{n} (1-r)^{-c_2-m-2} \sum_{k=0}^{m} \frac{1}{(n-k)(n-k+1)}.
\end{align}
The sum in \eqref{eq:totel} telescopes as follows:
\begin{equation}
\begin{split}
\sum_{k=0}^{m} \frac{1}{(n-k)(n-k+1)} & = \sum_{k=0}^{m} \left(\frac{1}{n-k} - \frac{1}{n-k+1}\right) \\
&= \frac{1}{n-m}-\frac{1}{n+1} = \frac{1+m}{(n-m)(n+1)}.
\end{split}
\end{equation}
Thus, \eqref{eq:totel} becomes
\begin{equation}\label{genm}
\left| S_2 \right| \le 	\binom{n+c_2-1}{n}(1+c_2r) \frac{n(1+m)}{n-m} r^{n} (1-r)^{-c_2-m-2}.
\end{equation}
Since $\frac{n(1+m)}{n-m} \le (1+m)^2 \ll_m 1$ and $r \le 1/\sqrt{2}$ by \eqref{properties}, the bound \eqref{genm} may be simplified as
\begin{equation}\label{intbeforesimpm}
\left| S_2 \right| \ll_m \binom{n+c_2-1}{n} r^{n} (1+c_2r)(1-r)^{-c_2},
\end{equation}
where the implied constant is $(1+m)^2(1-1/\sqrt{2})^{-m-2}$. This establishes \eqref{firstpartint}. For $m=0$, the implied constant is less than $12$.
\end{proof}
\subsection{Proof of Theorem \ref{thm1}}
\begin{proof}
Recall $b_n = [x^n] (1-x/\beta)^{-c_1} = \left(-1/\beta \right)^n \binom{-c_1}{n} = \beta^{-n} \binom{n+c_1-1}{n}$. We have
\begin{equation}\label{writedownbn}
\begin{split}
f_n &= \sum_{i=0}^{n} a_i b_{n-i} =  \sum_{i=0}^{n} \frac{a^{(i)}(0)}{i!} \cdot \left(-\frac{1}{\beta}\right)^{n-i} \binom{-c_1}{n-i} \\
&= \left(-\frac{1}{\beta}\right)^n\binom{-c_1}{n} \sum_{i=0}^{n} \left(-\frac{1}{\beta}\right)^{-i} \frac{\binom{-c_1}{n-i}}{\binom{-c_1}{n}} \frac{a^{(i)}(0)}{i!} \\
&= b_n \cdot \left( \sum_{i=0}^{n} \beta^{i} (-1)^i\frac{\binom{-c_1}{n-i}}{\binom{-c_1}{n}} \frac{a^{(i)}(0)}{i!} \right).
\end{split}
\end{equation}
By Lemma \ref{lem1}, we may replace $(-1)^i\frac{\binom{-c_1}{n-i}}{\binom{-c_1}{n}}$ in \eqref{writedownbn} with $\sum_{k=0}^{i} \frac{\binom{i}{k}\binom{k-c_1}{k}}{\binom{n+c_1-1}{k}}$ and split the sum according to small and large $k$:
\begin{equation}\label{struct1}
\begin{split}
f_n &= b_n \cdot \sum_{i=0}^{n} \beta^{i} \left( \sum_{k=0}^{i} \frac{\binom{i}{k}\binom{k-c_1}{k}}{\binom{n+c_1-1}{k}} \right)\frac{a^{(i)}(0)}{ i!} \\
&=    b_n \cdot \sum_{i=0}^{n} \beta^{i} \left( \sum_{k=0}^{m} \frac{\binom{i}{k}\binom{k-c_1}{k}}{\binom{n+c_1-1}{k}} \right) \frac{a^{(i)}(0)}{i!} + b_n \cdot \sum_{i=m+1}^{n} \beta^{i} \left( \sum_{k=m+1}^{i} \frac{\binom{i}{k}\binom{k-c_1}{k}}{\binom{n+c_1-1}{k}} \right) \frac{a^{(i)}(0)}{ i!}. \end{split}
\end{equation}
We denote the first sum by $S_0$ and the second sum by $S_1$:
\begin{equation}
\begin{split}
S_0 &=\sum_{i=0}^{n} \beta^{i} \left( \sum_{k=0}^{m} \frac{\binom{i}{k}\binom{k-c_1}{k}}{\binom{n+c_1-1}{k}} \right) \frac{a^{(i)}(0)}{i!},\\
S_1 &=  \sum_{i=m+1}^{n} \beta^{i} \left( \sum_{k=m+1}^{i} \frac{\binom{i}{k}\binom{k-c_1}{k}}{\binom{n+c_1-1}{k}} \right) \frac{a^{(i)}(0)}{ i!}.
\end{split}
\end{equation} 
We rearrange the terms of $S_0$ as follows:
\begin{equation}\label{s1rear}
\begin{split}
S_0&=  \sum_{k=0}^{m} \frac{\binom{k-c_1}{k}}{\binom{n+c_1-1}{k}} \sum_{i=k}^{n} \binom{i}{k} \frac{a^{(i)}(0)}{i!}\beta^{i} \\
&= \sum_{k=0}^{m} \frac{\binom{k-c_1}{k}}{\binom{n+c_1-1}{k}} \frac{\beta^{k}}{k!} \sum_{j=0}^{n-k}\frac{a^{(j+k)}(0)}{j!}\beta^{j}.
\end{split}
\end{equation}
Let
\begin{equation}
\begin{split}
M &= \sum_{k=0}^{m} \frac{\binom{k-c_1}{k}}{\binom{n+c_1-1}{k}} \frac{\beta^{k}}{k!} a^{(k)}(\beta), \\
S_2 &= \sum_{k=0}^{m} \frac{\binom{k-c_1}{k}}{\binom{n+c_1-1}{k}} \frac{\beta^{k}}{k!} \int_{0}^{\beta} \frac{(\beta-x)^{n-k}}{(n-k)!} a^{(n+1)}(x) dx.
\end{split}
\end{equation}
By the integral formula for the remainder in a Taylor series, we may replace $\sum_{j=0}^{n-k}\frac{a^{(j+k)}(0)}{j!}\beta^{j}$ in \eqref{s1rear} with
\begin{equation*}
a^{(k)}(\beta) -  \int_{0}^{\beta} \frac{(\beta-x)^{n-k}}{(n-k)!} a^{(n+1)}(x) dx,
\end{equation*}
and obtain
\begin{equation}\label{eq:struct2}
\begin{split}
S_0 &= M-S_2.
\end{split}
\end{equation}
Summarizing \eqref{struct1} and \eqref{eq:struct2}, we see
\begin{align}\label{eq:tocontcor}
f_n &= b_n \cdot (M + S_1-S_2).
\end{align}
Note that $b_n \cdot M$ is the main term of \eqref{edef}. The expression $E$, as defined in \eqref{edef}, equals $S_1-S_2$. The sums $S_1$, $S_2$ are bounded in Lemmas \ref{sumlem} and \ref{intlem}, respectively, and these bounds give
\begin{equation}\label{eq:o_dfin1}
\begin{split}
\left| E \right| &\ll_m  \left(\frac{r}{n} \right)^{m+1} (1-r)^{-c_2} (1+c_2r)(c_2+m)_{m+1}   \\
& \quad+ \binom{n+c_2-1}{n}r^n \left(n\ln (n)+ \frac{2n}{c_1} + (1-r)^{-c_2} \left(1+c_2 r\right) \right).
\end{split}
\end{equation}
We simplify \eqref{eq:o_dfin1} using the following two inequalities:
\begin{equation}\label{eqsimpcomb1}
1+c_2 r \le \exp(c_2 r),
\end{equation}
and, since $-\ln(1-r) \le 2r$ for $0\le r \le 1/\sqrt{2}$ by simple calculus, we have
\begin{equation}\label{eqsimpcomb2}
(1-r)^{-c_2} = \exp( -c_2\ln(1-r)) \le \exp (2c_2 r).
\end{equation}
Plugging \eqref{eqsimpcomb1} and \eqref{eqsimpcomb2} into \eqref{eq:o_dfin1}, we obtain 
\begin{equation}\label{finalthm1}
\begin{split}
\left| E \right| & \ll_m  \left(\frac{r}{n} \right)^{m+1} \exp(3 c_2 r) (c_2+m)_{m+1} \\
& \qquad + \binom{n+c_2-1}{n}r^n \left(n\ln (n)+ \frac{2n}{c_1} + \exp(3c_2 r) \right).
\end{split}
\end{equation}
We note that
\begin{equation}\label{lastsimpli}
n\ln (n)+ \frac{2n}{c_1} + \exp(3c_2 r)  \le \exp(3c_2r) \frac{4n^2}{c_1}.
\end{equation}
Plugging \eqref{lastsimpli} in\eqref{finalthm1}, we obtain
\begin{equation}\label{finalfinalthm1}
\begin{split}
\left| E \right| & \ll_m  \exp(3 c_2 r) \left( \left(\frac{r}{n} \right)^{m+1}  (c_2+m)_{m+1} + \binom{n+c_2-1}{n} \frac{4n^2}{c_1} r^n  \right),
\end{split}
\end{equation}
as needed. For $m=0$, the implied coefficient is the maximum of the two implied constants of Lemmas \ref{sumlem} and \ref{intlem}, i.e. 24.
\end{proof}
\subsection{Proof of Corollary \ref{nicer}}
We apply Theorem \ref{thm1} with $m=0$, and obtain
\begin{equation}\label{twoerrorcor}
\begin{split}
\left| E \right| & \ll  \exp(3 c_2 r) \left( \frac{c_2r}{n}+ \binom{n+c_2-1}{n} \frac{4n^2}{c_1} r^n  \right),
\end{split}
\end{equation}
where the implied constant is 24. We find a range of $n$ where the first term of the right-hand side of \eqref{twoerrorcor} dominates the error, i.e. we want to solve
\begin{equation}\label{dominanteineq}
\frac{c_2r}{n} \ge  \binom{n+c_2-1}{n} \frac{4n^2}{c_1} r^n .
\end{equation}
For $n$ for which \eqref{dominanteineq} holds, \eqref{expcor} holds with an  implied constant (at most) $24+24=48$. We find a range of $n$ for which \eqref{dominanteineq} holds by making several simplifications. Note that
\begin{equation}
\begin{split}
\binom{n+c_2-1}{n}n^2 &= \binom{n+c_2-1}{n-1}c_2 n  = \prod_{i=1}^{n-1} \left(1 + \frac{c_2}{i} \right) c_2 n\\
& \le  \exp \left( c_2 \sum_{i=1}^{n-1} \frac{1}{i} \right) c_2 n  \le \exp \left( c_2 \left( \ln(n) + 1 \right) \right) c_2n \\
& = (ne)^{c_2+1} \frac{c_2}{e},
\end{split}
\end{equation}
so we may replace \eqref{dominanteineq} by the following stricter inequality:
\begin{equation}
\frac{r}{n} \ge (ne)^{c_2+1} \frac{1}{c_1} \frac{4}{e}r^n, 
\end{equation}
or, equivalently, 
\begin{equation}\label{toreplacebytwo}
\left(\frac{1}{r} \right)^{n-1} \ge \left( ne \right)^{c_2+2} \frac{1}{c_1} \frac{4}{e^2}.
\end{equation}
Inequality \eqref{toreplacebytwo} holds when the following two inequalities hold simultaneously: 
\begin{equation}\label{replacetwo}
\left(\frac{1}{r} \right)^{(n-1)/2 } \ge \left( ne\right)^{c_2+2}, \quad \left(\frac{1}{r} \right)^{(n-1)/2 } \ge \frac{1}{c_1}.
\end{equation}
The first inequality of \eqref{replacetwo} holds whenever $\frac{n-1}{\ln(n)+1} \ge \frac{2c_2+4}{\ln(\frac{1}{r})}$, which, by Lemma \ref{lemmaforcor}, is satisfied when
\begin{equation}
n \ge 1+5\left(\frac{2c_2+4}{\ln(\frac{1}{r})} + 1\right) \ln \left(\frac{2c_2+4}{\ln(\frac{1}{r})} + 1\right).
\end{equation}
The second inequality of \eqref{replacetwo} holds whenever $n \ge 2\frac{\ln( c_1^{-1})}{\ln(\frac{1}{r})} + 1$. 
This establishes Corollary \ref{nicer}.

\section*{Acknowledgments}
I thank my advisor, Lior Bary-Soroker, for introducing me to Landau's theorem, for helpful discussions and for teaching me, patiently, how to write. Also, I thank Amotz Oppenheim and Ze\'ev Rudnick for their useful comments on this paper, Mikhail Sodin for a helpful discussion related to the analytic part of the paper, and the anonymous referee for her/his valuable remarks. This research was partially supported by the Israel Science Foundation grants no. 952/14 and no. 382/15.
\bibliographystyle{alpha}
\bibliography{references}

\begin{thebibliography}{BSSW16}

\bibitem[Art24]{artin}
E.~Artin.
\newblock Quadratische {K}\"orper im {G}ebiete der h\"oheren {K}ongruenzen.
  {I}.
\newblock {\em Math. Z.}, 19(1):153--206, 1924.

\bibitem[BJ12]{bae}
Sunghan Bae and Hwanyup Jung.
\newblock On the 4-rank of ideal class groups of quadratic function fields.
\newblock {\em Acta Arith.}, 151(4):325--360, 2012.

\bibitem[BSSW16]{lior}
Lior Bary-Soroker, Yotam Smilansky, and Adva Wolf.
\newblock On the function field analogue of {L}andau's theorem on sums of
  squares.
\newblock {\em Finite Fields Appl.}, 39:195--215, 2016.

\bibitem[Che51]{chev}
Claude Chevalley.
\newblock {\em Introduction to the {T}heory of {A}lgebraic {F}unctions of {O}ne
  {V}ariable}.
\newblock Mathematical Surveys, No. VI. American Mathematical Society, New
  York, N. Y., 1951.

\bibitem[CKY15]{3proc}
Chih-Yun Chuang, Yen-Liang Kuan, and Jing Yu.
\newblock On counting polynomials over finite fields.
\newblock {\em Proc. Amer. Math. Soc.}, 143(10):4305--4316, 2015.

\bibitem[FK10]{fouvry}
\'Etienne Fouvry and J\"urgen Kl\"uners.
\newblock On the negative {P}ell equation.
\newblock {\em Ann. of Math. (2)}, 172(3):2035--2104, 2010.

\bibitem[FS09]{flaj}
Philippe Flajolet and Robert Sedgewick.
\newblock {\em Analytic combinatorics}.
\newblock Cambridge University Press, Cambridge, 2009.

\bibitem[FV96]{flajvardi}
Philippe Flajolet and Ilan Vardi.
\newblock Zeta function expansions of classical constants.
\newblock {\em Unpublished manuscript,
  \url{http://algo.inria.fr/flajolet/Publications/landau.ps}}, 1996.

\bibitem[Hen77]{henrici}
Peter Henrici.
\newblock {\em Applied and computational complex analysis. {V}ol. 2}.
\newblock Wiley Interscience [John Wiley \& Sons], New York-London-Sydney,
  1977.
\newblock Special functions---integral transforms---asymptotics---continued
  fractions.

\bibitem[Jam13]{gamma}
G.~J.~O. Jameson.
\newblock Inequalities for gamma function ratios.
\newblock {\em Amer. Math. Monthly}, 120(10):936--940, 2013.

\bibitem[Kno90]{knop1}
John Knopfmacher.
\newblock {\em Abstract analytic number theory}.
\newblock Dover Books on Advanced Mathematics. Dover Publications, Inc., New
  York, second edition, 1990.

\bibitem[KW89]{knuth}
Donald~E. Knuth and Herbert~S. Wilf.
\newblock A short proof of {D}arboux's lemma.
\newblock {\em Appl. Math. Lett.}, 2(2):139--140, 1989.

\bibitem[KZ01]{knop2}
John Knopfmacher and Wen-Bin Zhang.
\newblock {\em Number theory arising from finite fields}, volume 241 of {\em
  Monographs and Textbooks in Pure and Applied Mathematics}.
\newblock Marcel Dekker, Inc., New York, 2001.
\newblock Analytic and probabilistic theory.

\bibitem[Lan08]{land}
Edmund Landau.
\newblock {\"U}ber die einteilung der positiven ganzen zahlen in vier klassen
  nach der mindestzahl der zu ihrer additiven zusammensetzung erforderlichen
  quadrate.
\newblock {\em Arch. Math. Phys.}, 13:305--312, 1908.

\bibitem[Lea67]{leahey}
William Leahey.
\newblock Sums of squares of polynomials with coefficients in a finite field.
\newblock {\em Amer. Math. Monthly}, 74:816--819, 1967.

\bibitem[Mat17]{matei}
Vlad Matei.
\newblock A geometric perspective on landau's problem over function fields.
\newblock {\em Online preprint,
  \url{http://www.math.wisc.edu/~mvlad/landau.pdf}}, 2017.

\bibitem[MS93]{1mod}
E.~Manstavi{\v{c}}ius and R.~Skrabut{\.e}nas.
\newblock Summation of values of multiplicative functions on semigroups.
\newblock {\em Lithuanian Mathematical Journal}, 33(3):255--264, 1993.

\bibitem[Ros02]{rosen}
Michael Rosen.
\newblock {\em Number theory in function fields}, volume 210 of {\em Graduate
  Texts in Mathematics}.
\newblock Springer-Verlag, New York, 2002.

\bibitem[Sha64]{shanks}
Daniel Shanks.
\newblock The second-order term in the asymptotic expansion of {$B(x)$}.
\newblock {\em Math. Comp.}, 18:75--86, 1964.

\bibitem[Shi00]{shiu}
D.~K.~L. Shiu.
\newblock Strings of congruent primes.
\newblock {\em J. London Math. Soc. (2)}, 61(2):359--373, 2000.

\bibitem[Ste93]{steven}
Peter Stevenhagen.
\newblock The number of real quadratic fields having units of negative norm.
\newblock {\em Experiment. Math.}, 2(2):121--136, 1993.

\bibitem[Tho08]{thorne}
Frank Thorne.
\newblock Irregularities in the distributions of primes in function fields.
\newblock {\em J. Number Theory}, 128(6):1784--1794, 2008.

\bibitem[Wan97]{wan}
Daqing Wan.
\newblock Generators and irreducible polynomials over finite fields.
\newblock {\em Math. Comp.}, 66(219):1195--1212, 1997.

\bibitem[War92]{warlimont}
Richard Warlimont.
\newblock Arithmetical semigroups. {V}. {M}ultiplicative functions.
\newblock {\em Manuscripta Math.}, 77(4):361--383, 1992.

\end{thebibliography}

Raymond and Beverly Sackler School of Mathematical Sciences, Tel Aviv University,
P. O. Box 39040, Tel Aviv 6997801, Israel.
E-mail address: ofir.goro@gmail.com

\end{document}